\newtheorem{theorem}{Theorem}[section]
\newtheorem{lemma}[theorem]{Lemma}
\newtheorem{proposition}[theorem]{Proposition}
\newtheorem{corollary}[theorem]{Corollary}
\newtheorem{definition}[theorem]{Definition}
\theoremstyle{definition}
\newtheorem{remark}[theorem]{Remark}
\newtheorem{example}[theorem]{Example}
\newtheorem{question}[theorem]{Question}
\newcommand\id{\mathop{\rm id}}
\newcommand\nph{\varphi}
\newcommand\K{\mathcal{K}}
\newcommand\oss{{\mathcal{S}}}
\newcommand\ost{{\mathcal{T}}}
\newcommand\omin{\otimes_{\rm min}}
\newcommand\omax{\otimes_{\rm max}}
\newcommand\oc{\otimes_{\rm c}}
\newcommand\comm{\mathop{\rm c}}
\newcommand\coisubset{\subseteq_{\rm coi}}   %%%%% coi inclusion
\newcommand\cstar{{\rm C}^*}                              %%% C$^*$-algebra generated by
\newcommand{\cl}[1]{\mathcal{#1}}
\newcommand{\bb}[1]{\mathbb{#1}}
\begin{document}

\title{Characterisations of the weak expectation property}

\author[D.~Farenick]{Douglas Farenick}
\address{Department of Mathematics and Statistics, University of Regina,
Regina, Saskatchewan S4S 0A2, Canada}
\email{douglas.farenick@uregina.ca}

\author[A.~S.~Kavruk]{Ali S.~Kavruk}
\address{Department of Mathematics, University of Illinois, Urbana, IL 61801, U.S.A.}
\email{kavruk@illinois.edu}

\author[V.~I.~Paulsen]{Vern I.~Paulsen}
\address{Department of Mathematics, University of Houston,
Houston, Texas 77204-3476, U.S.A.}
\email{vern@math.uh.edu}

\author[I.~G.~Todorov]{Ivan G.~Todorov}
\address{Pure Mathematics Research Centre, Queen's University Belfast, Belfast BT7 1NN, United Kingdom}
\email{i.todorov@qub.ac.uk}

\thanks{This work supported in part by NSERC (Canada), NSF (USA), and the Royal Society (United Kingdom)}
\keywords{operator system, tensor product, discrete group, free group, free product, Kirchberg's Conjecture, complete quotient maps, lifting property}
\subjclass[2010]{Primary 46L06, 46L07; Secondary 46L05, 47L25, 47L90}

\date{25 June 2013}

%\marginpar{\tiny change in abstract}

\begin{abstract}
We use representations of operator systems as quotients 
to deduce various characterisations of the weak expectation property (WEP) for C$^*$-algebras.  By Kirchberg's work on WEP, 
these results give new formulations of Connes' embedding problem. 
\end{abstract}
\maketitle

\section{Introduction} 

In this paper
we 
%use representations of operator systems as quotients to 
deduce various characterisations of Lance's weak expectation property (WEP) for C*-algebras \cite{L}. 
Lance's original definition of WEP
requires that {\em every} faithful representation of the C*-algebra possesses a so-called  weak expectation or, equivalently, that the universal representation, 
which is somewhat cumbersome, possesses a weak expectation. Given a unital C*-algebra $\cl A$ and a faithful unital representation $\pi: \cl A \to B(\cl H),$ 
then a weak expectation is a completely positive map $\phi: B(\cl H) \to \pi(\cl A)^{\prime \prime}$ such that $\phi(\pi(a)) = \pi(a)$ for every $a \in \cl A.$

One advantage of our results is that they give new characterisations of WEP in terms of a fixed given representation. Thus, one is free to choose a 
preferred faithful representation of the C*-algebra to attempt to determine if it has WEP.
These results expand on earlier work of the first three authors \cite{farenick--kavruk--paulsen2011} and of the second author \cite{kavruk2012} that 
also obtained such representation-free characterisations of WEP.

One major motivation for the desire to obtain such a plethora of characterisations of WEP are the results of Kirchberg, who
proved that Connes' embedding conjecture is equivalent to determining if certain C*-algebras have WEP. Thus, a wealth of characterisations of 
WEP could help to resolve this conjecture.

Our technique is to first characterise WEP in terms of operator system tensor products with certain ``universal'' finite dimensional operator systems. 
These universal operator systems have no completely order isomorphic representations on finite dimensional spaces, but we then realise them 
as quotients of finite dimensional operator subsystems of matrix algebras. 
This leads to characterisations of the C*-algebras possessing  WEP 
as the C*-algebras for which 
these quotient maps remain quotient after tensoring with the algebra
(see Theorem \ref{p_eqmma} for a precise formulation). 
Thus, WEP is realised as an ``exactness'' property for these operator system quotients or, equivalently, 
as a ``lifting'' property from a quotient. 
Because the liftings lie in finite dimensional matrix algebras, the question 
of the existence or non-existence of liftings can be reduced to a question about the existence of liftings satisfying 
elementary linear constraints.

As in the work of the second author, many of these characterisations of WEP reduce to interpolation or decomposition 
properties of the C*-algebra of the type studied by F. Riesz in other contexts.
In 
ordered function space theory, or in general, in ordered topological lattice theory, 
the vast use of Riesz interpolation
and decomposition properties dates back to F. Riesz's studies in the late '30's \cite{Riesz}. 
The reader may refer to
\cite{CD} and the bibliography therein for broad applications of this concept. We also
refer to \cite{kavruk2012} for a non-commutative Riesz interpolation property that characterises WEP.  In this paper 
we give a characterisation of WEP in terms of a Riesz decomposition property.

\section{Operator System Preliminaries}\label{s_prel}

In this section, we introduce basic terminology and notation, and recall previous constructions and results
that will be needed in the sequel.
If $V$ is a vector space, we let $M_{n,m}(V)$ be the space of all $n$ by $m$ matrices with entries in $V$.
We set $M_n(V) = M_{n,n}(V)$ and $M_n = M_n(\mathbb{C})$.
We let $(E_{i,j})_{i,j}$ be the canonical matrix unit system in $M_n$.
For a map $\phi : V\to W$ between vector spaces, we let $\phi^{(n)} : M_n(V)\to M_n(W)$
be the $n$th ampliation of $\phi$ given by $\phi^{(n)}((x_{i,j})_{i,j}) = (\phi(x_{i,j}))_{i,j}$.
For a Hilbert space $\cl H$, we denote by $\cl B(\cl H)$ the
algebra of all bounded linear operators on $\cl H$.
An \emph{operator system} is a subspace $\cl S$ of
$\cl B(\cl H)$ for some Hilbert space $\cl H$ which contains the identity operator $I$ and is closed
under taking adjoints. The embedding of $M_n(\cl S)$
into $\cl B(\cl H^n)$ gives rise to the cone $M_n(\cl S)_+$ of all positive operators in $M_n(\cl S)$. The
family $(M_n(\cl S)_+)_{n\in \mathbb{N}}$ of cones is called the \emph{operator system structure} of $\cl S$.
Every complex $*$-vector space equipped with a family of matricial cones and an order unit satisfying natural axioms can, by virtue
of the Choi-Effros Theorem \cite{choi--effros1977}, be represented faithfully as an operator system acting on some Hilbert space.
When a particular embedding is not specified, the order unit of an operator system will be denoted by $1$.
A map $\phi : \cl S\to \cl T$ between operator systems is called \emph{completely positive} if $\phi^{(n)}$
positive, that is, $\phi^{(n)}(M_n(\cl S)_+)\subseteq M_n(\cl T)_+$, for every $n\in \mathbb{N}$.
A linear bijection $\phi : \cl S\to \cl T$ of operator systems $\oss$ and $\ost$ is a \emph{complete order isomorphism} if both $\phi$ and $\phi^{-1}$
are completely positive.
We refer the reader to
\cite{PaulsenBook} for further properties of operator systems and completely positive maps.

An \emph{operator system tensor product} $\oss\otimes_\tau\ost$ of operator systems $\oss$ and $\ost$
is an operator system structure on the algebraic tensor product $\oss\otimes\ost$ satisfying a set of natural axioms.
We refer the reader to \cite{kavruk--paulsen--todorov--tomforde2011}, where a detailed study of such tensor products
was undertaken.
Suppose that $\oss_1\subseteq\ost_1$ and $\oss_2\subseteq\ost_2$ are inclusions of operator systems. Let $\iota_j:\oss_j\rightarrow\ost_j$ denote the
inclusion maps $\iota_j(x_j)=x_j$ for $x_j\in\oss_j$, $j = 1,2$, so that the map
$\iota_1\otimes\iota_2:\oss_1\otimes\oss_2\rightarrow\ost_1\otimes\ost_2$ is a linear
inclusion of vector spaces. If $\tau$ and $\sigma$ are operator system structures on $\oss_1\otimes\oss_2$ and $\ost_1\otimes\ost_2$ respectively,
then we use the notation
\[
\oss_1\otimes_\tau\oss_2\,\subseteq_+\,\ost_1\otimes_\sigma\ost_2
\]
to express the fact that $\iota_1\otimes\iota_2:\oss_1\otimes_\tau\oss_2\rightarrow\ost_1\otimes_\sigma\ost_2$ is a
(unital) completely positive map. This notation is motivated by the fact that $\iota_1 \otimes \iota_2$ is a
completely positive map if and only if, for every $n,$ the cone $M_n(\oss_1\otimes_\tau\oss_2)_+$ is
contained in the cone $M_n(\ost_1\otimes_\sigma \ost_2)_+$.
If, in addition, $\iota_1\otimes\iota_2$ is a complete order isomorphism onto its range, then we write
\[
\oss_1\otimes_\tau\oss_2\coisubset\ost_1\otimes_\sigma\ost_2\,.
\]
In particular, if $\tau$ and $\sigma$ are two operator system structures on $\oss\otimes\ost$, then
\[
\oss\otimes_\tau\ost\,=\,\oss\otimes_\sigma\ost
\quad\mbox{means}\quad
\oss\otimes_\tau\ost\coisubset\oss\otimes_\sigma\ost \quad\mbox{and}\quad
\oss\otimes_\sigma\ost\coisubset\oss\otimes_\tau\ost \,.
\]

When $\oss_1\otimes_\tau\oss_2 \subseteq_+\oss_1\otimes_\sigma \oss_2,$ then we will also write $\tau \ge \sigma$
and say that $\tau$ majorises $\sigma$.

In the sequel, we will use extensively the following operator system tensor products
introduced in \cite{kavruk--paulsen--todorov--tomforde2011}:

\smallskip

(a) \emph{The minimal tensor product $\min$.} If $\cl S\subseteq \cl B(\cl H)$ and $\cl T\subseteq \cl B(\cl K)$, where
$\cl H$ and $\cl K$ are Hilbert spaces, then $\cl S\otimes_{\min}\cl T$ is the operator system arising from the
natural inclusion of $\cl S\otimes\cl T$ into $\cl B(\cl H\otimes\cl K)$.

\smallskip

(b) \emph{The maximal tensor product $\max$.} For each $n\in \mathbb{N}$, let
$D_n = \{A^*(P\otimes Q)A : A\in M_{n,km}(\mathbb{C}), P\in M_k(\cl S)_+, Q\in M_m(\cl T)_+\}$.
The Archimedanisation \cite{paulsen--todorov--tomforde2011} of the family $(D_n)_{n\in \mathbb{N}}$
of cones is an operator system structure on $\cl S\otimes\cl T$; the corresponding operator system
is denoted by $\cl S\otimes_{\max}\cl T$.

\smallskip

(c) \emph{The commuting tensor product ${\rm c}$.} By definition, an element $X\in M_n(\cl S\otimes\cl T)$
belongs to the positive cone $M_n(\cl S\oc\cl T)_+$ if
$(\phi\cdot\psi)^{(n)}(X)$ is a positive operator for all completely positive maps $\phi : \cl S\to \cl B(\cl H)$
and $\psi : \cl T\to \cl B(\cl H)$ with commuting ranges. Here, the linear map
$\phi\cdot\psi : \cl S\otimes \cl T\to \cl B(\cl H)$
is given by $\phi\cdot\psi(x\otimes y) = \phi(x)\psi(y)$, $x\in \cl S$, $y\in \cl T$.

\smallskip

The tensor products min, c, and max are functorial in the sense that
if $\tau$ denotes any of them, and $\phi : \cl S_1\to \cl S_2$ and $\psi : \cl T_1\to \cl T_2$ are
completely positive maps, then the tensor product map
$\phi\otimes\psi : \cl S_1\otimes_{\tau}\cl T_1\to \cl S_2\otimes_{\tau}\cl T_2$ is completely positive.
We will use repeatedly the following fact, extablished in \cite{kavruk--paulsen--todorov--tomforde2011}:
If $\cl S$ is an operator system and $\cl A$ is a C*-algebra, then 
$\cl S\otimes_{\rm c} \cl A = \cl S\otimes_{\max}\cl A$. 
%\marginpar{\tiny sentence added}

The three tensor products
mentioned above satisfy the relations
\[
\oss\omax\ost\,\subseteq_+\,\oss\oc\ost\, \subseteq_+ \oss\omin\ost
\]
for all operator systems $\oss$ and $\ost$.

For every operator system $\cl S$, we denote by $\cl S^d$ the (normed space) dual of $\cl S$.
The space $M_n(\cl S^d)$ can be naturally identified with a subspace of the space
$\cl L(\cl S,M_n)$ of all linear maps from $\cl S$ into $M_n$. 
Taking the pre-image of the cone of all completely positive maps in $\cl L(\cl S,M_n)$, we obtain a family $(M_n(\cl S^d)_+)_{n\in \bb{N}}$
of matricial cones on $\cl S^d$. 
We have, in particular, that $(\cl S^d)_+$ consists of all positive functionals on $\cl S$; the elements $\phi\in (\cl S^d)_+$ with $\phi(1) = 1$
are called \emph{states} of $\cl S$. An important case arises when $\cl S$ is finite dimensional; in this case, $\cl S^d$ is an operator system
when equipped with the family of matricial cones just described and has an order unit 
given by any faithful state on $\cl S$ \cite[Corollary 4.5]{choi--effros1977}.

We now move to the notion of quotients in the operator system category.

\begin{definition} A linear subspace $\cl J\subseteq \oss$ of an operator system $\oss$ is called a \emph{kernel} if there is an operator system $\ost$
and a completely positive linear map $\phi:\oss\rightarrow\ost$ such that $\cl J=\ker\phi$.
\end{definition}

If $\cl J\subseteq\oss$ is kernel, then one may endow the $*$-vector space $\oss/\cl J$ with an operator system structure
such that the canonical quotient map $q_{\cl  J}:\oss\rightarrow\oss/\cl J$ is unital and completely positive
\cite{kavruk--paulsen--todorov--tomforde2010}.  An element $(x_{i,j} + \cl J)$ is positive in $M_n(\oss/\cl J)$ if and only if for every $\epsilon >0,$ there exist elements $y_{i,j} \in \cl J$ such that $(x_{i,j} + y_{i,j}) + \epsilon 1_n \in M_n(\oss)_+.$ 
Moreover, if $\cl J\subseteq\ker\phi$ for some completely positive map $\phi:\oss\rightarrow\ost$,
then there exists a completely positive map $\dot{\phi}:\oss/\cl J\rightarrow\ost$
such that $\phi=\dot{\phi}\circ q_\cl J$.
A \emph{null subspace} of $\cl S$ 
\cite{kavruk2011} is a subspace $\cl J$ which does not contain 
positive elements other than $0$. It was shown in \cite{kavruk2011} that 
every null subspace is a kernel.

\begin{definition} A unital completely positive map $\phi:\oss\rightarrow\ost$ is called a
\emph{complete quotient map} if the natural quotient map $\dot{\phi}:\oss/\ker\phi\rightarrow\ost$
is a complete order isomorphism.
\end{definition}

\begin{definition} Given an operator system $\ost$ an element $(t_{i,j}) \in M_n(\ost)$ will be called \emph{strongly positive} if there exists $\epsilon > 0$ such that $(t_{i,j}) - \epsilon 1_n \in M_n(\ost)_+.$ 
\end{definition}
Thus, an element of a C*-algebra is strongly positive if and only if it is positive and invertible and an element of an operator system is strongly positive if and only if its image under every unital completely positive map into a C*-algebra is positive and invertible.

We will write $(t_{i,j}) \gg 0$ to denote that $(t_{i,j})$ is strongly positive. Given two self-adjoint elements $(x_{i,j})$ and $(y_{i,j})$ we will write $(x_{i,j}) \gg (y_{i,j})$ or $(y_{i,j}) \ll (x_{i,j})$ to indicate that $(x_{i,j} - y_{i,j})$ is strongly positive.

The concept of strongly positive element leads to the following useful characterisation of complete quotient maps \cite[Proposition 3.2]{farenick--kavruk--paulsen2011}.
 
\begin{proposition}\label{cqchar} Let $\oss$ and $\ost$ be operator systems and let $\phi: \oss \to \ost$
be a unital completely positive surjection. Then $\phi$ is a complete quotient map if and only if for every $n$ every strongly positive element of $M_n(\ost)_+$ has a strongly positive pre-image.
\end{proposition}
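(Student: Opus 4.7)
The proof splits naturally into the two implications of the biconditional, and in each the key technical tool is the explicit description of positivity in the quotient $\oss/\cl J$ recalled just before the proposition: $(x_{i,j}+\cl J)\in M_n(\oss/\cl J)_+$ iff for every $\epsilon>0$ there exist $y_{i,j}\in\cl J$ with $(x_{i,j}+y_{i,j})+\epsilon 1_n\in M_n(\oss)_+$.

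For the forward implication, I assume $\phi$ is a complete quotient map, so $\dot\phi:\oss/\ker\phi\to\ost$ is a complete order isomorphism. Let $(t_{i,j})\in M_n(\ost)$ be strongly positive, so $(t_{i,j})-\epsilon 1_n\in M_n(\ost)_+$ for some $\epsilon>0$. Since $\dot\phi^{-1}$ is completely positive, $\dot\phi^{-1}((t_{i,j}))-\epsilon 1_n\in M_n(\oss/\ker\phi)_+$; write this coset as $(x_{i,j}+\ker\phi)$ for some lift $(x_{i,j})\in M_n(\oss)$. Applying the quotient positivity characterisation with tolerance $\epsilon/2$ yields $y_{i,j}\in\ker\phi$ with $(x_{i,j}+y_{i,j})+(\epsilon/2)1_n\in M_n(\oss)_+$. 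Setting $w_{i,j}=x_{i,j}+y_{i,j}+(\epsilon/2)\delta_{i,j}1$ (so we add back the $\epsilon 1_n$ that was subtracted from $(t_{i,j})$) gives $(w_{i,j})\geq (\epsilon/2) 1_n$ with $\phi^{(n)}((w_{i,j}))=(t_{i,j})$, which is the desired strongly positive preimage.

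For the converse, I assume the lifting property and must show $\dot\phi^{-1}$ is completely positive. Fix $(t_{i,j})\in M_n(\ost)_+$ and pick any preimage $(x_{i,j})\in M_n(\oss)$ under $\phi^{(n)}$; I want to verify the quotient positivity condition for $(x_{i,j}+\ker\phi)$. Given $\epsilon>0$, note that $(t_{i,j})+(\epsilon/2)1_n$ is strongly positive in $M_n(\ost)$, so by hypothesis there is a strongly positive $(w_{i,j})\in M_n(\oss)$ with $\phi^{(n)}((w_{i,j}))=(t_{i,j})+(\epsilon/2)1_n$. In particular $(w_{i,j})\in M_n(\oss)_+$. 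Put $y_{i,j}:=w_{i,j}-x_{i,j}-(\epsilon/2)\delta_{i,j}1$; since $\ker\phi^{(n)}=M_n(\ker\phi)$, one checks $y_{i,j}\in\ker\phi$, and $(x_{i,j}+y_{i,j})+\epsilon 1_n=(w_{i,j})+(\epsilon/2)1_n\in M_n(\oss)_+$. This verifies positivity of $(x_{i,j}+\ker\phi)$ in the quotient, so $\dot\phi^{-1}$ is completely positive and $\phi$ is a complete quotient map.

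There is no serious obstacle: both directions amount to bookkeeping with the quotient positivity condition, and the only subtlety is choosing the right $\epsilon$-perturbation so that the positive element produced by the hypothesis (or by the quotient description) differs from the chosen lift by an element of $M_n(\ker\phi)$. The identification $\ker\phi^{(n)}=M_n(\ker\phi)$ is used implicitly; this follows immediately from the entrywise formula for ampliations.
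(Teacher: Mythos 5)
The paper itself gives no proof of this proposition --- it is quoted verbatim from \cite[Proposition 3.2]{farenick--kavruk--paulsen2011} --- so there is no in-text argument to compare against; your proof is correct and is the natural one, reducing both implications to bookkeeping with the $\epsilon$-perturbed description of positivity in $M_n(\oss/\ker\phi)$. The one slip is in the forward direction: since $\phi^{(n)}((x_{i,j}))=(t_{i,j})-\epsilon 1_n$, the lift must be $w_{i,j}=x_{i,j}+y_{i,j}+\epsilon\,\delta_{i,j}1$ (adding back the full $\epsilon 1_n$, as your own parenthetical indicates), not $(\epsilon/2)\delta_{i,j}1$; with the coefficient as written you would get $\phi^{(n)}((w_{i,j}))=(t_{i,j})-(\epsilon/2)1_n$ rather than $(t_{i,j})$. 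Replacing $\epsilon/2$ by $\epsilon$ in that formula yields $(w_{i,j})=\bigl[(x_{i,j}+y_{i,j})+(\epsilon/2)1_n\bigr]+(\epsilon/2)1_n\geq(\epsilon/2)1_n$ with $\phi^{(n)}((w_{i,j}))=(t_{i,j})$, exactly the conclusion you state, so this is a typo rather than a gap. The converse direction, including the identification $\ker\phi^{(n)}=M_n(\ker\phi)$, is correct as written.
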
 

We will frequently use the following result \cite[Lemma 5.1]{farenick--paulsen2011}.

\begin{lemma}\label{l_fp}
Let $\cl R,\cl S, \cl T$ and $\cl U$ be operator systems and assume that
we are given linear maps
$\psi : \cl R \to \cl S$, $\theta : \cl S\to \cl T$,
$\mu : \cl R\to \cl U$ and $\nu : \cl U\to \cl T$, such that 
$\nu$ is a complete quotient map, $\mu$ is a complete order isomorphism, 
$\theta$ is a linear isomorphism, $\theta^{-1}$ is completely positive and 
$\theta\circ \psi = \nu\circ \mu$. 
Then $\psi$ is a complete quotient map if and only if $\theta$ is 
a complete order isomorphism. 
\end{lemma}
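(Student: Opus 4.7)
The plan is to rewrite the hypothesis as $\psi = \theta^{-1}\circ\nu\circ\mu$ (using that $\theta$ is a bijection) and then treat the two implications separately. The backward direction is essentially formal: if $\theta$ is a complete order isomorphism, so is $\theta^{-1}$, and the identity $\psi = \theta^{-1}\circ\nu\circ\mu$ presents $\psi$ as a complete quotient map sandwiched by complete order isomorphisms, so $\ker\psi = \mu^{-1}(\ker\nu)$ and the canonical induced map $\cl R/\ker\psi \to \cl S$ decomposes as a chain of complete order isomorphisms $\cl R/\mu^{-1}(\ker\nu)\cong\cl U/\ker\nu\cong\cl T\cong\cl S$; hence $\psi$ itself is a complete quotient map.

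For the forward direction, assume $\psi$ is a complete quotient map. Because $\theta$ is already a linear bijection with completely positive inverse, the only thing still required is that $\theta$ itself be completely positive. This is where Proposition~\ref{cqchar} does the real work. Fix $n\in\bb N$ and $[s_{ij}]\in M_n(\cl S)_+$. For each $\epsilon>0$ the element $[s_{ij}]+\epsilon 1_n$ is strongly positive in $M_n(\cl S)$, so by Proposition~\ref{cqchar} applied to $\psi$ there exists a strongly positive $[r_{ij}]\in M_n(\cl R)$ with $\psi^{(n)}([r_{ij}])=[s_{ij}]+\epsilon 1_n$. Applying $\theta^{(n)}$ and using $\theta\circ\psi=\nu\circ\mu$ together with the complete positivity of $\nu$ and $\mu$ yields
\[
\theta^{(n)}([s_{ij}]) + \epsilon\,\theta^{(n)}(1_n) \;=\; (\nu\circ\mu)^{(n)}([r_{ij}]) \;\in\; M_n(\cl T)_+.
\]
Since $\theta^{-1}$ is completely positive it is self-adjoint, hence so is $\theta$; in particular $\theta^{(n)}(1_n)$ is self-adjoint in $M_n(\cl T)$ and is dominated by $M\cdot 1_n$ for some $M>0$. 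The decomposition $\theta^{(n)}([s_{ij}])+\epsilon M\cdot 1_n = \bigl(\theta^{(n)}([s_{ij}])+\epsilon\theta^{(n)}(1_n)\bigr)+\epsilon\bigl(M\cdot 1_n-\theta^{(n)}(1_n)\bigr)$ then shows that $\theta^{(n)}([s_{ij}])+\delta\cdot 1_n\in M_n(\cl T)_+$ for every $\delta>0$, and the Archimedean property of $M_n(\cl T)_+$ yields $\theta^{(n)}([s_{ij}])\in M_n(\cl T)_+$.

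The main obstacle, and really the only nontrivial step, is establishing complete positivity of $\theta$ in the forward direction; Proposition~\ref{cqchar} is what unlocks it, by providing strongly positive preimages of strongly positive elements and so allowing the algebraic identity $\theta\circ\psi=\nu\circ\mu$ to be converted into an order-theoretic statement about $\theta$ alone. The remaining ingredients—self-adjointness of $\theta$ inherited from its completely positive inverse, and the Archimedean closure of the matrix cones of an operator system—are standard features of the category.
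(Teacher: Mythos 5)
Your proof is correct. Note first that the paper does not actually prove this lemma; it is quoted from \cite[Lemma 5.1]{farenick--paulsen2011}, where the argument is the ``soft'' diagram chase: since $\theta$ is injective, $\ker\psi=\ker(\nu\circ\mu)$, and $\nu\circ\mu$ is a complete quotient map, so its induced map $\dot{(\nu\circ\mu)}:\cl R/\ker\psi\to\cl T$ is a complete order isomorphism; the identity $\theta\circ\dot\psi=\dot{(\nu\circ\mu)}$ then gives $\theta=\dot{(\nu\circ\mu)}\circ\dot\psi^{-1}$ and $\dot\psi=\theta^{-1}\circ\dot{(\nu\circ\mu)}$, from which both implications drop out at once, each map being exhibited as a composition of completely positive maps. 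Your backward direction is the same factorisation in disguise. Your forward direction takes a genuinely more hands-on route: instead of invoking complete positivity of $\dot\psi^{-1}$, you use Proposition~\ref{cqchar} to lift $[s_{ij}]+\epsilon 1_n$ to a strongly positive element of $M_n(\cl R)$ and push it through $\nu\circ\mu$, then dispose of the error term $\epsilon\,\theta^{(n)}(1_n)$ via self-adjointness of $\theta$ (inherited from the completely positive inverse) and the order-unit bound $\theta^{(n)}(1_n)\le M\cdot 1_n$, closing with the Archimedean property. All of these steps are sound; the handling of the possibly non-unital $\theta$ is actually more careful than the statement strictly requires. What the abstract argument buys is brevity and the avoidance of Proposition~\ref{cqchar} altogether; what yours buys is an explicit picture of where the strongly positive liftings go, which is in the spirit of how the lemma is used throughout the paper.
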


We recall the \emph{universal C*-algebra} $C^*_u(\cl S)$ 
of an operator system $\cl S$: it is the unique (up to a *-isomorphism)
C*-algebra containing $\cl S$ 
and having the property that whenever $\nph : \cl S\to \cl B(H)$ is a 
unital completely positive map, there exists a unique *-homomorphism 
$\pi : C^*_u(\cl S)\to \cl B(H)$ extending $\nph$.

%%%%%%%%%%%%%%%%%%%%%%

\section{Characterisations of WEP via Group C$^*$-Algebras}

If $G$ is a discrete group, we let $C^*(G)$ denote, as is customary, the (full) group
C*-algebra of $G$. Of particular interest are free groups with 
finitely many, say $n$, or countably many, generators, which we denote
by
$\bb{F}_n$ and $\bb{F}_\infty$, respectively.

Kirchberg \cite[Proposition 1.1(iii)]{kirchberg1993} proved that  a C*-algebra $\cl A$  possesses the \emph{weak expectation property (WEP)} if and only if 
$C^*(\bb{F}_{\infty})\otimes_{\min} \cl A = C^*(\bb{F}_{\infty})\otimes_{\max} \cl A$. 
We will not use the Lance's original definition of WEPin this paper, only Kirchberg's characterisation. In this sense our paper is really about characterisations of C*-algebras 
that satisfy Kirchberg's tensor formula and it is only and it is only because of Kirchberg's theorem that these are characterisations of WEP.  
  
\emph{Kirchberg's Conjecture}, on the other hand, asserts that 
the C*-algebra $C^*(\bb{F}_{\infty})$ possesses WEP, i.e., 
that $C^*(\bb{F}_{\infty}) \otimes_{\min} C^*(\bb F_{\infty}) = C^*(\bb F_{\infty}) \otimes_{\max} C^*(\bb F_{\infty})$.
Kirchberg proved that Connes' Embedding Conjecture, which is a statement about type II$_1$-factors, is 
equivalent to the statement that $C^*(\bb{F}_{\infty})$ possesses WEP. Consequently, many author's refer to what we are calling Kirchberg's Conjecture as Connes' Embedding Problem.  
We prefer to distinguish between the two to stress that we are using Kirchberg's formulation.

A C$^*$-algebra $\cl A$ is said to have the \emph{quotient weak expectation property (QWEP)} if $\cl A$ is a quotient of a 
C$^*$-algebra $\cl B$ that has WEP. In many ways QWEP is a better behaved notion than WEP, as QWEP
enjoys a number of permanence properties that are not necessarily shared by WEP: see, for example, \cite[Proposition 4.1]{ozawa}.

Lifting properties will play an important role in the sequel. 
If $\cl J$ is an ideal in a unital C$^*$-algebra $\cl B$ 
and if $q_{\cl J}:\cl B\rightarrow\cl B /\cl J$ is the canonical quotient homomorphism, then 
a unital completely positive map $\phi:\cl S\rightarrow\cl B / \cl J$ of an operator system $\cl S$ into $\cl B/\cl J$ is said to be
\emph{liftable} if there is a unital completely positive map 
$\psi:\cl S\rightarrow \cl B$ such that $\phi=q_{\cl J}\circ\psi$.
A unital C$^*$-algebra $\cl A$ has the \emph{lifting property (LP)} if every 
unital completely positive map $\phi$ of $\cl A$ into $\cl B/\cl J$ is liftable, 
for every unital C*-algebra $\cl B$ and every closed ideal $\cl J\subseteq \cl B$.
A unital C$^*$-algebra $\cl A$ has the \emph{local lifting property (LLP)} if for every
unital completely positive map $\phi$ of $\cl A$ into $\cl B/\cl J,$ the restriction of $\phi$ to any finite dimensional operator subsystem 
$\cl S\subseteq \cl A$ is liftable.
In the 
operator system context, these lifting properties were studied in 
\cite{kavruk--paulsen--todorov--tomforde2010}.

If $\cl A_1$ and $\cl A_2$ are unital C*-algebras, 
we denote by $\cl A_1\ast\cl A_2$ the free product 
C*-algebra, amalgamated over the unit. The same notation 
is used for free products of groups.
The following result, which combines results of Boca \cite{Boca91} and Pisier \cite[Theorem 1.11]{pisier_intr}, will be useful for us 
in the sequel.

\begin{theorem}\label{th_boca}
Let $\cl A_1,\dots,\cl A_n$ be unital C*-algebras and $\varphi_i: \cl A_i \rightarrow \cl B(H)$ 
be unital completely positive maps, $i=1,\dots,n$.
Then there exists a unital completely positive 
map $\varphi : \cl A_1 * \cdots * \cl A_n\rightarrow \cl B(H)$ 
such that $\varphi|_{\cl A_i} = \varphi_i$. Furthermore, if each $\cl A_j$ is a separable C$^*$-algebra with LP, 
then $\cl A_1 * \cdots * \cl A_n$ has LP.
\end{theorem}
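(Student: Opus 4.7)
The plan is to prove the two assertions separately, with the second building on the first.

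For the existence of $\varphi$ (Boca's contribution), I would proceed via Stinespring dilation. For each $i$, write $\varphi_i(\cdot) = V_i^* \pi_i(\cdot) V_i$ with $\pi_i : \cl A_i \to \cl B(K_i)$ a unital $*$-representation and $V_i : H \to K_i$ an isometry. The key is to place all the $\pi_i$ on a single Hilbert space so that the universal property of the free product applies. To do this, I would form the Hilbert space free product of the pointed spaces $(K_i, V_i H)$: letting $K_i^0 = K_i \ominus V_i(H)$, set
$$\tilde K \;=\; H \;\oplus\; \bigoplus_{m\geq 1}\;\bigoplus_{i_1\neq i_2\neq\cdots\neq i_m} K_{i_1}^0 \otimes \cdots \otimes K_{i_m}^0,$$
and use the Voiculescu/Boca recipe to lift each $\pi_i$ to a unital $*$-representation $\tilde\pi_i : \cl A_i \to \cl B(\tilde K)$ acting by a ``diagonal'' action combined with a shift on the remaining tensor words. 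The universal property of $\cl A_1 * \cdots * \cl A_n$ then produces a unique $*$-homomorphism $\tilde\pi : \cl A_1 * \cdots * \cl A_n \to \cl B(\tilde K)$ with $\tilde\pi|_{\cl A_i} = \tilde\pi_i$. If $W : H \hookrightarrow \tilde K$ is the natural isometric inclusion, a direct check using the definition of $\tilde\pi_i$ on the $H$-summand gives $W^* \tilde\pi(a) W = \varphi_i(a)$ for $a\in \cl A_i$, and $\varphi := W^* \tilde\pi(\cdot) W$ is the desired unital completely positive extension.

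For the LP assertion (Pisier's contribution), given a unital completely positive $\Phi : \cl A_1 * \cdots * \cl A_n \to \cl B/\cl J$ I want a unital completely positive lift $\tilde\Phi : \cl A_1 * \cdots * \cl A_n \to \cl B$. The natural first move is to apply LP of each $\cl A_j$ separately: this lifts every restriction $\Phi|_{\cl A_j}$ to a unital completely positive $\tilde\Phi_j : \cl A_j \to \cl B$, and the first part of the theorem then assembles these into a unital completely positive $\tilde\Phi : \cl A_1 * \cdots * \cl A_n \to \cl B$. The main obstacle is precisely here: $q_{\cl J}\circ\tilde\Phi$ agrees with $\Phi$ on each $\cl A_j$ but, since unital completely positive maps out of a free product are not determined by their restrictions to the factors, one cannot conclude $q_{\cl J}\circ\tilde\Phi = \Phi$. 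This is where separability must be used in an essential way.

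To get around this, I would follow Pisier's strategy: enumerate a dense sequence $(x_k)$ in the unit ball of $\cl A_1 * \cdots * \cl A_n$, and iteratively correct the naive lift on growing finite sets. At stage $k$, one uses a projective/Michael-selection argument (valid because $\cl J$ is separable and $q_{\cl J}$ is open) to perturb the factor-wise lifts $\tilde\Phi_j$ by small completely positive corrections so that the Boca extension of the corrected maps approximates $\Phi(x_1),\dots,\Phi(x_k)$ modulo $\cl J$ within $2^{-k}$. A diagonal/Cauchy extraction, combined with completeness of the completely-bounded-norm topology on unital completely positive maps into $\cl B$, then yields a genuine unital completely positive lift of $\Phi$ on all of $\cl A_1 * \cdots * \cl A_n$. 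The delicate step in this scheme is ensuring that the corrections at each stage remain compatible with unitality and complete positivity simultaneously on every factor; this is exactly where the separable LP hypothesis on each $\cl A_j$ is fully exploited.
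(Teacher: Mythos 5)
A preliminary remark on the comparison: the paper does not prove Theorem \ref{th_boca} at all --- it is imported from the literature (Boca for the extension statement, Kirchberg--Pisier for the LP statement) --- so your sketch is really being measured against the known proofs rather than against anything in the text.

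Your first paragraph is essentially Boca's dilation argument and is fine in outline, with one caveat: when $\dim H>1$ the complement of $K_i$ inside $\tilde K$ is $N_i\oplus(K_i^0\otimes N_i)$ (with $N_i$ the sum of the word spaces not starting with $i$), which is \emph{not} unitarily a multiple of $K_i$, so the ``diagonal action combined with a shift'' does not literally define $\tilde\pi_i$ there; this is exactly the bookkeeping Boca has to do. For the bare existence statement it can be avoided entirely: realise all the Stinespring representations on a single Hilbert space $K\supseteq H$ (padding $K\ominus K_i$ so that it carries \emph{some} unital representation of $\cl A_i$), apply the universal property of the full free product to the resulting $*$-representations, and compress to $H$.

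The LP part has a genuine gap, located precisely at the step you flag and do not resolve. Whatever unital completely positive maps $\psi_j:\cl A_j\to\cl B$ you feed into Boca's construction, the resulting extension $\Psi$ is given by an explicit formula on reduced words in terms of the $\psi_j$ alone; since $q_{\cl J}$ is a $*$-homomorphism, $q_{\cl J}\circ\Psi$ is the Boca extension of the maps $q_{\cl J}\circ\psi_j$, hence is completely determined by its restrictions to the factors. So every map your scheme can produce downstairs --- however you ``perturb'' the factor-wise lifts --- is a Boca free product, and if the scheme worked it would prove that every unital completely positive map on $\cl A_1*\cdots*\cl A_n$ is a point-norm limit of such maps. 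This is false already for $C^*(\bb Z_2)*C^*(\bb Z_2)$ and $\cl B=\cl B/\cl J=\bb C$: the vector state $\Phi(\cdot)=\langle\pi(\cdot)\xi,\xi\rangle$ of the representation $h_1\mapsto\left(\begin{smallmatrix}0&1\\1&0\end{smallmatrix}\right)$, $h_2\mapsto\left(\begin{smallmatrix}1&0\\0&-1\end{smallmatrix}\right)$ at $\xi=\tfrac1{\sqrt2}(1,i)$ has $\Phi(h_1)=\Phi(h_2)=0$ but $\Phi(h_1h_2)=-i$, whereas any Boca free product $\chi_1*\chi_2$ of states satisfies $(\chi_1*\chi_2)(h_1h_2)=\chi_1(h_1)\chi_2(h_2)$, so no such map matches $\Phi$ on the three elements $h_1,h_2,h_1h_2$ to within $\tfrac12$. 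Thus the Michael-selection/Arveson-limit machinery is aimed at a class of approximants that cannot reach a general $\Phi$; separability is not where the difficulty lies. The actual argument runs differently: present each $\cl A_j$ as a quotient $q_j:C^*(\bb F_\infty)\to\cl A_j$, use LP of $\cl A_j$ to lift $\id_{\cl A_j}$ to $s_j$ with $q_j\circ s_j=\id$, and observe that the Boca combination $s$ of the $s_j$ satisfies $(q_1*\cdots*q_n)\circ s=\id$ \emph{because $q_1*\cdots*q_n$ is multiplicative and so reassembles the product formula}; LP of $\cl A_1*\cdots*\cl A_n$ then follows from LP of $C^*(\bb F_\infty)*\cdots*C^*(\bb F_\infty)\cong C^*(\bb F_\infty)$, which is Kirchberg's theorem and is the genuinely hard input your sketch does not account for.
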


\begin{example}\label{lp groups} The following group C$^*$-algebras have property LP:
\begin{enumerate}
\item $C^*(\bb{F}_n)$, for all $n\in\mathbb N\cup\{\infty\}$;
\item $C^*(SL_2(\mathbb Z))$;
\item $C^*(\ast_{j=1}^n \bb{Z}_2 )$, where $\ast_{j=1}^n \bb{Z}_2$ is
the $n$-fold free product of $n$ copies of $\mathbb Z_2$, $n\in\mathbb N$.
\end{enumerate}
\end{example}

\begin{proof}
The fact that $\cstar(\bb{F}_n)$ and $C^*(SL_2(\mathbb Z))$ have the lifting property (LP) for all $n\in\mathbb N\cup\{\infty\}$ was established by Kirchberg \cite{kirchberg1993}.
There are alternate proofs for the assertion that $\cstar(\bb{F}_n)$ has LP: see \cite{ozawa,pisier_intr}, for example.

Suppose that 
$\phi: C^*(\bb{Z}_2)\to \cl B/\cl J$ is a unital completely 
positive map, where $\cl B$ is a unital C*-algebra and $\cl J\subseteq \cl B$ is a closed 
ideal. Let $b\in \cl B$ be a 
selfadjoint contractive lifting of $\phi(h)$, where $h$ is the 
generator of $C^*(\bb{Z}_2)$. Then the linear 
map $\tilde{\phi} : C^*(\bb{Z}_2)\to \cl B$
given by $\tilde{\phi}(h) = b$ is unital and completely positive 
(see, e.g. Proposition \ref{p_unip} (2)), which
%\marginpar{\tiny another reference given}
is clearly a positive lifting of $\phi$.
To complete the proof observe that,
because $C^*(\ast_{j=1}^n \bb{Z}_2) = \ast_{j=1}^n C^*(\bb{Z}_2)$, 
Theorem \ref{th_boca} implies that  $C^*(\ast_{j=1}^n \bb{Z}_2 )$ has LP.
\end{proof}

%We refer the reader to \cite{pisier_intr} for a discussion of this and 
%other relevant notions, such as the \emph{lifting property} and the 
%\emph{local lifting property} for C*-algebras; 

We next record some observations that allow us to replace $\bb{F}_{\infty}$ in the formulation of 
Kirchberg's Conjecture by other discrete groups and, subsequently, to replace WEP by QWEP. 
Some of the results are certainly well-known, but we include their proofs for completeness.

\begin{proposition}\label{gwilldo} Let $G_1$ and $G_2$ be countable discrete groups that contain $\bb{F}_2.$ If $C^*(G_1) \otimes_{\min} C^*(G_2) = C^*(G_1) \otimes_{\max} C^*(G_2)$, then Kirchberg's Conjecture is true.
\end{proposition}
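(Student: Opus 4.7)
The plan is to use the chain of subgroup inclusions $\bb{F}_\infty\le \bb{F}_2\le G_i$ to transfer the tensorial equality hypothesized for $C^*(G_1)\otimes C^*(G_2)$ down to $C^*(\bb{F}_\infty)\otimes C^*(\bb{F}_\infty)$, which is precisely Kirchberg's Conjecture. The key classical ingredient is that for any subgroup $H$ of a discrete group $G$, the characteristic function $1_H$ is positive definite on $G$ (witnessed by the quasi-regular representation on $\ell^2(G/H)$ together with the vector $\delta_{eH}$). Consequently, the slicing map $\sum c_g\lambda_g\mapsto \sum_{g\in H}c_g\lambda_g$ extends to a unital completely positive conditional expectation $E_{H,G}\colon C^*(G)\to C^*(H)$ which is a left inverse to the canonical *-monomorphism $\iota_{H,G}\colon C^*(H)\hookrightarrow C^*(G)$.

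Applying this with $H=\bb{F}_2$ and $G=G_i$ for $i=1,2$ produces unital completely positive maps $\iota_i$ and $E_i$ with $E_i\circ\iota_i=\id$. By functoriality of min and max, the maps $\iota_1\otimes\iota_2$ and $E_1\otimes E_2$ are unital completely positive for either $\tau\in\{\min,\max\}$, and their composition is the identity. The standard retract observation then gives that $\iota_1\otimes\iota_2$ is a complete order embedding at every matrix level: if $(\iota_1\otimes\iota_2)^{(n)}(x)\ge 0$, then $x=(E_1\otimes E_2)^{(n)}((\iota_1\otimes\iota_2)^{(n)}(x))\ge 0$. Hence
\[
C^*(\bb{F}_2)\otimes_\tau C^*(\bb{F}_2)\coisubset C^*(G_1)\otimes_\tau C^*(G_2)\quad (\tau\in\{\min,\max\}).
\]
Combining these two embeddings with the hypothesis that the min and max operator system structures coincide on $C^*(G_1)\otimes C^*(G_2)$ forces them to coincide on $C^*(\bb{F}_2)\otimes C^*(\bb{F}_2)$ as well.

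Finally, I would repeat the same construction using an explicit embedding $\bb{F}_\infty\hookrightarrow\bb{F}_2$ (for instance $a_n=b^{n-1}ab^{-(n-1)}$ for $n\ge 1$) together with its conditional expectation, obtaining complete order embeddings $C^*(\bb{F}_\infty)\otimes_\tau C^*(\bb{F}_\infty)\coisubset C^*(\bb{F}_2)\otimes_\tau C^*(\bb{F}_2)$ for $\tau\in\{\min,\max\}$, and thereby conclude
\[
C^*(\bb{F}_\infty)\omin C^*(\bb{F}_\infty)=C^*(\bb{F}_\infty)\omax C^*(\bb{F}_\infty),
\]
which is Kirchberg's Conjecture. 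I do not anticipate a substantive obstacle; the only mildly delicate point is constructing the ucp conditional expectation on the \emph{full} (rather than reduced) group C*-algebra, but this is a direct consequence of the positive definiteness of $1_H$.
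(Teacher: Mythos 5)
Your proof is correct and follows essentially the same route as the paper: the paper cites Pisier's Proposition 8.8 for precisely the conditional-expectation/retract pair $C^*(\bb F_\infty)\hookrightarrow C^*(G_i)$, $P_i:C^*(G_i)\to C^*(\bb F_\infty)$ that you construct by hand from the positive definiteness of $1_H$, and then sandwiches the min--max identity exactly as you do (in one step via $\bb F_\infty\le\bb F_2\le G_i$ rather than your two stages). The only difference is cosmetic.
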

\begin{proof}
Since $\bb{F}_{2}$ contains $\bb{F}_{\infty}$ as a subgroup, it follows by our 
assumption that $G_1$ and $G_2$ do as well. 
By \cite[Proposition 8.8]{pisier_intr}, for $i=1,2$ there exists a canonical compete order embedding 
$\nph_i : C^*(\bb{F}_{\infty})\to C^*(G_i)$ and a unital completely positive projection $P_i : C^*(G_i)\to C^*(\bb{F}_{\infty})$.
%onto the range of $\nph$; thus, . 
Thus, there is a chain of completely positive maps
\begin{multline*}
C^*(\bb{F}_{\infty})\otimes_{\min} C^*(\bb F_{\infty})
\stackrel{\nph_1\otimes\nph_2}{\longrightarrow} C^*(G_1)\otimes_{\min} C^*(G_2) 
= \\C^*(G_1)\otimes_{\max}C^*(G_2) 
\stackrel{P_1\otimes P_2}{\longrightarrow} C^*(\bb{F}_{\infty})\otimes_{\max} C^*(\bb F_{\infty}),
\end{multline*}
and the result follows.
\end{proof}

\begin{definition} Consider the following two properties of a unital C$^*$-algebra $\cl A$:
\begin{enumerate}
\item $\cl A$ has WEP;
\item $C^*(G)\omin\cl A=C^*(G)\omax\cl A$.
\end{enumerate}
We say that a countable discrete group $G$ \emph{detects WEP} if (2) implies (1) and that $G$ \emph{characterises WEP} if (2) and (1) are equivalent.
\end{definition}

\begin{proposition}\label{p_dg}
Every countable discrete group $G$ that contains $\bb{F}_2$ as a subgroup detects WEP. If, in addition,
$C^*(G)$ has the local lifting property, then $G$ characterises WEP.
\end{proposition}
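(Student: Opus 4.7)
The plan is to attack the two implications separately, reducing both to Kirchberg's theorem characterising WEP through tensorial equality with $\cstar(\bb{F}_\infty)$.

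For the first assertion (detection), I would begin by observing that since $\bb{F}_2$ contains $\bb{F}_\infty$ as a subgroup, any $G$ containing $\bb{F}_2$ automatically contains $\bb{F}_\infty$. Invoking \cite[Proposition 8.8]{pisier_intr} exactly as in the proof of Proposition \ref{gwilldo}, I obtain a unital complete order embedding $\nph : \cstar(\bb{F}_\infty) \to \cstar(G)$ together with a unital completely positive projection $P : \cstar(G) \to \cstar(\bb{F}_\infty)$ satisfying $P \circ \nph = \id$. Tensoring these with $\id_{\cl A}$ and exploiting the functoriality of $\min$ and $\max$, I build a chain
\[
\cstar(\bb{F}_\infty)\omin\cl A \xrightarrow{\nph\otimes\id} \cstar(G)\omin\cl A = \cstar(G)\omax\cl A \xrightarrow{P\otimes\id} \cstar(\bb{F}_\infty)\omax\cl A,
\]
where the middle equality is the standing hypothesis. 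The composition equals $(P\nph)\otimes\id = \id$, so the identity map $\cstar(\bb{F}_\infty)\omin\cl A \to \cstar(\bb{F}_\infty)\omax\cl A$ is unital completely positive. Combining with the automatic reverse inequality $\omax \subseteq_+ \omin$, I conclude $\cstar(\bb{F}_\infty)\omin\cl A = \cstar(\bb{F}_\infty)\omax\cl A$, so by Kirchberg's theorem $\cl A$ has WEP.

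For the second assertion, I need the converse: if $\cstar(G)$ has LLP and $\cl A$ has WEP, then $\cstar(G)\omin\cl A = \cstar(G)\omax\cl A$. This is precisely the combined form of Kirchberg's theorem (the part asserting that $B\omin\cl A = B\omax\cl A$ whenever $B$ has LLP and $\cl A$ has WEP), which I would cite from \cite{kirchberg1993} (or equivalently from the exposition in \cite{pisier_intr} or \cite{ozawa}). Applying it with $B = \cstar(G)$ closes the equivalence.

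The main obstacle is that Part 2 is not an elementary consequence of the Kirchberg characterisation stated in the excerpt, namely the ``$\bb{F}_\infty$-form''; it genuinely requires the stronger Kirchberg theorem that pairs WEP algebras against LLP algebras. Part 1, by contrast, is a completely formal retraction argument, reducing to the Pisier embedding/projection pair for the subgroup inclusion $\bb{F}_\infty \hookrightarrow G$ and then invoking only the $\bb{F}_\infty$-form of Kirchberg.
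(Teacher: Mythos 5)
Your argument is correct and coincides with the paper's own proof: both parts rest on the Pisier embedding/projection pair $\nph, P$ for $\bb{F}_\infty \hookrightarrow G$ and the resulting chain $\cstar(\bb{F}_\infty)\omin\cl A \to \cstar(G)\omin\cl A = \cstar(G)\omax\cl A \to \cstar(\bb{F}_\infty)\omax\cl A$ for detection, and on Kirchberg's LLP-versus-WEP tensor theorem (Proposition 1.1(i) of \cite{kirchberg1993}) for the converse. The only cosmetic difference is that you make explicit that $P\circ\nph = \id$, which the paper leaves implicit.
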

\begin{proof} Suppose that $G$ contains $\bb{F}_2$ and that 
$\cl A$ is a unital C$^*$-algebra for which $C^*(G)\omin\cl A=C^*(G)\omax\cl A.$
Since $\bb{F}_{2}$ contains $\bb{F}_{\infty}$ as a subgroup, it follows by our 
assumption that $G$ does so as well. 
By \cite[Proposition 8.8]{pisier_intr}, there exists a canonical compete order embedding 
$\nph : C^*(\bb{F}_{\infty})\to C^*(G)$ and a unital completely positive projection $P : C^*(G)\to C^*(\bb{F}_{\infty})$.
%onto the range of $\nph$; thus, . 
Thus, there is a chain of completely positive maps
$$C^*(\bb{F}_{\infty})\otimes_{\min} \cl A 
\stackrel{\nph\otimes\id}{\to} C^*(G)\otimes_{\min} \cl A 
= C^*(G)\otimes_{\max}\cl A 
\stackrel{P\otimes\id}{\to} C^*(\bb{F}_{\infty})\otimes_{\max} \cl A,$$
and so  $C^*(\bb{F}_{\infty})\otimes_{\min}\cl A = C^*(\bb{F}_{\infty})\otimes_{\max}\cl A$. 

If $C^*(G)$ has the local lifting property and $\cl A$ has WEP, then $C^*(G)\otimes_{\min}\cl A = C^*(G)\otimes_{\max}\cl A$, by
 \cite[Proposition 1.1(i)]{kirchberg1993}.
 \end{proof}
 
 \begin{example}\label{p_dg_eg} The following countable discrete groups characterise WEP:
 \begin{enumerate}
 \item $SL_2(\mathbb Z)$;
 \item $\ast_{j=1}^n \bb{Z}_2$, if $n \geq 3$.
 \end{enumerate}
 \end{example}
 
 \begin{proof}
%The group $SL_2(\mathbb Z)$ has the lifting property \cite{ki} and contains $\mathbb %F_2$. 
Recall that Example \ref{lp groups} shows that $C^*( SL_2(\mathbb Z))$ and
$C^*(\ast_{j=1}^n \bb{Z}_2)$ have the lifting property, which is stronger than the local lifting property. 
Moreover, $SL_2(\mathbb Z)$ contains a copy of $\mathbb F_2$ (see, {\it e.g.}, \cite[p.486]{kirchberg1993}) 
%\marginpar{\tiny reference about $SL_2(\mathbb Z)$?}
as does 
$\ast_{j=1}^n \bb{Z}_2$ for $n\geq 3$ (see, {\it e.g.}, \cite{fritz}). 
Thus, Proposition \ref{p_dg} applies to each of these groups.
\end{proof}

\noindent {\bf Remark.} That the free product of $n$ copies of $\mathbb Z_2$ detects WEP 
was also established by T.~Fritz in \cite{fritz} 
using a different method.  

Since $SL_3(\bb Z)$ also contains $\bb F_2$ this group detects WEP, but it is not known if $C^*(SL_3(\bb Z))$ has the lifting property. It would be interesting to know whether $SL_3(\mathbb Z)$ characterises WEP.
More generally, since
every countable discrete group $G$ that contains $\mathbb F_2$ as a subgroup
and has the local lifting property characterises WEP (Proposition \ref{p_dg}) it would be interesting to know if
these two sufficient conditions are in fact necessary. A small step in this direction is the following proposition.

\begin{proposition}\label{tits alt} If a finitely generated discrete subgroup $G$ of $GL_n(\mathbb C)$ detects WEP, then $G\supseteq\mathbb F_2$ and $C^*(G)$ is a non-exact C$^*$-algebra.
\end{proposition}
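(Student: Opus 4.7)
The plan is to establish the two conclusions separately, each by a contrapositive argument. First I would prove $G\supseteq\mathbb F_2$ via the Tits alternative: if a finitely generated subgroup of $GL_n(\mathbb C)$ fails to contain $\mathbb F_2$, then it is virtually solvable, and hence amenable. Amenability of $G$ makes $C^*(G)$ nuclear, so the equality $C^*(G)\omin\cl A=C^*(G)\omax\cl A$ holds for \emph{every} C$^*$-algebra $\cl A$. To contradict detection of WEP, it then suffices to exhibit any C$^*$-algebra without WEP; the reduced free group C$^*$-algebra $C^*_r(\mathbb F_2)$ is a convenient witness, since it is exact but non-nuclear, and therefore cannot have WEP by Kirchberg's theorem that exact $+$ WEP $=$ nuclear.

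Second, given the containment $\mathbb F_2\subseteq G$ just obtained, I would derive the non-exactness of $C^*(G)$ by contradiction. The subgroup inclusion induces an injective unital $*$-homomorphism $C^*(\mathbb F_2)\hookrightarrow C^*(G)$, a standard fact for discrete subgroups (provable via induced representations, and of course implicit in the Pisier Proposition~8.8 result already invoked in the paper). If $C^*(G)$ were exact, then $C^*(\mathbb F_2)$, being a C$^*$-subalgebra, would inherit exactness. This contradicts Wassermann's theorem that the full group C$^*$-algebra $C^*(\mathbb F_n)$ is non-exact for every $n\geq 2$.

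Structurally the argument is a short assembly of standard results. The substantial external inputs are the Tits alternative on the group-theoretic side and Wassermann's non-exactness of $C^*(\mathbb F_n)$ on the C$^*$-algebra side; once these are available, the two parts reduce to formal consequences of the nuclearity of $C^*(G)$ for amenable $G$, the Kirchberg identity \emph{exact $+$ WEP $=$ nuclear}, and hereditarity of exactness under C$^*$-subalgebras. I do not expect a genuine obstacle: the main points requiring care are the verification that $C^*_r(\mathbb F_2)$ really fails WEP (handled by the Kirchberg identity) and that $C^*(\mathbb F_2)\hookrightarrow C^*(G)$ is an honest $*$-monomorphism (well documented), neither of which is deep.
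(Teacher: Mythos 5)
Your proof is correct, and the first half (Tits alternative $\Rightarrow$ virtually solvable $\Rightarrow$ amenable $\Rightarrow$ $C^*(G)$ nuclear $\Rightarrow$ $G$ cannot detect WEP) coincides with the paper's argument; your explicit witness $C^*_r(\mathbb F_2)$ of a unital C$^*$-algebra without WEP is a welcome detail that the paper leaves implicit. For the non-exactness of $C^*(G)$, however, you take a genuinely different route. The paper argues again from the detection hypothesis: since $G$ is a finitely generated linear group, its finitely generated subgroups are maximally almost periodic, so by Kirchberg's theorem (Theorem 7.5 of \cite{kirchberg1993}) exactness of $C^*(G)$ would force nuclearity, which is incompatible with detecting WEP. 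You instead deduce non-exactness directly from the containment $\mathbb F_2\subseteq G$ established in the first step: the subgroup inclusion gives a $*$-monomorphism $C^*(\mathbb F_2)\hookrightarrow C^*(G)$, exactness passes to C$^*$-subalgebras, and $C^*(\mathbb F_2)$ is non-exact by Wassermann. Both arguments are valid. Yours has the advantage of showing that non-exactness is already a consequence of the conclusion $G\supseteq\mathbb F_2$ alone (so it would apply to any group containing $\mathbb F_2$, detection or not), at the cost of importing Wassermann's non-exactness theorem and the hereditarity of exactness; the paper's version stays entirely within the linearity/MAP framework and the detection hypothesis, which is why the paper's closing remark can extend the $\mathbb F_2$-containment conclusion, but not the non-exactness conclusion, to arbitrary groups satisfying the Tits alternative.
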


\begin{proof} If a finitely generated discrete subgroup $G$ of $GL_n(\mathbb C)$ does not contain $\mathbb F_2$ as a subgroup, 
then $G$ contains a normal subgroup $H$ such that $H$ is solvable and $G/H$ is finite (this is the ``Tits Alternative'').
As solvable and finite groups are amenable, $G$ is an extension of an amenable group by an amenable group and is, therefore, amenable. Hence, $C^*(G)$ is nuclear and therefore $G$ cannot detect WEP.
The only alternative, thus, is that $G$ contains $\mathbb F_2$ as a subgroup.

Because $G$ is a finitely generated linear group, 
every finitely generated subgroup of $G$ is maximally almost periodic (see, {\it e.g.}, \cite{kirchberg1993}). 
%\marginpar{\tiny reference added}
Thus, if $C^*(G)$ were exact, then it would in fact be nuclear
\cite[Theorem 7.5]{kirchberg1993}; but then $G$ cannot detect WEP.
\end{proof}

Observe that the argument of Proposition \ref{tits alt} applies to any countable discrete group $G$ 
for which the Tits Alternative holds and it yields the inclusion $G\supseteq\mathbb F_2$.

\begin{proposition}\label{keq} The following statements are equivalent for a countable discrete group $G$
that contains $\bb F_2$ and such that C$^*(G)$ has the local lifting property: 
\begin{enumerate}
\item\label{un} Kirchberg's Conjecture is true;
\item\label{deux} $C^*(G)$ has WEP;
\item\label{trois} $C^*(G)$ has QWEP.
\end{enumerate}
\end{proposition}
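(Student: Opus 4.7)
The plan is to establish the cycle $(\ref{un}) \Rightarrow (\ref{deux}) \Rightarrow (\ref{trois}) \Rightarrow (\ref{un})$, the first two being quick and the third requiring the full strength of the hypotheses $\bb F_2 \subseteq G$ and $C^*(G)$ has LLP.

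For $(\ref{un}) \Rightarrow (\ref{deux})$, note that $(\ref{un})$ is by definition the assertion that $C^*(\bb F_\infty)$ has WEP. Since $C^*(G)$ has LLP, Kirchberg's tensor theorem (invoked already in the proof of Proposition \ref{p_dg}) gives $C^*(G)\omin C^*(\bb F_\infty) = C^*(G)\omax C^*(\bb F_\infty)$, and Kirchberg's basic characterisation of WEP (quoted at the beginning of Section 3) then forces $C^*(G)$ to have WEP. The implication $(\ref{deux}) \Rightarrow (\ref{trois})$ is immediate, since every unital C*-algebra is trivially a quotient of itself.

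The substance lies in $(\ref{trois}) \Rightarrow (\ref{un})$. Write $C^*(G) = \ccb/\cl J$ where $\ccb$ is a unital C*-algebra with WEP, and let $q:\ccb \to C^*(G)$ denote the quotient. By \cite[Proposition 8.8]{pisier_intr}, the subgroup inclusion $\bb F_\infty \le G$ yields a $*$-homomorphic embedding $\varphi: C^*(\bb F_\infty) \to C^*(G)$ together with a unital completely positive projection $P: C^*(G) \to C^*(\bb F_\infty)$ satisfying $P \circ \varphi = \id$. Example \ref{lp groups} shows that $C^*(\bb F_\infty)$ has the lifting property, so $\varphi$ lifts through $q$ to a unital completely positive map $\tilde\varphi: C^*(\bb F_\infty) \to \ccb$; consequently $P \circ q \circ \tilde\varphi = \id_{C^*(\bb F_\infty)}$. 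Tensoring $\tilde\varphi$, $q$ and $P$ with the identity on $C^*(\bb F_\infty)$, using functoriality of $\omin$ and $\omax$ for ucp maps, and exploiting the equality $\ccb \omax C^*(\bb F_\infty) = \ccb \omin C^*(\bb F_\infty)$ supplied by Kirchberg's characterisation applied to the WEP-algebra $\ccb$, one obtains a chain of ucp maps
\begin{multline*}
C^*(\bb F_\infty) \omax C^*(\bb F_\infty) \stackrel{\tilde\varphi \otimes \id}{\to} \ccb \omax C^*(\bb F_\infty) = \ccb \omin C^*(\bb F_\infty) \\
\stackrel{q \otimes \id}{\to} C^*(G) \omin C^*(\bb F_\infty) \stackrel{P \otimes \id}{\to} C^*(\bb F_\infty) \omin C^*(\bb F_\infty),
\end{multline*}
whose composition equals the identity on the algebraic tensor product. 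Thus the canonical map $C^*(\bb F_\infty)\omax C^*(\bb F_\infty) \to C^*(\bb F_\infty)\omin C^*(\bb F_\infty)$ is completely positive; together with the always-available reverse direction $\omin \leq \omax$, this is precisely Kirchberg's Conjecture.

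The main obstacle is $(\ref{trois}) \Rightarrow (\ref{un})$: one must extract a tensor equality intrinsic to $C^*(\bb F_\infty)$ from a WEP property of the a priori much larger covering algebra $\ccb$. The decisive device is the LP of $C^*(\bb F_\infty)$ from Example \ref{lp groups}, which exhibits $C^*(\bb F_\infty)$ as a unital completely positive retract of $\ccb$, enabling the transfer above.
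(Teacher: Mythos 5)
Your implications (\ref{un})$\Rightarrow$(\ref{deux}) and (\ref{deux})$\Rightarrow$(\ref{trois}) are fine and agree with the paper. The problem is in (\ref{trois})$\Rightarrow$(\ref{un}): your chain of unital completely positive maps starts at $C^*(\bb F_\infty)\omax C^*(\bb F_\infty)$ and ends at $C^*(\bb F_\infty)\omin C^*(\bb F_\infty)$, and a ucp map in \emph{that} direction which restricts to the identity on the algebraic tensor product is nothing but the canonical quotient $*$-homomorphism from $\omax$ to $\omin$, which exists for every pair of C*-algebras and carries no information. Your closing appeal to ``the always-available reverse direction $\omin\le\omax$'' is, in the paper's ordering convention, exactly the same automatic fact that $\omax\to\omin$ is completely positive, so the argument as written establishes nothing. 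Kirchberg's Conjecture requires a completely positive map going the other way, from $\omin$ to $\omax$, equal to the identity on elementary tensors; compare the chain in the proof of Proposition \ref{gwilldo}, which begins at $\omin$ and ends at $\omax$.

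The good news is that every ingredient you assembled is correct, and the implication is repaired simply by running the chain in the opposite direction: apply $\tilde\varphi\otimes\id : C^*(\bb F_\infty)\omin C^*(\bb F_\infty)\to\ccb\omin C^*(\bb F_\infty)$ (functoriality of $\min$), use $\ccb\omin C^*(\bb F_\infty)=\ccb\omax C^*(\bb F_\infty)$ (Kirchberg's characterisation applied to the WEP algebra $\ccb$), and then apply $q\otimes\id$ and $P\otimes\id$ on the $\max$ side (functoriality of $\max$); the composite is ucp, lands in $C^*(\bb F_\infty)\omax C^*(\bb F_\infty)$, and is the identity on elementary tensors because $P\circ q\circ\tilde\varphi=\id$. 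With that correction your route is genuinely different from the paper's: the paper first upgrades QWEP to WEP for $C^*(G)$ itself, via $C^*(G)\subseteq C^*(G)^{**}\subseteq\cl B(\cl H_u)$, Kirchberg's Corollary 3.3(v), and the deep Corollary 3.8(ii) (LLP together with QWEP of the enveloping von Neumann algebra yields a weak expectation), and only then invokes Proposition \ref{gwilldo}. Your argument bypasses WEP of $C^*(G)$ entirely, trading Kirchberg's Corollary 3.8(ii) for the lifting property of $C^*(\bb F_\infty)$ (Example \ref{lp groups}) and the retract identity $P\circ\varphi=\id$; in particular it never uses the LLP of $C^*(G)$ in this implication. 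The paper's version is heavier but delivers the stronger intermediate conclusion that $C^*(G)$ itself has WEP.
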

\begin{proof} 
\eqref{un} $\Rightarrow $\eqref{deux}.
If Kirchberg's conjecture is true, then $C^*(\bb F_{\infty})$ has WEP and, hence by Proposition~\ref{p_dg}, $C^*(G) \otimes_{\min} C^*(\bb F_{\infty}) = C^*(G) \otimes_{\max} C^*(\bb F_{\infty})$ and so by Kirchberg's theorem, $C^*(G)$ has WEP. 

\eqref{deux} $\Rightarrow $\eqref{trois}. Trivial

\eqref{trois} $\Rightarrow $\eqref{un}.
Assume $C^*(G)$ has QWEP.
Let $C^*(G)\subseteq\ C^*(G)^{**}\subseteq\cl B(\cl H_u)$, where $\cl H_u$ is the 
Hilbert space of the universal representation of $C^*(G)$.
Then $C^*(G)^{**}$ has QWEP also \cite[Corollary 3.3(v)]{kirchberg1993}. Since $C^*(G)$ has the lifting property, 
$C^*(G)$ is a unital C$^*$-subalgebra with LLP 
%\marginpar{\tiny \lq LLP' instead of \lq LP'} 
of a von Neumann subalgebra $C^*(G)^{**}$ of $\cl B(\cl H_u)$ with QWEP. Therefore, by 
\cite[Corollary 3.8(ii)]{kirchberg1993}, there is a unital completely positive map 
$\phi:\cl B(\cl H_u)\rightarrow C^*(G)^{**}$ such that $\phi(a)=a$ for every $a\in C^*(G)$. Thus, 
$C^*(G)$ has WEP. Now by Kirchberg's criterion for WEP \cite[Proposition 1.1(iii)]{kirchberg1993}, $C^*(G)\omin C^*(\bb F_\infty)=C^*(G)\omax C^*(\bb F_\infty)$.
Therefore, Propositon~\ref{gwilldo} implies that Kirchberg's Conjecture is true.
\end{proof}

\begin{corollary} Kirchberg's conjecture is true if and only if $C^*(\bb{F}_2)$ is a quotient of a C$^*$-algebra with WEP.
\end{corollary}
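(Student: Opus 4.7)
The plan is to apply Proposition \ref{keq} directly with $G=\bb{F}_2$, since the corollary is essentially the special case of that proposition where the group is taken to be $\bb{F}_2$ itself.

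First I would verify the two hypotheses of Proposition \ref{keq} for the group $G = \bb{F}_2$. The containment $\bb{F}_2 \supseteq \bb{F}_2$ is trivial. For the local lifting property of $C^*(\bb{F}_2)$, I would invoke Example \ref{lp groups}(1), which asserts that $C^*(\bb{F}_n)$ has the (full) lifting property for every $n \in \mathbb{N}\cup\{\infty\}$, and in particular for $n=2$. Since LP is stronger than LLP, the hypotheses of Proposition \ref{keq} are satisfied.

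With these hypotheses in place, Proposition \ref{keq} yields the equivalence of Kirchberg's Conjecture with the assertion that $C^*(\bb{F}_2)$ has QWEP. By definition of QWEP, this latter property says exactly that $C^*(\bb{F}_2)$ is a quotient of a C*-algebra having WEP, which is the desired statement.

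There is no real obstacle here: the corollary is an immediate specialisation of Proposition \ref{keq}. If anything, the only minor point to mention is the conceptual content, namely that the interest of this reformulation lies in reducing Kirchberg's Conjecture to a statement about a single, very concrete C*-algebra, $C^*(\bb{F}_2)$, and in particular removing the need to speak about $\bb{F}_\infty$ or to quantify over all C*-algebras as in the original Kirchberg tensor formulation.
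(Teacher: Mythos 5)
Your proof is correct and matches the paper's intent exactly: the corollary is stated without proof precisely because it is the specialisation of Proposition \ref{keq} to $G=\bb{F}_2$, whose hypotheses hold by Example \ref{lp groups}(1), with the equivalence of (1) and (3) unwinding to the stated claim via the definition of QWEP.
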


%%%%%%%%%%%%%%%%%%%%%%%%%%
\section{Characterisations of WEP via Noncommutative $n$-Cubes}\label{s_pl}

Let $\bb{F}_n$ be the free group on $n$ generators
and $\ast_{j=1}^n \bb{Z}_2$, be the $n$-fold free product of 
the group $\bb{Z}_2$ of two elements ($n\in \bb{N}$). 
Following \cite{kavruk--paulsen--todorov--tomforde2010}, 
we let
$$\cl S_n = {\rm span}\{1,u_i,u_i^* : 1 = 1,\dots,n\}\subseteq C^*(\bb{F}_n),$$
where $u_1,\dots,u_n$ are the generators of $\bb{F}_n$ viewed as 
elements of $C^*(\bb{F}_n)$ and $u^{-1}_{i} = u_i^*$, 
$i = 1,\dots,n$. 
We also let \cite{fkpt} $NC(n)$ be the operator system
% spanned by $n$ universal selfadjoint contractions, that is,
$$NC(n) = {\rm span}\{1,h_i : i = 1,\dots,n\}\subseteq C^*(\ast_{j=1}^n \bb{Z}_2),$$
where $h_1,\dots,h_n$ are the canonical generators of $\ast_{j=1}^n \bb{Z}_2$ 
(that is, $h_i$ is the non-trivial element of the $i$th copy of $\bb{Z}_2$
in $\ast_{j=1}^n \bb{Z}_2$, viewed as an element of $C^*(\ast_{j=1}^n \bb{Z}_2)$). 
The operator system $NC(n)$ is called the \emph{operator system of the noncommutative $n$-cube}. 

We note that $\cl S_n$ and $NC(n)$ are characterised by the following universal properties 
\cite{fkpt}, \cite{kavruk--paulsen--todorov--tomforde2010}:

\begin{proposition}\label{p_unip}
Let $\cl S$ be operator system. 
\begin{enumerate}
\item If $x_1,\dots,x_n\in \cl S$ are contractions then there exists a (unique) unital completely positive map 
$\nph : \cl S_n\to \cl S$ such that $\nph(u_i) = x_i$, $i = 1,\dots,n$.
\item If $x_1,\dots,x_n\in \cl S$ are selfadjoint contractions then there exists a (unique) unital completely positive map 
$\nph' : NC(n)\to \cl S$ such that $\nph'(h_i) = x_i$, $i = 1,\dots,n$.
\end{enumerate}
\end{proposition}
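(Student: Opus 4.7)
The plan is to use standard dilation theory plus the universal properties of $\bb F_n$ and $\ast_{j=1}^n\bb Z_2$. Both parts follow the same template: realise $\cl S\subseteq \cl B(\cl H)$ faithfully, dilate each $x_i$ to a unitary (resp.\ selfadjoint unitary) on a larger Hilbert space $\cl K$, invoke the universal property to produce a $*$-homomorphism out of the relevant group C*-algebra, and then compress back to $\cl H$. Uniqueness is automatic because $\cl S_n$ and $NC(n)$ are spanned by the identity and the generators, on which $\varphi$, $\varphi'$ are prescribed.

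For (1), given contractions $x_1,\dots,x_n\in\cl S\subseteq\cl B(\cl H)$, set $\cl K=\cl H\oplus\cl H$ and form the Halmos dilation
\[
U_i=\begin{pmatrix} x_i & (1-x_ix_i^*)^{1/2}\\ (1-x_i^*x_i)^{1/2} & -x_i^*\end{pmatrix}\in\cl B(\cl K),
\]
which is a unitary satisfying $P_{\cl H}U_i|_{\cl H}=x_i$. Because $\bb F_n$ is free, the map $u_i\mapsto U_i$ extends uniquely to a unital $*$-homomorphism $\pi:C^*(\bb F_n)\to\cl B(\cl K)$. The compression $V:\cl B(\cl K)\to\cl B(\cl H)$, $V(T)=P_{\cl H}T|_{\cl H}$, is unital completely positive, so $\varphi:=V\circ\pi|_{\cl S_n}$ is ucp; it sends $1\mapsto 1$, $u_i\mapsto x_i$, $u_i^*\mapsto x_i^*$, and therefore takes $\cl S_n$ into $\sspp\{1,x_i,x_i^*\}\subseteq\cl S$.

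For (2), given selfadjoint contractions $x_1,\dots,x_n\in\cl S$, use instead the symmetric dilation
\[
H_i=\begin{pmatrix} x_i & (1-x_i^2)^{1/2}\\ (1-x_i^2)^{1/2} & -x_i\end{pmatrix}\in\cl B(\cl K),
\]
which is both selfadjoint and a unitary (one checks directly that $H_i^2=I$), with $P_{\cl H}H_i|_{\cl H}=x_i$. Since $C^*(\ast_{j=1}^n\bb Z_2)=\ast_{j=1}^n C^*(\bb Z_2)$ is the free product of $n$ copies of $\cstar(\bb Z_2)$, and each factor is the universal unital C*-algebra generated by a selfadjoint unitary, the prescription $h_i\mapsto H_i$ extends uniquely to a unital $*$-homomorphism $\pi':C^*(\ast_{j=1}^n\bb Z_2)\to\cl B(\cl K)$. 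Composing with the same compression $V$ yields the desired ucp map $\varphi':=V\circ\pi'|_{NC(n)}$.

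There is no real obstacle: both constructions are one-line dilations, and the universal properties do all the work. The only point worth being careful about is that the compressed map lands inside $\cl S$ rather than merely inside $\cl B(\cl H)$, but this is immediate since $\cl S_n$ and $NC(n)$ are linearly spanned by elements whose images under the compression are explicitly $1$, $x_i$, and (in the first case) $x_i^*$, all of which lie in $\cl S$.
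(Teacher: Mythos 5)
Your proof is correct, and it is essentially the standard argument behind this result: the paper itself states Proposition \ref{p_unip} without proof, citing \cite{fkpt} and \cite{kavruk--paulsen--todorov--tomforde2010}, where the universal properties are obtained by exactly this route (Halmos-type dilation of the contractions to unitaries, respectively of the selfadjoint contractions to selfadjoint unitaries, followed by the universal property of $C^*(\bb F_n)$, respectively $C^*(\ast_{j=1}^n\bb Z_2)$, and compression). The only implicit step worth flagging is the intertwining identity $x_i(1-x_i^*x_i)^{1/2}=(1-x_ix_i^*)^{1/2}x_i$ needed to verify that $U_i$ is unitary, and the fact that uniqueness uses that unital completely positive maps are $*$-preserving, so $\nph(u_i^*)=x_i^*$ is forced; both are routine.
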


In fact, in \cite{fkpt}, $NC(n)$ is originally defined via this above universal property.

By \cite[Proposition 5.7]{fkpt}, the linear map $\psi : \cl S_n\to NC(n)$ given by $\psi(1) = 1$ and $\psi(u_i) = \psi(u_i^*) = h_i$,
$i = 1,\dots,n$, is a complete quotient map. 
Note that 
$$\ker\psi = \cl S_n^0 \stackrel{def}{=} \left\{\sum_{i=-n}^n \lambda_i u_i : \lambda_0 = 0, \lambda_i + \lambda_{-i} = 0, i = 1,\dots,n\right\}.$$

Let $\cl T_{n+1}$ be the tridiagonal operator system in $M_{n+1}$, that is, 
$\cl T_{n+1} = {\rm span}\{E_{i,j} : |i - j|\leq 1\}$, where 
$\{E_{i,j} \}_{i,j}$ denote the standard matrix unit system. 
By \cite[Theorem 4.2]{farenick--paulsen2011}, the linear map 
$\phi : \cl T_{n+1}\to \cl S_n$ given by $\phi(E_{i,j}) = \frac{1}{n+1} u_{j-i}$, is a complete quotient map.
Note that 
$$\ker\phi =  \left\{\sum_{i=1}^{n+1} \lambda_i E_{i,i} : \sum_{i=1}^{n+1} \lambda_i = 0\right\}.$$

The symbols $\phi$ and $\psi$ will be used 
to denote the maps introduced above. 

Let 
$$\K_{n+1} \stackrel{def}{=} \left\{\sum_{i=1}^{n+1} a_i E_{i,i} + \sum_{i=1}^n (b_i E_{i,i+1} - b_i E_{i+1,i}) : \sum_{i=1}^{n+1} a_i = 0\right\} \subseteq \cl T_{n+1}.$$

\begin{proposition}\label{p_quo}
The map $\rho \stackrel{def}{=} \psi\circ\phi : \cl T_{n+1}\to NC(n)$ is a complete quotient map with kernel $\K_{n+1}$. 
\end{proposition}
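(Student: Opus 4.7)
The plan has two parts. First, I would show that $\rho$ is a complete quotient map by arguing that the composition of two complete quotient maps is itself a complete quotient map, verifying the strongly-positive-preimage criterion of Proposition \ref{cqchar}: given any strongly positive $t \in M_m(NC(n))$, apply Proposition \ref{cqchar} to $\psi$ to obtain a strongly positive $s \in M_m(\cl S_n)$ with $\psi^{(m)}(s) = t$, then apply Proposition \ref{cqchar} to $\phi$ to obtain a strongly positive $r \in M_m(\cl T_{n+1})$ with $\phi^{(m)}(r) = s$; hence $\rho^{(m)}(r) = t$ and $r \gg 0$, as required. Since $\rho$ is clearly a unital completely positive surjection, this gives that $\rho$ is a complete quotient map.

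Second, I would identify the kernel. Since $\rho = \psi\circ\phi$ and $\psi$ is surjective, $\ker\rho = \phi^{-1}(\ker\psi) = \phi^{-1}(\cl S_n^0)$. I would write an arbitrary element of $\cl T_{n+1}$ in the form
$$x = \sum_{i=1}^{n+1} a_i E_{i,i} + \sum_{i=1}^n \bigl(b_i E_{i,i+1} + c_i E_{i+1,i}\bigr),$$
compute $\phi(x)$ via the defining formula, and express the result in the basis $\{u_k : -n\leq k\leq n\}$ of $\cl S_n$. The two defining conditions of $\cl S_n^0$, namely that the coefficient of $u_0$ vanish and that the coefficients of $u_k$ and $u_{-k}$ sum to zero for each $k$, then translate into $\sum_i a_i = 0$ and, term by term in $i$, $b_i + c_i = 0$. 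Setting $c_i = -b_i$ identifies $\phi^{-1}(\cl S_n^0)$ with $\K_{n+1}$.

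The main obstacle is the bookkeeping in the second part: one must carefully track how the entries $a_i, b_i, c_i$ contribute to which generator $u_k$ appearing in $\phi(x)$, and confirm that the pairing condition $\lambda_k + \lambda_{-k} = 0$ yields the \emph{individual} equations $b_i + c_i = 0$ rather than a single aggregate equation. Once this indexing convention is sorted out, the rest of the calculation is routine linear algebra.
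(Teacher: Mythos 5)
Your proposal matches the paper's proof essentially verbatim: the paper also establishes that $\rho$ is a complete quotient map by lifting a strongly positive element of $M_k(NC(n))$ first through $\psi$ and then through $\phi$, and it identifies the kernel by writing a general element $u = \sum_i a_i E_{i,i} + \sum_i (b_i E_{i,i+1} + c_i E_{i+1,i})$ and computing $\psi(\phi(u)) = \frac{1}{n+1}\sum_i a_i 1 + \frac{1}{n+1}\sum_i (b_i+c_i)h_i$, which vanishes exactly when $\sum_i a_i = 0$ and each $b_i + c_i = 0$. The indexing point you flag is resolved exactly as you expect (the superdiagonal entry $E_{i,i+1}$ and subdiagonal entry $E_{i+1,i}$ map to the $i$th generator and its adjoint, so one gets the individual equations, not an aggregate one), and your detour through $\phi^{-1}(\cl S_n^0)$ is the same calculation in different clothing.
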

\begin{proof}
Since $\phi$ and $\psi$ are complete quotient maps, their composition is also a complete quotient 
(indeed, if $X\in M_k(NC(n))$ is strongly positive then by \cite[Proposition 3.2]{farenick--kavruk--paulsen2011}
it has a strongly positive lifting $Y\in M_k(\cl S_n)$ and, by the same result, $Y$ has a strongly positive lifting in $M_k(\cl T_{n+1})$). 
The image of an element $u = \sum_{i=1}^{n+1} a_i E_{i,i} + \sum_{i=1}^n (b_i E_{i,i+1} + c_i E_{i+1,i})$ under 
$\psi\circ\phi$ is 
$$\psi(\phi(u)) = \frac{1}{n+1} \sum_{i=1}^{n+1} a_i 1 + \frac{1}{n+1}\sum_{i=1}^n (b_i + c_i)h_i;$$
thus, $\psi(\phi(u)) = 0$ precisely when $\sum_{i=1}^{n+1} a_i  = 0$ and $b_i + c_i = 0$, $i = 1,\dots,n$.
\end{proof}

\begin{theorem}\label{p_eqmma}
The following statements are equivalent for a C*-algebra $\cl A$:
\begin{enumerate}
\item\label{one} \ $NC(n)\otimes_{\min}\cl A = NC(n)\otimes_{\max} \cl A$;
\item\label{two} the map $\rho\otimes_{\min}\id : \cl T_{n+1}\otimes_{\min} \cl A\to NC(n)\otimes_{\min} \cl A$ is a complete quotient map.
\end{enumerate}
Moreover, if $n \geq 3$, then these statements are also equivalent to:
\begin{enumerate}
\item[{(3)}]\label{three} $\cl A$ possesses WEP. 
\end{enumerate}
\end{theorem}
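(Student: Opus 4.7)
My plan is to establish (1)$\Leftrightarrow$(2) using Lemma~\ref{l_fp} on a natural commutative square built from $\rho\otimes\id$, and then, for $n\geq 3$, to deduce (1)$\Leftrightarrow$(3) by passing between $NC(n)$ and its universal C*-algebra $C^*(\ast_{j=1}^n\bb{Z}_2)$.

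For (1)$\Leftrightarrow$(2), I would form the square whose top row is $\rho\otimes\id:\cl T_{n+1}\otimes_{\min}\cl A\to NC(n)\otimes_{\min}\cl A$, whose bottom row is the analogous map at the max level, and whose vertical arrows are the identity maps at the algebraic level, viewed as going from min to max. In the notation of Lemma~\ref{l_fp}, set $\psi=\rho\otimes_{\min}\id$, $\nu=\rho\otimes_{\max}\id$, and let $\mu,\theta$ be the respective identities. The hypotheses reduce to three checks: $(i)$ $\nu$ is a complete quotient, which follows from Proposition~\ref{p_quo} together with the functoriality of $\otimes_{\max}$ on complete quotients (a standard fact from \cite{kavruk--paulsen--todorov--tomforde2011}); $(ii)$ $\theta^{-1}$ is completely positive, immediate since $\max\subseteq_+\min$; and $(iii)$ $\mu$ is a complete order isomorphism, which is exactly the $(\min,\max)$-nuclearity of $\cl T_{n+1}$ when tensored with $\cl A$, a fact from the prior work on tridiagonal operator systems \cite{farenick--paulsen2011}. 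Granting these, Lemma~\ref{l_fp} yields that $\psi$ is a complete quotient precisely when $\theta$ is a complete order isomorphism, which is the equivalence (2)$\Leftrightarrow$(1).

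For (1)$\Leftrightarrow$(3) when $n\geq 3$, I would first identify $C^*_u(NC(n))$ with $C^*(\ast_{j=1}^n\bb{Z}_2)$ via Proposition~\ref{p_unip}(2) together with the free-product structure of the group. Since $\min$ is injective on operator subsystems and $\otimes_{\max}$ respects the inclusion of an operator system into its universal C*-algebra, one has the coi embeddings $NC(n)\otimes_\sigma\cl A\coisubset C^*(\ast_{j=1}^n\bb{Z}_2)\otimes_\sigma\cl A$ for $\sigma\in\{\min,\max\}$. The direction (3)$\Rightarrow$(1) is then immediate: by Example~\ref{p_dg_eg}, WEP of $\cl A$ gives $C^*(\ast_{j=1}^n\bb{Z}_2)\otimes_{\min}\cl A=C^*(\ast_{j=1}^n\bb{Z}_2)\otimes_{\max}\cl A$, which restricts to the subsystem to produce (1). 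For (1)$\Rightarrow$(3), I would invoke the lifting property of $C^*(\ast_{j=1}^n\bb{Z}_2)$ (Example~\ref{lp groups}) to propagate the min-max equality from the generating finite-dimensional $NC(n)$ up to the full C*-algebra; this propagation uses the lifting machinery in \cite{kavruk2012,kavruk--paulsen--todorov--tomforde2010}. Applying Example~\ref{p_dg_eg} once more (the group $\ast_{j=1}^n\bb{Z}_2$ characterises WEP for $n\geq 3$) then yields (3).

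The main obstacles I foresee are twofold. First, the $(\min,\max)$-nuclearity of $\cl T_{n+1}$ needed in step $(iii)$ is not transparent from the description of $\cl T_{n+1}$ as a subspace of $M_{n+1}$, so the crucial technical input for (1)$\Leftrightarrow$(2) is to locate and cite this fact precisely in \cite{farenick--paulsen2011}. Second, the LLP-based propagation of min-max equality from $NC(n)$ up to $C^*(\ast_{j=1}^n\bb{Z}_2)$ in the forward direction of (1)$\Rightarrow$(3) requires a careful invocation of the lifting results from earlier operator system literature; one must verify that the universal property of $NC(n)$ and the LP of the group C*-algebra together supply the correct hypotheses.
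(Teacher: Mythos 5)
Your proof of (1)$\Leftrightarrow$(2) is essentially the paper's: the same application of Lemma~\ref{l_fp}, with $\rho\otimes_{\max}\id$ a complete quotient by functoriality of $\max$ under quotient maps, and with the key technical input being that the identity $\cl T_{n+1}\otimes_{\max}\cl A\to\cl T_{n+1}\otimes_{\min}\cl A$ is a complete order isomorphism (which the paper gets from chordality, i.e.\ $(\min,{\rm c})$-nuclearity of $\cl T_{n+1}$, combined with ${\rm c}=\max$ against C*-algebras). That part is sound.

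The gap is in (1)$\Leftrightarrow$(3). You identify $C^*_u(NC(n))$ with $\cl B:=C^*(\ast_{j=1}^n\bb{Z}_2)$ and then invoke the principle that $\otimes_{\max}$ respects the inclusion of an operator system into its \emph{universal} C*-algebra to get $NC(n)\otimes_{\max}\cl A\coisubset\cl B\otimes_{\max}\cl A$. But $\cl B$ is not $C^*_u(NC(n))$: by Proposition~\ref{p_unip}(2) a unital completely positive map on $NC(n)$ may send the $h_i$ to arbitrary selfadjoint \emph{contractions}, whereas a $*$-homomorphism of $\cl B$ must send them to selfadjoint \emph{unitaries}, so ucp maps of $NC(n)$ do not extend to $*$-homomorphisms of $\cl B$ (already for $n=1$ one has $C^*_u(\bb{C}^2)\neq\bb{C}^2$). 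The algebra $\cl B$ is the C*-\emph{envelope} of $NC(n)$, and for a C*-envelope the map $\cl S\otimes_{\max}\cl A\to C^*_{\rm e}(\cl S)\otimes_{\max}\cl A$ is in general only completely positive, not a complete order embedding --- the possible failure of such embeddings is precisely the phenomenon this circle of results probes. The embedding you need is true, but it is a genuine theorem (\cite[Lemma 6.2]{fkpt}, proved via Boca's free-product theorem), not a formal consequence of universality; as written, your (3)$\Rightarrow$(1) rests on an incorrectly justified step. Likewise, in (1)$\Rightarrow$(3) the lifting property of $\cl B$ is not what propagates the min--max identity from $NC(n)$ up to $\cl B$; the paper instead uses that $\cl B=C^*_{\rm e}(NC(n))$ is generated by $NC(n)$ together with \cite[Proposition 9.5]{kavruk--paulsen--todorov--tomforde2010}, which upgrades complete positivity of $NC(n)\otimes_{\min}\cl A\to\cl B\otimes_{\max}\cl A$ to complete positivity of $\cl B\otimes_{\min}\cl A\to\cl B\otimes_{\max}\cl A$, after which Proposition~\ref{p_dg} applies. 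Both appeals in your sketch should be replaced by these specific results.
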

\begin{proof}
The map 
$\rho\otimes_{\min}\id$ 
is completely positive by the functoriality of $\min$. 
On the other hand, 
$\rho\otimes_{\max}\id : \cl T_{n+1}\otimes_{\max} \cl A\to NC(n)\otimes_{\max} \cl A$ 
is a complete quotient map by \cite[Proposition 1.6]{farenick--paulsen2011}. 
By \cite[Proposition 4.1]{farenick--paulsen2011}, 
and the fact that $\cl T_{n+1}\otimes_{\rm c}\cl A = \cl T_{n+1}\otimes_{\max}\cl A$ 
(see \cite[Proposition 6.7]{kavruk--paulsen--todorov--tomforde2011}),
the canonical map 
$\cl T_{n+1}\otimes_{\max}\cl A \to\cl T_{n+1}\otimes_{\min}\cl A$ is 
a complete order isomorphism. 
It follows from Lemma \ref{l_fp} that \eqref{one} and \eqref{two} are equivalent.

Suppose $n\geq  3$ and set $\cl B = C^*(\ast_{j=1}^n\bb{Z}_2)$. 
Assume (3) holds.
The group $\ast_{j=1}^n\bb{Z}_2$ contains $\bb{F}_2$ (see, for example, \cite{fritz}) and 
hence, by Proposition \ref{p_dg} and Example \ref{lp groups} (3),
%\marginpar{\tiny ref. to Example added} 
$\cl B\otimes_{\min} \cl A = \cl B\otimes_{\max}\cl A$. 
By the injectivity of $\min$, we have $NC(n)\otimes_{\min} \cl A\coisubset \cl B\otimes_{\min} \cl A$, 
and by \cite[Lemma 6.2]{fkpt}, 
$NC(n)\otimes_{\max} \cl A\coisubset \cl B\otimes_{\max} \cl A$. 
It now follows that $NC(n)\otimes_{\min}\cl A = NC(n)\otimes_{\max}\cl A$. 

Finally, assume \eqref{one}.
By \cite[Proposition 2.2]{fkpt},
$\cl B = C_e^*(NC(n))$. 
The natural inclusion of vector spaces 
$NC(n)\otimes_{\min}\cl A\to \cl B\otimes_{\max}\cl A$ is completely positive as it is 
the composition of the completely positive maps
$NC(n)\otimes_{\min}\cl A\to NC(n)\otimes_{\max}\cl A$ and 
$NC(n)\otimes_{\max}\cl A\to \cl B\otimes_{\max}\cl A$. 
It follows from 
\cite[Proposition 9.5]{kavruk--paulsen--todorov--tomforde2010} that 
the natural map $\cl B\otimes_{\min}\cl A\to \cl B\otimes_{\max}\cl A$ is completely 
positive and hence 
$\cl B\otimes_{\min}\cl A = \cl B\otimes_{\max}\cl A$.
%Since $\ast_{j=1}^n \bb{Z}_2$ contains $\bb{F}_2$, 
Proposition \ref{p_dg} now shows that $\cl A$ has WEP. 
\end{proof}

\begin{corollary}\label{p_llp}
The operator system $NC(n)$ has the 
lifting property for every $n\in \bb{N}$ 
and $NC(n)\otimes_{\min}\cl B(H) = NC(n)\otimes_{\max}\cl B(H)$ for every $n\in \bb{N}$ and
every Hilbert space $H$.
\end{corollary}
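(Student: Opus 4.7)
The plan is to treat the two assertions in sequence, with the first feeding directly into the second. For the lifting property, let $\cl B$ be a unital C*-algebra, $\cl J\subseteq\cl B$ a closed ideal, and $\phi:NC(n)\to\cl B/\cl J$ a unital completely positive map. Each image $\phi(h_i)$ is a self-adjoint contraction in $\cl B/\cl J$, and a standard continuous functional calculus argument produces a self-adjoint contraction $b_i\in\cl B$ with $q_{\cl J}(b_i)=\phi(h_i)$: take any self-adjoint preimage and apply the truncation $t\mapsto\max(-1,\min(1,t))$. Proposition~\ref{p_unip}(2) then yields a unique unital completely positive map $\tilde\phi:NC(n)\to\cl B$ with $\tilde\phi(h_i)=b_i$, and by construction $q_{\cl J}\circ\tilde\phi$ agrees with $\phi$ on the generating set $\{1,h_1,\dots,h_n\}$, hence on all of $NC(n)$. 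This is the same strategy used for $C^*(\bb{Z}_2)$ in the proof of Example~\ref{lp groups}.

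For the tensor identity, I would invoke the operator-system analogue of Kirchberg's characterisation of the local lifting property established in \cite{kavruk--paulsen--todorov--tomforde2010}: an operator system $\cl S$ has operator-system LLP if and only if $\cl S\otimes_{\min}\cl B(H)=\cl S\otimes_{\max}\cl B(H)$ for every Hilbert space $H$. Since LP trivially implies LLP, Part~1 delivers the desired identity immediately. As a cross-check for $n\geq 3$, one may instead appeal directly to Theorem~\ref{p_eqmma}: since $\cl B(H)$ possesses WEP (the identity map is a weak expectation), condition~(3) of that theorem holds and so forces condition~(1).

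The main subtle point is matching the formal operator-system notion of LLP in \cite{kavruk--paulsen--todorov--tomforde2010} to the notion of LP used here, so that the min-max-with-$\cl B(H)$ equivalence cited above can actually be applied; the LP step itself is essentially automatic from the universal property plus the familiar lifting of self-adjoint contractions. Should that equivalence not be available in exactly the form wanted, one can fall back on Theorem~\ref{p_eqmma} for $n\geq 3$ and dispose of $n=1,2$ by hand, using the nuclearity of $C^*(\bb{Z}_2)=\mathbb{C}^2$ and of $C^*(\bb{Z}_2\ast\bb{Z}_2)=C^*(D_\infty)$ (the infinite dihedral group being amenable), combined with the inclusion $NC(n)\otimes_{\max}\cl A\coisubset C^*(\ast_{j=1}^n\bb{Z}_2)\otimes_{\max}\cl A$ provided by \cite{fkpt}.
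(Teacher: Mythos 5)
Your proof of the lifting property is exactly the paper's: lift each $\phi(h_i)$ to a hermitian contraction in $\cl B$ and invoke the universal property of $NC(n)$ (Proposition~\ref{p_unip}(2)); the truncation $t\mapsto\max(-1,\min(1,t))$ is the standard way to make the lift contractive, and since that function fixes $[-1,1]$ it does not disturb the image in the quotient, so this step is fine. For the tensor identity the paper's proof at this point is the one you list as a ``cross-check'': cite Theorem~\ref{p_eqmma} together with the fact that $\cl B(H)$ has WEP. Your primary route --- LP $\Rightarrow$ LLP $\Rightarrow$ $\min=\max$ against $\cl B(H)$ via the operator-system LLP criterion of \cite{kavruk--paulsen--todorov--tomforde2010} --- is precisely the argument the authors themselves use in their \emph{second} proof of this corollary in Section~5 (where they cite \cite[Theorem 8.5]{kavruk--paulsen--todorov--tomforde2010}), so both of your routes are legitimate and both appear in the paper. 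A small point in your favour: the WEP equivalence in Theorem~\ref{p_eqmma} is only asserted for $n\geq 3$, so the paper's one-line citation is strictly speaking incomplete for $n=1,2$; your fallback via the $(\min,{\rm c})$-nuclearity of $NC(1)$ and $NC(2)$ (equivalently, nuclearity of $\mathbb{C}^2$ and $C^*(D_\infty)$ combined with the coi inclusions into the $\max$ tensor product from \cite{fkpt}) correctly closes that gap. No errors to report.
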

\begin{proof} If $\varphi:NC(n) \to \cl B/\cl J$ is a unital completely positive map, then the images of the generators of $NC(n)$ are hermitian contractions in $\cl B/\cl J.$ 
But each hermitian contraction in $\cl B/\cl J$ can be lifted to a hermitian contraction in $\cl B$ and these elements induce a unital completely positive lifting of $\phi$ by 
Proposition~\ref{p_unip}(2). 
%If $n = 1$, the statement follows from the fact that $NC(1) = \ell^{\infty}_2$ and is, he%nce, nuclear. 
%If $n = 2$, the statement follows from \cite[Theorem 6.3]{fkpt}. 
The tensor equality follows from Theorem \ref{p_eqmma} and the fact that $\cl B(H)$ possesses WEP.
\end{proof}
%\marginpar{\tiny changed statement and proof}

\begin{corollary}\label{c_nc2}
The map $\rho\otimes_{\min}\id : \cl T_3\otimes_{\min}\cl A\to NC(2)\otimes_{\min}\cl A$ is a complete quotient map for every unital C$^*$-algebra $\cl A$. 
Hence, if $A_0,A_1,A_2\in M_k(\cl A)$ are such that $1\otimes A_0 + h_1\otimes A_1 + h_2\otimes A_2$ is strongly positive in $NC(2)\otimes_{\min}M_k(\cl A)$, then
there exist elements $A,B,C,X,Y\in M_k(\cl A)$ with 
$A+B+C = A_0$, $X + X^* = A_1$, $Y + Y^* = A_2$ such that the matrix 
$$\left(\begin{matrix} A & X & 0\\ X^* &  B & Y\\ 0 & Y^* & C\end{matrix}\right)$$ is strongly positive in $M_{3k}(\cl A)$. 
\end{corollary}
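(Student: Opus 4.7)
The first claim will follow from Theorem \ref{p_eqmma}, whose equivalence \eqref{one} $\Leftrightarrow$ \eqref{two} is established for every $n \in \bb{N}$; thus I only need to verify condition \eqref{one} for $n = 2$, namely that $NC(2)\omin\cl A = NC(2)\omax\cl A$ for every unital C*-algebra $\cl A$. The argument for $n \geq 3$ in Theorem \ref{p_eqmma} used the inclusion $\bb{F}_2 \hookrightarrow \ast_{j=1}^n \bb{Z}_2$, which fails here; I replace it by the observation that $\bb{Z}_2 \ast \bb{Z}_2$ is isomorphic to the infinite dihedral group, hence amenable, so $\cl B := C^*(\bb{Z}_2 \ast \bb{Z}_2)$ is nuclear and $\cl B \omin \cl A = \cl B \omax \cl A$. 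By injectivity of $\min$, $NC(2) \omin \cl A \coisubset \cl B \omin \cl A$, and by \cite[Lemma 6.2]{fkpt}, $NC(2) \omax \cl A \coisubset \cl B \omax \cl A$; composing these embeddings with the ambient equality yields $NC(2) \omin \cl A = NC(2) \omax \cl A$, as required.

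For the second claim, identifying $NC(2) \omin M_k(\cl A)$ with $M_k(NC(2) \omin \cl A)$ makes $1 \otimes A_0 + h_1 \otimes A_1 + h_2 \otimes A_2$ a strongly positive element of the latter. Applying Proposition \ref{cqchar} to the complete quotient map $\rho \omin \id$ furnished by the first claim, I obtain a strongly positive preimage $w \in \cl T_3 \omin M_k(\cl A)$. Self-adjointness of $w$ combined with the tridiagonal structure of $\cl T_3$ forces
$$w = \begin{pmatrix} A' & X' & 0 \\ (X')^* & B' & Y' \\ 0 & (Y')^* & C' \end{pmatrix}$$
with $A', B', C' \in M_k(\cl A)$ self-adjoint and $X', Y' \in M_k(\cl A)$. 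The image formula from Proposition \ref{p_quo} gives
$$(\rho \otimes \id)(w) = 1 \otimes \tfrac{A' + B' + C'}{3} + h_1 \otimes \tfrac{X' + (X')^*}{3} + h_2 \otimes \tfrac{Y' + (Y')^*}{3},$$
and linear independence of $\{1, h_1, h_2\}$ in $NC(2)$ forces coefficient matching: $A' + B' + C' = 3A_0$, $X' + (X')^* = 3A_1$, and $Y' + (Y')^* = 3A_2$. Setting $A = A'/3$, $B = B'/3$, $C = C'/3$, $X = X'/3$, $Y = Y'/3$ preserves strong positivity and produces the required elements.

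The only non-routine step is establishing $NC(2) \omin \cl A = NC(2) \omax \cl A$; the rest is a direct unpacking of the complete quotient property together with a tensor-decomposition computation. Substituting amenability of $\bb{Z}_2 \ast \bb{Z}_2$ for the $\bb{F}_2$-containment argument is the essential new ingredient that distinguishes $n = 2$ from $n \geq 3$.
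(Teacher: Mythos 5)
Your proof is correct, and the second half (lifting the strongly positive element through $\rho\otimes_{\min}\id$, matching coefficients against the image formula of Proposition \ref{p_quo}, and rescaling by $\tfrac13$) is essentially identical to the paper's argument. Where you diverge is in establishing the key identity $NC(2)\omin\cl A = NC(2)\omax\cl A$: the paper invokes the $(\min,{\rm c})$-nuclearity of the operator system $NC(2)$ (\cite[Theorem 6.3]{fkpt} together with \cite[Proposition 6.7]{kavruk--paulsen--todorov--tomforde2011}), whereas you deduce it from the nuclearity of the ambient C*-algebra $\cl B = C^*(\bb{Z}_2\ast\bb{Z}_2)$ --- valid because $\bb{Z}_2\ast\bb{Z}_2$ is the infinite dihedral group, hence virtually abelian and amenable --- and then descend to $NC(2)$ via the injectivity of $\min$ and \cite[Lemma 6.2]{fkpt}, exactly mirroring the $(3)\Rightarrow(1)$ step of Theorem \ref{p_eqmma}. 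Your route is more self-contained in that it replaces a cited nuclearity result for the operator system by a classical group-theoretic fact plus embedding lemmas already used elsewhere in the paper; the paper's route is shorter but outsources the substance to \cite{fkpt}. Both correctly isolate why $n=2$ behaves differently from $n\geq 3$ (amenability versus containment of $\bb{F}_2$).
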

\begin{proof}
By \cite[Theorem 6.3]{fkpt}, $NC(2)$ is $(\min,\comm)$-nuclear and now
\cite[Proposition 6.7]{kavruk--paulsen--todorov--tomforde2011}
shows that $NC(2)\otimes_{\min}\cl A = NC(2)\otimes_{\max}\cl A$. 
By Theorem \ref{p_eqmma}, 
$\rho\otimes_{\min}\id : \cl T_{3}\otimes_{\min} \cl A\to NC(2)\otimes_{\min} \cl A$ is a complete quotient map. 
Hence, if $u = 1\otimes A_0 + h_1\otimes A_1 + h_2\otimes A_2$ is strongly positive in $NC(2)\otimes_{\min}M_k(\cl A)$, then
\cite[Proposition 3.2]{farenick--kavruk--paulsen2011} and Proposition~\ref{cqchar} implies that
there exist $A,B,C,X,Y\in M_k(\cl A)$ such that 
$$v = E_{1,1}\otimes A + E_{2,2}\otimes B + E_{3,3}\otimes C + E_{1,2}\otimes X + E_{2,1}\otimes X^* 
+ E_{2,3}\otimes Y + E_{3,2}\otimes Y^*$$ is strongly positive in $\cl T_3\otimes_{\min}\cl A$ and 
$$u = (\rho\otimes\id)^{(k)} (v) = \frac{1}{3} (1\otimes (A+B+C) + h_1\otimes(X + X^*) + h_2\otimes (Y + Y^*)).$$
It follows that $\frac{1}{3}(A+B+C) = A_0$, $\frac{1}{3}(X + X^*) = A_1$, $\frac{1}{3}(Y + Y^*) = A_2$. 
Rescaling $A,B,C,X$ and $Y$ by a factor of $\frac{1}{3}$ shows the claim. 
\end{proof}

\begin{corollary}\label{c_nc3}
The following statements are equivalent for a unital C$^*$-algebra $\cl A$:
\begin{enumerate}
\item\label{c_nc3-1} $\cl A$ has WEP;
\item\label{c_nc3-2} 
whenever $A_0,A_1,A_2,A_3\in M_k(\cl A)$ are such that 
$1\otimes A_0 + h_1\otimes A_1 + h_2\otimes A_2 + h_3\otimes A_3$ is strongly positive in $NC(3)\otimes_{\min}M_k(\cl A)$, then
there exist elements $A,B,C,D,X,Y,Z\in M_k(\cl A)$ with 
$A+B+C + D = A_0$, $X + X^* = A_1$, $Y + Y^* = A_2$ and $Z + Z^* = A_3$ 
such that the matrix 
$$\left(\begin{matrix} 
A & X & 0 & 0\\ 
X^* &  B & Y & 0\\ 
0 & Y^* & C & Z\\
0 & 0 & Z^* & D
\end{matrix}\right)$$ is strongly positive in $M_{4k}(\cl A)$. 
\end{enumerate}
\end{corollary}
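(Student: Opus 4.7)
The plan is to reproduce the argument of Corollary~\ref{c_nc2} with $n=2$ replaced by $n=3$, taking advantage of the fact that for $n=3$ Theorem~\ref{p_eqmma} supplies a \emph{third} equivalent condition, namely WEP of $\cl A$.

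First, I would invoke Theorem~\ref{p_eqmma} at $n=3$: since $n\geq 3$, statements (1)--(3) there are all equivalent, so $\cl A$ has WEP if and only if $\rho\otimes_{\min}\id :\cl T_4\otimes_{\min}\cl A\to NC(3)\otimes_{\min}\cl A$ is a complete quotient map. Next I would apply Proposition~\ref{cqchar} to translate the complete quotient statement for $\rho\otimes_{\min}\id$ at matricial level $k$ into the statement that every strongly positive element of $M_k(NC(3)\otimes_{\min}\cl A)$ admits a strongly positive preimage in $M_k(\cl T_4\otimes_{\min}\cl A)$, for every $k\in\bb{N}$.

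Then I would make each side concrete. Identifying $M_k(NC(3)\otimes_{\min}\cl A)$ with $NC(3)\otimes_{\min}M_k(\cl A)$, a typical element takes the form $u=1\otimes A_0 + h_1\otimes A_1 + h_2\otimes A_2 + h_3\otimes A_3$ with $A_i\in M_k(\cl A)$. On the $\cl T_4$ side, an element of $M_k(\cl T_4\otimes_{\min}\cl A)$ corresponds to a $4\times 4$ tridiagonal matrix with entries in $M_k(\cl A)$, and its strong positivity in the tensor product coincides with strong positivity as an element of $M_{4k}(\cl A)$. Since strongly positive elements are automatically self-adjoint, the $(j+1,j)$-entry of such a preimage must be the adjoint of the $(j,j+1)$-entry, so the preimage is forced into the tridiagonal shape displayed in~(2). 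Using the formula for $\rho=\psi\circ\phi$ recorded in Proposition~\ref{p_quo}, a short computation gives
$$(\rho\otimes\id)^{(k)}(v) = \tfrac{1}{4}\bigl(1\otimes(A+B+C+D) + h_1\otimes(X+X^*) + h_2\otimes(Y+Y^*) + h_3\otimes(Z+Z^*)\bigr).$$
Matching coefficients with $u$ and rescaling $A,B,C,D,X,Y,Z$ by the factor $\tfrac14$ (exactly as in the proof of Corollary~\ref{c_nc2}) converts the strongly positive lifting property into the decomposition stated in~(2), and conversely any such decomposition produces a strongly positive tridiagonal lift of $u$.

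There is no substantive obstacle: the entire content of the corollary is carried by Theorem~\ref{p_eqmma} (at $n=3$) together with Proposition~\ref{cqchar}, and the remaining work is bookkeeping. The only technical point worth flagging is that strong positivity of the preimage automatically delivers the self-adjoint (``$X^*$ across the diagonal'') form; without this observation one might fear that the lift produces arbitrary, unrelated off-diagonal entries.
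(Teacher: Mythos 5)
Your proposal is correct and follows exactly the paper's route: the paper's proof likewise reduces condition (2) to the statement that $\rho\otimes_{\min}\id : \cl T_4\otimes_{\min}\cl A\to NC(3)\otimes_{\min}\cl A$ is a complete quotient map (by the same lifting/coefficient-matching bookkeeping as in Corollary \ref{c_nc2}) and then cites Theorem \ref{p_eqmma} with $n=3$ to identify that condition with WEP. Your remark that strong positivity forces the lift to be self-adjoint, so the off-diagonal entries pair up as $X,X^*$, is the same observation implicitly used in the proof of Corollary \ref{c_nc2}.
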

\begin{proof}
As in the proof of Corollary \ref{c_nc2}, one can see that \eqref{c_nc3-2}  is equivalent to the 
canonical map 
$\rho\otimes_{\min}\id : \cl T_4\otimes_{\min} \cl A \to NC(3)\otimes_{\min}\cl A$ 
being a complete quotient map.
By Theorem \ref{p_eqmma}, the latter condition is equivalent to 
$\cl A$ having WEP.
\end{proof}

We next include a characterisation of WEP in terms of liftings of strongly positive elements. 
We recall that the numerical radius $w(x)$ of an element $x$ of an operator system $\cl S$
is given by $w(x) = \sup\{|f(x)| : f \mbox{ a state of }\cl S\}$.

\begin{lemma}\label{l_dsu}
If $\cl S,\cl T$ and $\cl R$ are operator systems and $\tau\in \{\min,\comm\}$, then
%\marginpar{\tiny statement of lemma changed}
$((\cl S\oplus\cl T)\otimes_{\tau}\cl R)_+ = ((\cl S\otimes_{\tau}\cl R)\oplus (\cl T\otimes_{\tau}\cl R))_+$.
\end{lemma}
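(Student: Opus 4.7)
The plan is to identify $(\cl S\oplus\cl T)\otimes\cl R$ with $(\cl S\otimes\cl R)\oplus(\cl T\otimes\cl R)$ via the canonical linear isomorphism sending $(s\oplus t)\otimes r$ to $(s\otimes r)\oplus(t\otimes r)$, and then to argue that this identification is a complete order isomorphism for both $\tau=\min$ and $\tau=\comm$. Under this identification, every element of $(\cl S\oplus\cl T)\otimes\cl R$ is written uniquely as $X_1\oplus X_2$ with $X_1\in\cl S\otimes\cl R$ and $X_2\in\cl T\otimes\cl R$, so the claim reduces to showing that $X_1\oplus X_2$ is positive in $(\cl S\oplus\cl T)\otimes_\tau\cl R$ iff $X_1\geq 0$ in $\cl S\otimes_\tau\cl R$ and $X_2\geq 0$ in $\cl T\otimes_\tau\cl R$.

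For $\tau=\min$, I would fix faithful representations $\cl S\subseteq\cl B(H_1)$, $\cl T\subseteq\cl B(H_2)$, $\cl R\subseteq\cl B(K)$. The direct sum $\cl S\oplus\cl T$ embeds into $\cl B(H_1\oplus H_2)$ in the block-diagonal fashion, so $(\cl S\oplus\cl T)\otimes_{\min}\cl R$ embeds into $\cl B((H_1\oplus H_2)\otimes K)$, which decomposes as $\cl B((H_1\otimes K)\oplus(H_2\otimes K))$. Tracking an elementary tensor $(s\oplus t)\otimes r$ through this chain yields precisely the block-diagonal element with entries $s\otimes r$ and $t\otimes r$, showing that the canonical isomorphism is a complete order isomorphism onto the block-diagonal copy of $(\cl S\otimes_{\min}\cl R)\oplus(\cl T\otimes_{\min}\cl R)$. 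The cone identity for $\min$ follows.

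For $\tau=\comm$, I would use the defining property directly. For the forward direction, suppose $X_1\oplus X_2\geq 0$ in $(\cl S\oplus\cl T)\oc\cl R$, and fix completely positive maps $\phi_1:\cl S\to\cl B(H)$ and $\psi:\cl R\to\cl B(H)$ with commuting ranges. Define $\phi:\cl S\oplus\cl T\to\cl B(H)$ by $\phi(s\oplus t)=\phi_1(s)$; this is the composition of the cp projection $\cl S\oplus\cl T\to\cl S$ with $\phi_1$, hence cp, and its range still commutes with $\psi(\cl R)$. A direct computation on elementary tensors gives $(\phi\cdot\psi)(X_1\oplus X_2)=(\phi_1\cdot\psi)(X_1)$, so $(\phi_1\cdot\psi)(X_1)\geq 0$. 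Since $\phi_1,\psi$ were arbitrary, $X_1\geq 0$ in $\cl S\oc\cl R$, and symmetrically for $X_2$. For the reverse direction, any cp $\phi:\cl S\oplus\cl T\to\cl B(H)$ decomposes as $\phi=\phi_1+\phi_2$ with $\phi_i$ the restriction to the corresponding summand (each cp, each with range commuting with $\psi(\cl R)$), and $(\phi\cdot\psi)(X_1\oplus X_2)=(\phi_1\cdot\psi)(X_1)+(\phi_2\cdot\psi)(X_2)\geq 0$.

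The only substantive point is to verify that the restrictions of a cp map on $\cl S\oplus\cl T$ to the two summands are themselves cp with ranges commuting with $\psi(\cl R)$; this is immediate because the canonical inclusions $\cl S\hookrightarrow\cl S\oplus\cl T$ and $\cl T\hookrightarrow\cl S\oplus\cl T$ are cp. No further obstacle is expected, and the same argument could in fact be pushed through at the matricial level by applying it to $M_n(\cl S)\oplus M_n(\cl T)\cong M_n(\cl S\oplus\cl T)$, should that strengthening be needed elsewhere.
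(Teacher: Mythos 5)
Your proof is correct and follows essentially the same route as the paper: for the commuting tensor product you test against the same two families of maps (restrictions $f|_{\cl S}$, $f|_{\cl T}$ of a cp map on the direct sum for one inclusion, and the extension $f((x,y))=f_1(x)$ of a cp map on a summand for the other). The only cosmetic difference is that for $\min$ you verify the block-diagonal identification by hand with concrete representations, whereas the paper simply invokes the injectivity of $\min$; both amount to the same observation.
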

\begin{proof}
It is clear that there is a linear identification 
$\iota : (\cl S\oplus\cl T)\otimes\cl R \to (\cl S\otimes\cl R)\oplus (\cl T\otimes\cl R)$.
Suppose that $u\in ((\cl S\otimes_{\comm}\cl R)\oplus (\cl T\otimes_{\comm}\cl R))_+$, 
and write $u = (u_1,u_2)$, with 
$u_1\in (\cl S\otimes_{\comm}\cl R)_+$ and $u_2\in (\cl T\otimes_{\comm}\cl R)_+$. 
Let $f : \cl S\oplus \cl T\to \cl B(H)$ and $g : \cl R\to \cl B(H)$ 
be completely positive maps with commuting ranges. 
Let $f_1 = f|_{\cl S}$ and $f_2 = f|_{\cl T}$. Then $f_1$ and $f_2$ are 
completely positive and hence 
$f_1\cdot g(u_1) \geq 0$ and $f_2\cdot g(u_2) \geq 0$. 
But then $f\cdot g(\iota^{-1}(u)) = f_1\cdot g(u_1) + f_2\cdot g(u_2) \geq 0$. 
%Repeating the same argument for 
%$M_k(\cl R)$ in the place of $\cl R$, we see that 
%$(\cl S\otimes_{{\rm c}}\cl R)\oplus (\cl T\otimes_{\rm c}\cl R)\subseteq_+ (\cl S\oplus\cl T)\otimes_{\rm c}\cl R$.

Conversely, assume that $u\in ((\cl S\oplus\cl T)\otimes_{\comm}\cl R)_+$. Write $\iota(u) = (u_1,u_2)$. 
If $f_1 : \cl S\to \cl B(H)$ and $g : \cl R\to \cl B(H)$ are completely positive maps with commuting ranges, 
then the map $f : \cl S\oplus \cl T\to \cl B(H)$ given by $f((x,y)) = f_1(x)$ is completely positive and 
hence $f_1\cdot g(u_1) = f\cdot g(u) \geq 0$. Thus, $u_1\in (\cl S\otimes_{\comm}\cl R)_+$; 
similarly, $u_2\in (\cl T\otimes_{\comm}\cl R)_+$. 
%Repeating the arguments for
%$M_k(\cl R)$ in the place of $\cl R$, we now conclude that
%$(\cl S\oplus\cl T)\otimes_{\comm}\cl R = (\cl S\otimes_{\comm}\cl R)\oplus (\cl T\otimes_{\comm}\cl R)$.

The statement regarding $\min$ is immediate from the 
injectivity of this tensor product.
\end{proof}

\begin{theorem}\label{th_st}
The following statements are equivalent for a unital C$^*$-algebra $\cl A$:
\begin{enumerate}
\item\label{th_st-1} $\cl A$ has WEP;
\item\label{th_st-2} whenever 
$$X = 1\otimes A_0 + u_1\otimes A_1 + u_1^*\otimes A_1^* + u_2\otimes A_2 + u_2^*\otimes A_2^*$$ is a strongly positive 
element of $M_k(\cl S_2\otimes_{\min}\cl A)$, where $A_0,A_1,A_2\in M_k(\cl A)$, there exist strongly positive elements $B,C\in M_k(\cl A)$
such that 
$$A_0 = \frac{1}{2}(B + C), \ \ w(B^{-\frac{1}{2}}A_1B^{-\frac{1}{2}}) < \frac{1}{2} \ \ \mbox{ and } \ \ 
w(C^{-\frac{1}{2}}A_2C^{-\frac{1}{2}}) < \frac{1}{2}.$$
%$$1\otimes B + \zeta\otimes A_1 + \bar{\zeta}\otimes A_1^* \ \ \mbox{ and } \ \ 1\otimes C + \zeta\otimes A_2 + \bar{\zeta}\otimes A_2^*$$
%are both positive in $M_k(\cl S_1\otimes$
\item\label{th_st-3} whenever $A_1, A_2 \in M_k(\cl A)$ satisfy $w(A_1,A_2) < 1/2$ then there exist positive invertible elements $B,C \in M_k(\cl A)$ such that 
$$\frac{1}{2}(B+C)=I, \, w(B^{-\frac{1}{2}}A_1B^{-\frac{1}{2}}) < 1/2 \mbox{ and } w(C^{-\frac{1}{2}}A_2C^{-\frac{1}{2}}) < 1/2.$$
\end{enumerate}
\end{theorem}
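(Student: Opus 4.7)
The plan is to first translate condition (1) into the tensor-product equality $\cl S_2 \otimes_{\min} \cl A = \cl S_2 \otimes_{\max} \cl A$.  This equivalence follows by combining the facts that $\bb{F}_2 \supseteq \bb{F}_\infty$ and that $C^*(\bb{F}_2)$ has LP (Example \ref{lp groups}) with Proposition \ref{p_dg} (so $\bb{F}_2$ characterises WEP), together with the completely order isomorphic embedding $\cl S_2 \subseteq C^*(\bb{F}_2)$ and a ucp projection back, which allow one to transfer the tensor equality between $C^*(\bb{F}_2)$ and its operator subsystem $\cl S_2$ in both directions.

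Next I would exploit the representation of $\cl S_2$ as the operator system coproduct amalgamated over the unit, $\cl S_2 = \cl S_1^{(1)} +_1 \cl S_1^{(2)}$, where $\cl S_1^{(i)} = \sspp\{1, u_i, u_i^*\}$.  This coproduct is the operator system quotient of $\cl S_1 \oplus \cl S_1$ by the null subspace $\sspp\{(1, -1)\}$, and since $\otimes_{\max}$ is functorial with respect to complete quotient maps, a strongly positive element of $M_k(\cl S_2 \otimes_{\max} \cl A)$ lifts via Proposition \ref{cqchar} to a strongly positive element of $M_k((\cl S_1 \oplus \cl S_1) \otimes_{\max} \cl A)$.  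By Lemma \ref{l_dsu} this splits as a pair $(Y_1, Y_2)$ with each $Y_i = B_i \otimes 1 + A_i \otimes u_i + A_i^* \otimes u_i^*$ strongly positive in $\cl S_1^{(i)} \otimes_{\max} \cl A$.  Since $\cl S_1$ is $(\min,\max)$-nuclear (via Sz.-Nagy dilation, any ucp map on $\cl S_1$ extends to $C(\T) = \cstar(\bb{Z})$), this equals $\cl S_1^{(i)} \otimes_{\min} \cl A$, and a direct computation---conjugating by $B_i^{-1/2}$ and using $w(T) = \sup_{|\lambda|=1}\|\Re(\lambda T)\|$---identifies strong positivity of $Y_i$ with $w(B_i^{-1/2} A_i B_i^{-1/2}) < \tfrac{1}{2}$.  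Tracking the unit normalisation through the amalgamation yields $A_0 = \tfrac{1}{2}(B_1 + B_2)$, giving (1) $\Rightarrow$ (2).

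The reverse direction (2) $\Rightarrow$ (1) runs the argument backwards: given such a decomposition, any strongly positive $X \in M_k(\cl S_2 \otimes_{\min} \cl A)$ is exhibited as a sum of strongly positive elements of $\cl S_1^{(i)} \otimes_{\max} \cl A$ and is therefore strongly positive in $\cl S_2 \otimes_{\max} \cl A$; an Archimedeanisation argument then yields the tensor identity, hence WEP.  The equivalence (2) $\Leftrightarrow$ (3) is a routine rescaling: (2) applied with $A_0 = I_k$ gives (3) after unpacking $w(A_1, A_2) < \tfrac{1}{2}$ as strong positivity of the corresponding element of $M_k(\cl S_2 \otimes_{\min} \cl A)$, while (3) is extended to arbitrary strongly positive $X$ by conjugating with $1 \otimes A_0^{-1/2}$ to normalise the constant term to $I$, applying (3), and then conjugating the resulting $\tilde B, \tilde C$ back by $A_0^{1/2}$.

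The main obstacle I expect lies in the coproduct step: one must verify precisely that $\cl S_1 \oplus \cl S_1 \to \cl S_2$ is a complete quotient map with the claimed unit normalisation, so that the factor $\tfrac{1}{2}$ in $A_0 = (B+C)/2$ emerges naturally, and that max-tensoring this quotient with $\cl A$ preserves strong positivity of lifts---this relies on the functoriality of $\otimes_{\max}$ under complete quotient maps developed in \cite{kavruk--paulsen--todorov--tomforde2010}.  The subsequent identification of strong positivity in $\cl S_1 \otimes_{\min} \cl A$ with a numerical radius bound is a Fej\'er--Riesz-type computation that must be carried out with the correct constants; any rescaling error there propagates into the statement of (2) and (3).
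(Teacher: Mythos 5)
Your overall strategy coincides with the paper's: reduce (1) to the tensor identity $\cl S_2\otimes_{\min}\cl A=\cl S_2\otimes_{\max}\cl A$, realise $\cl S_2$ as the quotient of $\cl S_1\oplus\cl S_1$ by $\sspp\{(1,-1)\}$, use the nuclearity of $\cl S_1$ together with Lemma \ref{l_dsu} and Lemma \ref{l_fp} to turn the question into lifting strongly positive elements, and then translate strong positivity of $1\otimes B+\zeta\otimes A_1+\bar\zeta\otimes A_1^*$ into the bound $w(B^{-1/2}A_1B^{-1/2})<\tfrac12$; the rescaling argument for (2)$\Leftrightarrow$(3) is also the one the paper gives.

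There is, however, one genuine gap in your first step. You justify the equivalence of WEP with $\cl S_2\otimes_{\min}\cl A=\cl S_2\otimes_{\max}\cl A$ by invoking ``the completely order isomorphic embedding $\cl S_2\subseteq C^*(\bb F_2)$ and a ucp projection back.'' No unital completely positive projection of $C^*(\bb F_2)$ onto the five-dimensional operator subsystem $\cl S_2$ is available: the projections used in Proposition \ref{p_dg} come from \cite[Proposition 8.8]{pisier_intr} and exist onto the group C$^*$-subalgebra $C^*(\bb F_\infty)$, not onto an operator subsystem spanned by finitely many unitaries. The equivalence you want is true, but it is a theorem (the paper cites \cite[Theorem 5.9]{kavruk2011}); if you want to prove it rather than cite it, the two directions need different mechanisms. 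Downward, use that $C^*_u(\cl S_2)\cong C^*(\bb F_2)$, so $\cl S_2\otimes_{\max}\cl A\coisubset C^*(\bb F_2)\otimes_{\max}\cl A$, and combine with the injectivity of $\min$. Upward, use the ``enough unitaries'' argument of \cite[Proposition 9.5]{kavruk--paulsen--todorov--tomforde2010}, exactly as in the proof of Theorem \ref{p_eqmma} for $NC(n)$ inside $C^*(\ast_{j=1}^n\bb Z_2)$. A second, smaller imprecision: $\cl S_1$ is $(\min,{\rm c})$-nuclear, not $(\min,\max)$-nuclear; this still gives $\cl S_1\otimes_{\min}\cl A=\cl S_1\otimes_{\max}\cl A$ because $\cl A$ is a C$^*$-algebra, so ${\rm c}$ and $\max$ coincide there, but the distinction should be kept.
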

\begin{proof} We first prove the equivalence of (1) and (2).
Let $\cl J = {\rm span}\{(1,-1)\}\subseteq \cl S_1\oplus \cl S_1$.
It follows from \cite[Corollary 4.4]{kavruk2012} 
and \cite[Proposition 4.7]{kavruk2012} that 
$\cl J$ is a kernel and $\cl S_2 = (\cl S_1\oplus \cl S_1)/\cl J$. 
%$\cl S_1\oplus_1\cl S_1 = \cl S_2$, where $\oplus_1$ denotes the coproduct of operator systems
%(see the second paragraph of Section \ref{s_cop}).
Let $q : \cl S_1\oplus \cl S_1\to \cl S_2$ be the corresponding (complete) quotient map.
By \cite[Proposition 3.3]{fkpt}, $\cl S_1$ is $(\min,\comm)$-nuclear and hence 
$\cl S_1\otimes_{\min}\cl A = \cl S_1\otimes_{\max}\cl A$. 
On the other hand, for every operator system $\cl S$ and every unital C*-algebra $\cl B$, 
we have that $M_k(\cl S\otimes_{\min}\cl B) = \cl S\otimes_{\min} M_k(\cl B)$ and 
$M_k(\cl S\otimes_{\max}\cl B) = \cl S\otimes_{\max} M_k(\cl B)$, $k\in \bb{N}$. 
%\marginpar{\tiny arguments changed}
It now follows from Lemma \ref{l_dsu} that 
$(\cl S_1\oplus\cl S_1)\otimes_{\min}\cl A = (\cl S_1\oplus\cl S_1)\otimes_{\max}\cl A$.
In the diagram 
$$\begin{matrix} 
(\cl S_1\oplus \cl S_1)\otimes_{\min}\cl A & = & (\cl S_1\oplus \cl S_1)\otimes_{\max}\cl A\\
\downarrow & & \downarrow\\
\cl S_2\otimes_{\min}\cl A & \leftarrow & \cl S_2\otimes_{\max}\cl A,
\end{matrix}$$
the right arrow denotes a complete quotient map by \cite[Proposition 1.6]{farenick--paulsen2011}, 
while the left arrow denotes the completely positive map $q\otimes_{\min}\id$
arising from the functoriality of $\min$. 
By Lemma \ref{l_fp}, 
$\cl S_2\otimes_{\min}\cl A = \cl S_2\otimes_{\max}\cl A$ if and only if 
$q\otimes_{\min}\id$ is a complete quotient map. 
By \cite[Theorem 5.9]{kavruk2011}, 
it suffices to show that \eqref{th_st-2} is equivalent to $q\otimes_{\min}\id$ being a complete quotient map.

To this end, suppose that $q\otimes_{\min}\id$ is a complete quotient and 
let $X$ be the strongly positive element of $M_k(\cl S_2\otimes_{\min}\cl A)$ given in \eqref{th_st-2}. 
By \cite[Proposition 3.2]{farenick--kavruk--paulsen2011}, 
there exists a strongly positive element 
$Y\in M_k((\cl S_1\oplus \cl S_1)\otimes_{\min}\cl A )$ with $(q\otimes_{\min}\id)^{(k)}(Y) = X$. 
By virtue of Lemma \ref{l_dsu}, write $Y = (Y_1,Y_2)$, where $Y_1$ and $Y_2$ are strongly positive elements of 
$\cl S_1\otimes_{\min}M_k(\cl A)$. 
Write $Y_1 = 1\otimes B + \zeta\otimes B_1 + \bar{\zeta}\otimes B_2$ and 
$Y_2 = 1\otimes C + \zeta\otimes C_1 + \bar{\zeta}\otimes C_2$, where we have denoted by $\zeta$ the 
generator of $\cl S_1$, viewed as the identity function on the unit circle $\bb{T}$. 
It follows that
$$1\otimes \frac{1}{2}(B + C) + u_1 \otimes B_1 + u_1^*\otimes B_2 + u_2\otimes C_1 + u_2^*\otimes C_2 = X,$$
which shows that $\frac{1}{2}(B + C) = A_0$, $B_1 = B_2^* = A_1$, $C_1 = C_2^* = A_2$.

Suppose that 
$Y_1\geq \delta 1$. Then, for every $z\in \bb{T}$ we have that 
$B + z A_1 + \bar{z} A_1^* \geq \delta I$ in $M_k(\cl A)$. Taking $z = \pm 1$, we see that 
$B\geq \delta 1$ and hence $B$ is invertible. Thus, 
$I + z B^{-\frac{1}{2}}A_1B^{-\frac{1}{2}} + \bar{z} B^{-\frac{1}{2}}A_1^*B^{-\frac{1}{2}} 
\geq \frac{\delta}{\|B\|} I$ in $M_k(\cl A)$,
for every $z\in \bb{T}$.
By \cite[Theorem 1.1]{farenick--kavruk--paulsen2011}, this implies that
$w(B^{-\frac{1}{2}}A_1B^{-\frac{1}{2}}) < \frac{1}{2}$. Similarly, $C$ is invertible and 
$w(C^{-\frac{1}{2}}A_2C^{-\frac{1}{2}}) < \frac{1}{2}$.

Conversely, if \eqref{th_st-2} is satisfied then reversing the steps in the previous two paragraphs shows that 
the element $Y = (Y_1,Y_2)$ is a strongly positive lifting of $X$. 
By \cite[Proposition 3.2]{farenick--kavruk--paulsen2011} and Proposition~\ref{cqchar}, 
$q\otimes_{\min}\id$ is a complete quotient map.   This proves the equivalence of (1) and (2).

We now show that (2) and (3) are equivalent.

Recall that $w(A_1,A_2) < 1/2$ if and only if 
$$I \otimes I + A_1 \otimes u_1 + A_1^* \otimes u_1^* + A_2 \otimes u_2 + A_2^* \otimes u_2^*$$
is strictly positive. From this we see that  (2) implies (3).  Conversely, if (3) holds then (2) holds for the case that $A_0=I.$ For the general case, use that fact that the strict positivity implies that $A_0$ is positive and invertible and conjugate by $A_0^{-\frac{1}{2}}.$
\end{proof}

\begin{theorem}\label{th_eqnc}
The following statements are equivalent for $\cl A = C^*(\ast_{j=1}^m \bb{Z}_2)$:
\begin{enumerate}
\item\label{th_eqnc-1}
$\rho\otimes_{\min}\id : \cl T_{n+1}\otimes_{\min} \cl A\to NC(n)\otimes_{\min} \cl A$ is a complete quotient map;
\item\label{th_eqnc-2}
$NC(n)\otimes_{\min} NC(m) = NC(n)\otimes_c NC(m)$.
\end{enumerate}
Moreover, if $n,m \geq 3$, then these statements are equivalent to:
\begin{enumerate}
\item[{(3)}]\label{th_eqnc-3}
Kirchberg's Conjecture holds true. 
\end{enumerate}
\end{theorem}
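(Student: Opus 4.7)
\bigskip

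\noindent\textbf{Proof plan.} I will establish $(1) \Leftrightarrow (2)$ for arbitrary $n,m$, and then the equivalence $(1) \Leftrightarrow (3)$ under the additional hypothesis $n, m \geq 3$.

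For $(1) \Leftrightarrow (3)$: By Theorem~\ref{p_eqmma} (using $n \geq 3$), statement (1) is equivalent to $\cl A$ possessing WEP. Since $m \geq 3$, Example~\ref{p_dg_eg} shows that $\ast_{j=1}^m \bb Z_2$ contains $\bb F_2$, while Example~\ref{lp groups} gives that $C^*(\ast_{j=1}^m \bb Z_2)$ has LP and hence LLP. Proposition~\ref{keq} applied to $G = \ast_{j=1}^m \bb Z_2$ then yields that $\cl A = C^*(G)$ has WEP if and only if Kirchberg's Conjecture holds, which is (3).

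For $(1) \Leftrightarrow (2)$: The equivalence $(1) \Leftrightarrow (2)$ of Theorem~\ref{p_eqmma} (which does not require $n \geq 3$) reduces (1) to the equality $NC(n) \otimes_{\min} \cl A = NC(n) \otimes_{\max} \cl A$. Since $\cl A$ is a C*-algebra, $\otimes_c \cl A = \otimes_{\max} \cl A$, so (1) becomes $NC(n) \otimes_{\min} \cl A = NC(n) \otimes_c \cl A$. The plan is to compare this equality at the level of $\cl A$ to the equality (2) at the level of $NC(m)$ via the inclusion $NC(m) \coisubset \cl A = C_e^*(NC(m))$. Two complete order embeddings must be assembled:
\begin{itemize}
\item[(a)] $NC(n) \otimes_{\min} NC(m) \coisubset NC(n) \otimes_{\min} \cl A$, from the injectivity of $\otimes_{\min}$;
\item[(b)] $NC(n) \otimes_c NC(m) \coisubset NC(n) \otimes_c \cl A$, which follows from \cite[Lemma 6.2]{fkpt} applied with $NC(m)$ viewed inside its C*-envelope $\cl A$, combined with the identification $\otimes_c \cl A = \otimes_{\max} \cl A$.
\end{itemize}

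Once (a) and (b) are in place, $(1) \Rightarrow (2)$ is a direct restriction argument: equality between the $\min$- and $c$-cones inside $NC(n) \otimes \cl A$ automatically descends, via the complete order embeddings, to equality on the subspace $NC(n) \otimes NC(m)$. Conversely, for $(2) \Rightarrow (1)$, I would test positivity of an element $u$ of $NC(n) \otimes_{\min} \cl A$ against commuting pairs $(\phi, \pi)$ of a ucp map $\phi: NC(n) \to \cl B(H)$ and a $*$-representation $\pi: \cl A \to \cl B(H)$; since $NC(m)$ generates $\cl A$ and $\pi|_{NC(m)}$ is a ucp map commuting with $\phi$, hypothesis (2) together with the behaviour of positivity under the generating inclusion $NC(m) \subset \cl A$ propagates $\min$-positivity of $u$ to $c$-positivity.

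The main obstacle will be justifying the embedding (b): unlike $\otimes_{\min}$, the commuting tensor product is not injective in general, and I will need to exploit that $\cl A$ is simultaneously a C*-algebra and the C*-envelope of $NC(m)$ to convert the question into one about $\otimes_{\max}$, where \cite[Lemma 6.2]{fkpt} applies. The $(2) \Rightarrow (1)$ direction, which requires propagating an equality at the $NC(m)$-level to the entire C*-algebra $\cl A$ it generates, is the most delicate step.
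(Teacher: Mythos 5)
Your treatment of the equivalence with (3) is correct but genuinely different from the paper's: you route it through Proposition~\ref{keq} (applied to $G=\ast_{j=1}^m\bb Z_2$, using Example~\ref{lp groups} and Example~\ref{p_dg_eg}) together with the $n\geq 3$ case of Theorem~\ref{p_eqmma}, which gives $(1)\Leftrightarrow(3)$ directly and cleanly. The paper instead proves $(2)\Rightarrow(3)$ by observing that $(2)$ forces $\cl B\otimes_{\min}\cl A=\cl B\otimes_{\max}\cl A$ for $\cl B=C^*(\ast_{j=1}^n\bb Z_2)$ and then invoking \cite[Corollary C.4]{fritz}, and proves $(3)\Rightarrow(2)$ by an explicit quotient argument passing through $\cl S_n\otimes\cl S_m\to NC(n)\otimes NC(m)$. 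Your route is shorter and is a legitimate alternative. Likewise your $(1)\Rightarrow(2)$ is exactly the paper's restriction argument via the two embeddings (a) and (b), and (b) is indeed what \cite[Lemma 6.2]{fkpt} provides.

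The genuine gap is in $(2)\Rightarrow(1)$. You propose to test an element $u\in M_k(NC(n)\otimes_{\min}\cl A)_+$ against a commuting pair $(\phi,\pi)$ and to use that $\pi|_{NC(m)}$ commutes with $\phi$; but $u$ has legs in $\cl A$, not in $NC(m)$, so the map $\phi\cdot(\pi|_{NC(m)})$, which hypothesis (2) makes completely positive on $NC(n)\otimes_{\min}NC(m)$, cannot be applied to $u$ at all. The phrase ``the behaviour of positivity under the generating inclusion $NC(m)\subset\cl A$ propagates min-positivity to c-positivity'' is precisely the nontrivial assertion that needs proof: in general, $\cl S\otimes_{\min}\cl T=\cl S\otimes_c\cl T$ for a generating operator subsystem $\cl S\subseteq\cl A$ does \emph{not} imply the corresponding identity for $\cl A$. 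What saves the day here is the multiplicative structure: the formal identity $\beta:NC(n)\otimes_{\min}NC(m)\to\cl B\otimes_{\max}\cl A$ is unital completely positive (by (2) and \cite[Lemma 6.2]{fkpt}), it sends each $u\otimes v$, with $u,v$ among the self-adjoint unitary generators, to a unitary, and these unitaries generate the C$^*$-algebra $\cl B\otimes_{\min}\cl A$; hence by \cite[Lemma 9.3]{kavruk--paulsen--todorov--tomforde2010} $\beta$ extends to a $*$-homomorphism $\cl B\otimes_{\min}\cl A\to\cl B\otimes_{\max}\cl A$, whose restriction carries $M_k(NC(n)\otimes_{\min}\cl A)_+$ into $M_k(NC(n)\otimes_{\max}\cl A)_+$. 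Without this unitary-generation step (or an equivalent substitute) your argument does not close.
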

\begin{proof}
\eqref{th_eqnc-1} $ \Rightarrow$ \eqref{th_eqnc-2}.
By Theorem \ref{p_eqmma}, $NC(n)\otimes_{\min}\cl A = NC(n)\otimes_{\max} \cl A$. 
On the other hand, $NC(n)\otimes_{\min} NC(m)\coisubset NC(n)\otimes_{\min}\cl A$ by the 
injectivity of $\min$, while 
$NC(n)\otimes_c NC(m)\coisubset NC(n)\otimes_{\max} \cl A$ by \cite[Lemma 6.2]{fkpt}.
It follows that $NC(n)\otimes_{\min} NC(m) = NC(n)\otimes_c NC(m)$.

\eqref{th_eqnc-2}  $\Rightarrow$ \eqref{th_eqnc-1}.
Set $\cl B = C^*(\ast_{j=1}^n \bb{Z}_2)$. By \cite[Lemma 6.2]{fkpt}, we have that
$NC(n)\otimes_c NC(m)\coisubset \cl B\otimes_{\max}\cl A$. The assumption implies that 
the linear embedding $\beta : NC(n)\otimes_{\min} NC(m)\to \cl B\otimes_{\max}\cl A$ 
is completely positive. On the other hand, $NC(n)\otimes_{\min} NC(m)\coisubset \cl B\otimes_{\min}\cl A$ 
and $\beta(u\otimes v)$ is unitary, for all canonical unitary generators $u$ (resp. $v$) 
of the operator system $NC(n)$ (resp. $NC(m)$). 
Since $u\otimes v$ for such $u$ and $v$ generate $\cl B\otimes_{\min}\cl A$, 
\cite[Lemma 9.3]{kavruk--paulsen--todorov--tomforde2010} implies that $\phi$ has an extension 
to a *-homomorphism $\pi : \cl B\otimes_{\min}\cl A\to \cl B\otimes_{\max}\cl A$. Thus, 
every positive element of $M_k(NC(n)\otimes_{\min}\cl A)$ is sent via $\pi^{(k)}$ to a positive element of
$M_k(NC(n)\otimes_{\max}\cl A)$ and, by Theorem \ref{p_eqmma}, $\rho\otimes_{\min}\id$ is 
a complete quotient map. 

Suppose that $n,m\geq 3$. Assuming \eqref{th_eqnc-2}, we have seen that $\cl B\otimes_{\min}\cl A = \cl B\otimes_{\max}\cl A$. 
By \cite[Corollary C.4]{fritz}, $C^*(\bb{F}_2)\otimes_{\min} C^*(\bb{F}_2) = C^*(\bb{F}_2)\otimes_{\max} C^*(\bb{F}_2)$, and hence 
Kirchberg's Conjecture holds.  
Conversely, if Kirchberg's Conjecture holds then 
$\cl S_n\otimes_{\min}\cl S_m = \cl S_n\otimes_{\max}\cl S_m$.
Denoting for a moment by $\psi_n$ (resp. $\psi_m$) the 
canonical quotient map from $\cl S_n$ onto $NC(n)$ introduced 
after Proposition \ref{p_unip}, 
we have, by \cite[Proposition 1.6]{farenick--paulsen2011}, that 
$\psi_n\otimes\psi_m : \cl S_n\otimes_{\max}\cl S_m
\to NC(n)\otimes_{\max} NC(m)$ is a complete quotient map. 
Let $\gamma_n : NC(n)\to \cl S_n$ be the linear map given by 
$\gamma_n(h_i) = \frac{h_i + h_i^*}{2}$, $i = 1,\dots,n$. 
By \cite[Proposition 5.7]{fkpt}, $\gamma_n$ is a complete order isomorphism onto its range 
and a right inverse of $\psi_n$. 
%\marginpar{\tiny details added}
Moreover, the map 
$\gamma_n\otimes\gamma_m : NC(n)\otimes_{\min} NC(m) \to \cl S_n\otimes_{\min} \cl S_m$
sis completely positive. 
A standard diagram chase now shows that \eqref{th_eqnc-2} holds: namely,
$NC(n)\otimes_{\min} NC(m) = NC(n)\otimes_{\max} NC(m)$. 
\end{proof}

We conclude this section with another realisation of $NC(n)$ as 
a quotient of a matrix operator system, which leads to a different characterisation of WEP. 
Following \cite{farenick--paulsen2011}, let 
$$\cl W_n = {\rm span}\{u_iu_j^* : i,j = 0,1,\dots,n\}\subseteq C^*(\bb{F}_n),$$
where we have set $u_0 = 1$. 
Let $\beta : M_{n+1}\to \cl W_n$ be the linear map given by 
$\beta(E_{i,j}) = \frac{1}{n+1}u_i^* u_j$, $i,j = 0,\dots,n$. 
It follows from \cite{farenick--paulsen2011} that 
$\beta$ is a complete quotient map with kernel 
the space $D_{n+1}^0$ of all diagonal matrices of trace zero; 
thus, $\beta : M_{n+1}/D_{n+1}^0 \to \cl W_n$ is a complete order isomorphism. 
(We note that the map sending $E_{i,j}$ to $u_i u_j^*$ was considered in 
\cite{farenick--paulsen2011} but since $\{u_1^*,\dots,u_n^*\}$ is a set of universal unitaries
whenever $\{u_1,\dots,u_n\}$ is such, the claims remain true 
with our definition as well.)
Clearly, $\cl S_n\subseteq \cl W_n$ and 
$$\cl R_{n+1} \stackrel{def}{=}\beta^{-1}(\cl S_n) = 
{\rm span}\{E_{1,j}, E_{j,1}, E_{j,j} : j = 1,\dots,n+1\}.$$
Let $\gamma$ denote the restriction of $\beta$ to $\cl R_{n+1}$. We claim that $\gamma$
is a complete quotient map from $\cl R_{n+1}$ onto $\cl S_n$. 
Indeed, if $X$ is a strongly positive element of $M_k(\cl S_n)$
then $X$ is also strongly positive as an element of $M_k(\cl W_n)$. 
By \cite[Proposition 3.2]{farenick--kavruk--paulsen2011}, 
there exists $Y\in M_k( M_{n+1})$ such that $\beta^{(k)}(Y) = X$.
However, $R\in M_k(\cl R_{n+1})$ by the definition of $\cl R_{n+1}$.

We note that the map $\gamma$ is defined by the relations 
$\gamma(E_{i,i}) = \frac{1}{n+1} 1$, $\gamma(E_{1,i}) = \frac{1}{n+1} u_i^*$, 
$i = 1,\dots,n+1$.

\begin{proposition}\label{p_anqr}
The map $\psi\circ \gamma : \cl R_{n+1}\to NC(n)$ is a complete 
quotient map with kernel 
$$\cl L_{n+1} = {\rm span}\left\{\sum_{i=1}^{n+1} a_i E_{i,i} + 
\sum_{j=2}^{n+1} b_j(E_{1,j} - E_{j,1}) : \sum_{i=1}^{n+1} a_i = 0\right\}.$$
\end{proposition}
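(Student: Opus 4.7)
The plan is to prove the proposition in two stages: first, that $\psi \circ \gamma$ is a complete quotient map (which follows from a general ``two-step lifting'' argument), and second, to identify the kernel by a direct calculation.

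For the first stage, I would invoke Proposition \ref{cqchar}, which characterises complete quotient maps via the existence of strongly positive liftings of strongly positive elements. By \cite[Proposition 5.7]{fkpt}, $\psi : \cl S_n\to NC(n)$ is a complete quotient map, and in the paragraph preceding the proposition it is established that $\gamma : \cl R_{n+1}\to \cl S_n$ is a complete quotient map. Given any strongly positive element $X \in M_k(NC(n))$, Proposition \ref{cqchar} applied to $\psi$ yields a strongly positive $Y \in M_k(\cl S_n)$ with $\psi^{(k)}(Y) = X$. Applying Proposition \ref{cqchar} again to $\gamma$ yields a strongly positive $Z \in M_k(\cl R_{n+1})$ with $\gamma^{(k)}(Z) = Y$. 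Then $Z$ is a strongly positive preimage of $X$ under $(\psi\circ\gamma)^{(k)}$, so $\psi\circ\gamma$ is a complete quotient map by the converse direction of Proposition \ref{cqchar}. (This is essentially the same ``composition of complete quotient maps'' argument used in the proof of Proposition \ref{p_quo}.)

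For the kernel computation, I would write a general element of $\cl R_{n+1}$ in the explicit form
$$T = \sum_{i=1}^{n+1} a_i E_{i,i} + \sum_{j=2}^{n+1} b_j E_{1,j} + \sum_{j=2}^{n+1} c_j E_{j,1},$$
and then apply $\gamma$ using the explicit formulas recorded just before the proposition together with $\beta(E_{i,j}) = \frac{1}{n+1}u_{i-1}^*u_{j-1}$, which give $\gamma(E_{i,i}) = \frac{1}{n+1} 1$, $\gamma(E_{1,j}) = \frac{1}{n+1} u_{j-1}^*$ and $\gamma(E_{j,1}) = \frac{1}{n+1} u_{j-1}$. Composing with $\psi$, which sends $u_i$ and $u_i^*$ both to $h_i$, gives
$$(\psi\circ\gamma)(T) = \frac{1}{n+1}\left(\sum_{i=1}^{n+1} a_i\right)\! 1 \;+\; \frac{1}{n+1}\sum_{j=2}^{n+1} (b_j + c_j)\, h_{j-1}.$$
Since $\{1,h_1,\dots,h_n\}$ is a linearly independent set in $NC(n)$, this expression vanishes if and only if $\sum_{i=1}^{n+1} a_i = 0$ and $c_j = -b_j$ for $j = 2,\dots,n+1$, which is precisely the description of $\cl L_{n+1}$.

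There is no serious obstacle: both factor maps have already been shown to be complete quotients, and Proposition \ref{cqchar} makes ``composition of complete quotient maps is a complete quotient map'' essentially automatic. The only point requiring care is tracking the indexing conventions between $\beta$ (whose indices run from $0$ to $n$) and the description of $\cl R_{n+1}$ (indexed $1$ through $n+1$) when verifying the formulas for $\gamma(E_{1,j})$ and $\gamma(E_{j,1})$; once these are pinned down, the identification of $\ker(\psi\circ\gamma)$ with $\cl L_{n+1}$ is immediate from linear independence in $NC(n)$.
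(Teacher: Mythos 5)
Your proof is correct and follows essentially the same route as the paper: the paper likewise deduces that $\psi\circ\gamma$ is a complete quotient map because it is a composition of two complete quotient maps (the same strongly-positive-lifting argument used for Proposition \ref{p_quo}), and it dismisses the kernel identification as ``straightforward.'' Your explicit computation of the kernel, including the careful handling of the $0$-through-$n$ versus $1$-through-$n+1$ indexing of $\beta$ and $\gamma$, correctly supplies the details the paper omits.
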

\begin{proof}
Since both $\gamma : \cl R_{n+1}\to \cl S_n$ and $\psi : \cl S_n \to NC(n)$ 
are complete quotient maps, the map $\psi\circ\gamma$ is also 
a complete quotient. 
The identification of its kernel is straightforward. 
\end{proof}

Since the graph underlying the operator system $\cl R_{n+1}$ is 
chordal (in fact, it is a tree and hence does not have cycles), 
$\cl R_{n+1}\otimes_{\min}\cl A = \cl R_{n+1}\omax\cl A$ 
for any unital C*-algebra $\cl A$ 
(see \cite[Proposition 6.7]{kavruk--paulsen--todorov--tomforde2011}). Thus, 
a version of Theorem \ref{p_eqmma} can be formulated with 
$\cl R_{n+1}$ in the place of $\cl T_{n+1}$. 
The methods in the proof of Corollary \ref{c_nc3} can be used 
to obtain the following characterisation of WEP.

\begin{corollary}\label{c_weprn}
The following statements are equivalent for a unital C*-algebra $\cl A$:
\begin{enumerate}
\item\label{c_weprn-1}
$\cl A$ has WEP;
\item\label{c_weprn-2} whenever $A_0,A_1,A_2,A_3\in M_k(\cl A)$ are such that 
$1\otimes A_0 + h_1\otimes A_1 + h_2\otimes A_2 + h_3\otimes A_3$ is strongly positive in $NC(3)\otimes_{\min}M_k(\cl A)$, 
there exist elements $A,B,C,D,X,Y,Z\in M_k(\cl A)$ with 
$A+B+C + D = A_0$, $X + X^* = A_1$, $Y + Y^* = A_2$ and $Z + Z^* = A_3$ 
such that the matrix 
$$\left(\begin{matrix} 
A & X & Y & Z\\ 
X^* &  B & 0 & 0\\ 
Y^* & 0 & C & 0\\
Z^* & 0 & 0 & D
\end{matrix}\right)$$ is strongly positive in $M_{4k}(\cl A)$. 
\end{enumerate}
\end{corollary}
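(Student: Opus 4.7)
The plan is to mirror the argument of Corollary \ref{c_nc3}, with the ``star''-shaped template $\cl R_{n+1}$ replacing the tridiagonal template $\cl T_{n+1}$. The first task is to establish an analogue of Theorem \ref{p_eqmma} in which $\cl R_{n+1}$ and the quotient map $\psi\circ\gamma : \cl R_{n+1}\to NC(n)$ (from Proposition \ref{p_anqr}) play the roles of $\cl T_{n+1}$ and $\rho$. Concretely, for $n\geq 3$ I claim that $\cl A$ has WEP if and only if $(\psi\circ\gamma)\otimes_{\min}\id : \cl R_{n+1}\otimes_{\min}\cl A \to NC(n)\otimes_{\min}\cl A$ is a complete quotient map. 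The proof is verbatim that of Theorem \ref{p_eqmma}: $(\psi\circ\gamma)\otimes_{\max}\id$ is a complete quotient by the functoriality of $\max$ (\cite[Proposition 1.6]{farenick--paulsen2011}) applied to the complete quotient $\psi\circ\gamma$; the graph underlying $\cl R_{n+1}$ is a star (and so in particular a tree), whence chordal, so \cite[Proposition 6.7]{kavruk--paulsen--todorov--tomforde2011} gives $\cl R_{n+1}\otimes_{\min}\cl A = \cl R_{n+1}\otimes_{\max}\cl A$; Lemma \ref{l_fp} then forces $(\psi\circ\gamma)\otimes_{\min}\id$ to be a complete quotient exactly when $NC(n)\otimes_{\min}\cl A = NC(n)\otimes_{\max}\cl A$, which for $n\geq 3$ is equivalent to WEP by Theorem \ref{p_eqmma}.

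Specialising to $n=3$, I would then translate the complete-quotient condition into the concrete matrix statement using Proposition \ref{cqchar}: $(\psi\circ\gamma)\otimes_{\min}\id$ is a complete quotient map precisely when every strongly positive element of $M_k(NC(3)\otimes_{\min}\cl A)$ admits a strongly positive pre-image in $M_k(\cl R_4\otimes_{\min}\cl A)$. Because $\cl R_4\subseteq M_4$ as an operator subsystem, an element of
\[
M_k(\cl R_4\otimes_{\min}\cl A)\subseteq M_k(M_4\otimes\cl A) = M_{4k}(\cl A)
\]
is strongly positive iff, when viewed as a block matrix whose support lies in the diagonal and the first row and column of $M_4(M_k(\cl A))$, it is strongly positive in $M_{4k}(\cl A)$. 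This is exactly the block matrix of the displayed ``arrow'' shape in the corollary.

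Finally, applying $(\psi\circ\gamma)\otimes\id$ to such a block matrix sends it to
\[
\tfrac{1}{4}\bigl(1\otimes(A+B+C+D) + h_1\otimes(X+X^*) + h_2\otimes(Y+Y^*) + h_3\otimes(Z+Z^*)\bigr),
\]
so after absorbing the factor $\tfrac{1}{4}$ by rescaling $A,B,C,D,X,Y,Z$, matching coefficients against a strongly positive element $1\otimes A_0 + h_1\otimes A_1 + h_2\otimes A_2 + h_3\otimes A_3$ of $M_k(NC(3)\otimes_{\min}\cl A)$ yields precisely the relations $A+B+C+D = A_0$, $X+X^* = A_1$, $Y+Y^* = A_2$, $Z+Z^* = A_3$. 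The main obstacle is essentially administrative: the substantive inputs—Proposition \ref{p_anqr} identifying $\psi\circ\gamma$ as a complete quotient, and the chordality of the star graph giving $\min = \max$ on $\cl R_{n+1}\otimes\cl A$—are already in hand, so only careful bookkeeping of the rescaling and of the support pattern remains.
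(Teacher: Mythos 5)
Your proposal is correct and follows exactly the route the paper intends: the text preceding Corollary \ref{c_weprn} notes that chordality of the (star-shaped) graph underlying $\cl R_{n+1}$ gives $\cl R_{n+1}\otimes_{\min}\cl A = \cl R_{n+1}\otimes_{\max}\cl A$, so that a version of Theorem \ref{p_eqmma} holds with $\cl R_{n+1}$ and $\psi\circ\gamma$ in place of $\cl T_{n+1}$ and $\rho$, and then invokes ``the methods in the proof of Corollary \ref{c_nc3}'' for the translation into the arrow-shaped matrix completion statement. Your bookkeeping of the $\tfrac{1}{4}$ rescaling and of the support pattern matches the paper's computation in Corollaries \ref{c_nc2} and \ref{c_nc3}.
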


Although Corollary~\ref{c_nc3} and Corollary~\ref{c_weprn} both give characterisations of WEP in terms of ``completions'' of $4 \times 4$ matrices, there does not 
appear to be a direct connection between the two sets of conditions. In fact, even though both of these results arise from realising $NC(3)$ as a quotient of $\cl T_4$ and $\cl R_4$, 
respectively, we shall now show that these later operator systems are not completely order isomorphic.

\begin{proposition}\label{p_gr}
The operator systems $\cl R_n$ and $\cl T_n$ are not completely order 
isomorphic unless $n \in \{1,2,3\}$.
\end{proposition}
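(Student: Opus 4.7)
The plan is to show that any complete order isomorphism between $\cl R_n$ and $\cl T_n$ must arise from an inner automorphism of $M_n$, and then to rule this out by a support-counting argument for $n\geq 4$.

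The first step is to reduce the problem to the existence of a suitable unitary conjugation. Both $\cl R_n$ and $\cl T_n$ contain enough matrix units to generate $M_n$ as a C*-algebra, and since $M_n$ is simple, one checks that $\cstare(\cl R_n)=\cstare(\cl T_n)=M_n$ under the given concrete embeddings. By the universal property of the C*-envelope, any complete order isomorphism $\phi:\cl R_n\to\cl T_n$ extends to a $*$-isomorphism $\Phi:M_n\to M_n$; by Skolem--Noether, $\Phi=\mathrm{Ad}(U)$ for some unitary $U\in M_n$. It thus suffices to prove that $U\cl R_n U^*\neq \cl T_n$ for every unitary $U\in M_n$ whenever $n\geq 4$.

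Writing $f_i=Ue_i$, the relations $UE_{i,i}U^*=f_if_i^*\in\cl T_n$ and $UE_{1,j}U^*=f_1f_j^*\in\cl T_n$ (for $j\geq 2$) translate into the following combinatorial constraints: (i) each $f_i$ has support on a single index or on two consecutive indices; and (ii) $(f_1)_a\overline{(f_j)_b}=0$ whenever $j\geq 2$ and $|a-b|\geq 2$.

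The proof is then completed by a short case analysis on $\mathrm{supp}(f_1)$. If $\mathrm{supp}(f_1)=\{k\}$, condition (ii) confines the support of each $f_j$ with $j\geq 2$ to $\{k-1,k,k+1\}$, orthogonality with $f_1$ eliminates the coordinate $k$, and condition (i) together with the non-consecutiveness of $\{k-1,k+1\}$ forces each $f_j$ to be a scalar multiple of either $e_{k-1}$ or $e_{k+1}$; the $n-1$ mutually orthogonal vectors $f_2,\dots,f_n$ then lie in a $2$-dimensional subspace, which is incompatible with $n\geq 4$. If instead $\mathrm{supp}(f_1)=\{k,k+1\}$ with both coordinates nonzero, condition (ii) restricts each $f_j$, $j\geq 2$, to be supported in $\{k,k+1\}$, and orthogonality with $f_1$ confines $f_j$ to a one-dimensional subspace, which already fails for $n\geq 3$. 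Either way no such unitary $U$ exists for $n\geq 4$. The main technical point is the first step, namely the reduction of an abstract complete order isomorphism to a unitary conjugation on $M_n$; once that is in place, what remains is a straightforward linear-algebraic count of supports.
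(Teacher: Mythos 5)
Your proof is correct, and while it shares the paper's first step verbatim --- both arguments use that $\cl R_n$ and $\cl T_n$ are connected graph operator systems whose generated C*-algebra is the simple algebra $M_n$, so that $\cstare(\cl R_n)=\cstare(\cl T_n)=M_n$ and any complete order isomorphism is implemented by a unitary conjugation --- the second half is genuinely different. The paper proceeds structurally: it sets $P_k=U^*E_{k,k}U$, uses that $\cl R_{n+1}$ is then a bimodule over ${\rm span}\{P_k\}$ together with a classification of the full matrix subalgebras of $\cl R_{n+1}$ to force each $P_k$ to be a diagonal matrix unit, concludes that $U$ is a permutation modulo a diagonal unitary, and finishes by observing that the path graph underlying $\cl T_{n+1}$ and the star graph underlying $\cl R_{n+1}$ are not isomorphic for $n\geq 3$ (degree count). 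You instead read off the membership conditions $f_if_i^*\in\cl T_n$ and $f_1f_j^*\in\cl T_n$ directly as support constraints on the columns $f_i=Ue_i$ and derive a contradiction by a dimension count; this is more elementary and self-contained for this particular pair, avoiding both the bimodule argument and the graph-isomorphism framework, whereas the paper's route yields the more reusable general fact that complete order isomorphisms of connected graph operator systems come from graph isomorphisms. Two small points you may as well note explicitly: in the case ${\rm supp}(f_1)=\{k\}$ with $k=1$ or $k=n$ the available subspace for $f_2,\dots,f_n$ is only one-dimensional, so the contradiction is even easier there; and the exceptional case $n=3$ survives your analysis exactly because the first case with a two-dimensional target accommodates the two remaining orthogonal columns, matching the actual permutation isomorphism $\cl R_3\cong\cl T_3$.
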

\begin{proof}
For a graph $\cl G$ on $n$ vertices, let $\cl S_{\cl G}$ be the 
\lq\lq graph operator system'' (see \cite{kavruk--paulsen--todorov--tomforde2011})
$$\cl S_{\cl G} = {\rm span}\{E_{i,j}, E_{k,k} : k = 1,\dots,n, (i,j)\in \cl G\}.$$

We first claim that if 
$\cl G_1$ and $\cl G_2$ are connected graphs on $n$ vertices and 
$\nph : \cl S_{\cl G_1}\to \cl S_{\cl G_2}$ is a 
complete order isomorphism then there exists a unitary
$U\in M_n$ such that $U^*\cl S_{\cl G_1} U = \cl S_{\cl G_2}$.
Indeed, since $\cl G_1$ and $\cl G_2$ are connected, the C*-algebras
$C^*(\cl S_{\cl G_1})$ and $C^*(\cl S_{\cl G_2})$ generated by 
$\cl S_{\cl G_1}$ and $\cl S_{\cl G_2}$, respectively, both coincide with $M_n$. 
Since $M_n$ is simple, we have that 
the C*-envelopes 
$C_{\rm e}^*(\cl S_{\cl G_1})$ and $C_{\rm e}^*(\cl S_{\cl G_2})$ of
$\cl S_{\cl G_1}$ and $\cl S_{\cl G_2}$, respectively, both coincide with $M_n$.
The complete order isomorphism $\nph$ now gives rise to 
an isomorphism between their C*-envelopes and hence there exists an isomorphsim
$\tilde{\nph} : M_n\to M_n$ extending $\nph$. 
Let $U\in M_n$ be a unitary matrix with 
$\tilde{\nph}(A) = U^*AU$, $A\in M_n$; then 
$U^*\cl S_{\cl G_1} U = \cl S_{\cl G_2}$.

Now note that $\cl T_{n+1}$ and $\cl R_{n+1}$ are both graph operator systems. 
Let $P_k = U^*E_{k,k}U$, $k = 1,\dots,n+1$,
and $\cl C = {\rm span}\{P_k : k = 1,\dots,n+1\}$.
Since $\cl T_{n+1}$ is a bimodule over the algebra $D_{n+1}$
of all diagonal matrices, $\cl R_{n+1}$ is a bimodule 
over $\cl C$. Note that each $P_{k}$ is a rank one operator. 
Assume that not all of $P_1,\dots,P_{n+1}$ are equal to a diagonal 
matrix unit in $\cl R_{n+1}$, suppose, for example, that 
$P_1 = (\lambda_i\overline{\lambda_j})_{i,j=1}^{n+1}$ is not 
of the form $E_{k,k}$. 
Set $\Lambda = \{k : \lambda_k\neq 0\}$; then 
${\rm span}\{E_{i,j} : i,j\in \Lambda\}\subseteq \cl R_{n+1}$.
However, the only full matrix subalgebras of $\cl R_{n+1}$ are of the form 
${\rm span}\{E_{1,1}, E_{1,j}, E_{j,1}, E_{j,j}\}$, for some $j$. 
Assume, without loss of generality, that $j =1$. 
But then $P_1 E_{1,3}$ has $\lambda_2\overline{\lambda_1}$ as its 
$(2,3)$-entry, contradicting the definition of $\cl R_{n+1}$. 

It follows that $\{P_k\}_{k=1}^{n+1} = \{E_{k,k}\}_{k=1}^{n+1}$, so that 
there exists a permutation $\pi$ of $\{1,\dots,n+1\}$ with 
$P_k = E_{\pi(k),\pi(k)}$, $k = 1,\dots,n+1$. If we let $U_{\pi}$ denote the
corresponding permutation unitary, then $U_{\pi}E_{k,k}U_{\pi}^* = P_k = U^*E_{k,k}U.$ Hence,
$U_{\pi}^* U^*E_{k,k}UU_{\pi} = E_{k,k}$ for all $k$ and consequently, $UU_{\pi}$ is diagonal.  Thus, $U_{\pi} \cl T_{n+1} U_{\pi}^* = U^* \cl T_{n+1} U = \cl R_{n+1}.$ 
This means that  $\pi$ defines an isomorphism of the 
underlying graphs of $\cl T_{n+1}$ and $\cl R_{n+1}.$  This is a contradiction if $n \geq 3$ since the graph underlying $\cl T_{n+1}$
has at least two vertices of degree $2$, while the graph underlying $\cl R_{n+1}$
has only one vertex of degree bigger than $1$.
\end{proof}

%%%%%%%%%%%%%%%%%%%%%%%%%%%%%%%%%%%%%%%%%%%%%%%%%%%%%

\section{$NC(n)$ as a quotient of $\bb{C}^{2n}$}\label{s_cop}

In this section we represent $NC(n)$ as an operator system 
quotient of the abelian C*-algebra $\bb{C}^{2n}$ in two different ways and include some 
consequences of these results. In the next section we will use these two representations to give two more characterisations of WEP.
We first recall some 
basic facts about coproducts of operator systems.
Coproducts in this category were used by D. Kerr and H. Li \cite{kerrli}, 
where the authors described
the amalgamation process over a joint operator subsystem. 
T. Fritz  demonstrated some applications of this concept
in quantum information theory \cite{fritz0}. A categorical 
treatment and further results can be found in the thesis of the second author \cite{kavruk2011}. 
We next extend the results from \cite{fritz0} and \cite{kavruk2011}
to deduce representations of the coproduct
of (finitely) many operator systems.

Let $\cl S_1,\dots,\cl S_n$ be operator systems. 
Then there exists a unique operator system $\cl U$, along with the unital
complete order embeddings $i_m: \cl S_m \hookrightarrow \cl U$, $m=1,\dots,n$, 
such that the following universal
property holds: For any operator system $\cl T$ and 
unital completely positive maps 
$\varphi_m: \cl S_m \rightarrow \cl T$, $m=1,\dots,n$,
there is a unique unital completely positive map 
$\varphi: \cl U \rightarrow \cl T$ such that $\varphi_m = \varphi \circ i_m$ for all $m$.
In fact, using F. Boca's results \cite{Boca91}, it can be easily 
shown that the operator system
$$
{\rm span} \{ s_1 + \cdots + s_n: \; s_m \in \cl S_m,\; m=1,\dots,n \} \subseteq C^*_u(\cl S_1)*\cdots  *C^*_u(\cl S_n)
$$
satisfies this condition, while its uniqueness is a standard consequence 
of its universal property.
The operator system $\cl U$ will be called the 
\emph{coproduct} of $\cl S_1,\dots,\cl S_n$ and denoted by
$\amalg_{m=1}^n \cl S_m.$
We often identify each $\cl S_m$ with its canonical image $i_m(\cl S_m)$
in $\amalg_{m=1}^n \cl S_m.$ 

As in \cite{kavruk2011}, a more concrete realisation of the coproduct can be given in terms of operator system quotients by null subspaces:

\begin{theorem}\label{thm coproduct}
Let $\cl S_1,\dots,\cl S_n$ be operator systems and 
$$\cl J = {\rm span}\{ (e,-e,0,\dots,0), (e,0,-e,0,\dots,0), ..., (e,0,\dots,0,-e) \}.$$
Then $\cl J$ is a kernel and, up to a 
unital complete order isomorphism, 
$$\amalg_{m=1}^n \cl S_m \cong (\cl S_1 \oplus \cdots \oplus \cl S_n) / \cl J.$$
\end{theorem}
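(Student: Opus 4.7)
The plan is to show that the quotient $\cl Q:=(\cl S_1\oplus\cdots\oplus\cl S_n)/\cl J$ satisfies the same universal property as the coproduct, and then appeal to its uniqueness. First I would verify that $\cl J$ is a kernel; since every null subspace is a kernel by the result of \cite{kavruk2011} recalled in Section~\ref{s_prel}, it is enough to check that $\cl J$ contains no nonzero positive element. A general element has the form $((\sum_{k=2}^n\lambda_k)e,\,-\lambda_2 e,\,\ldots,\,-\lambda_n e)$, and positivity in the direct sum forces $\lambda_k\le 0$ for every $k\ge 2$ together with $\sum_k\lambda_k\ge 0$, so all $\lambda_k$ vanish.

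Next, I would produce the canonical injections. The class $[(0,\dots,e,\dots,0)]\in\cl Q$ is independent of the slot in which $e$ is placed, because the difference of any two such tuples is a linear combination of the generators of $\cl J$. Summing over the $n$ slots yields $[(e,e,\ldots,e)] = n\,[(0,\ldots,e,\ldots,0)]$, so the composition of the natural inclusion $j_m:\cl S_m\hookrightarrow\bigoplus_k \cl S_k$ with the quotient map, rescaled by $n$, defines a unital completely positive map $i_m:\cl S_m\to\cl Q$, $i_m(s)=n\,[(0,\ldots,s,\ldots,0)]$. To verify the universal property, given unital completely positive maps $\varphi_m:\cl S_m\to\cl T$ I would set $\Phi:\bigoplus_k\cl S_k\to\cl T$, $\Phi(s_1,\ldots,s_n)=\frac{1}{n}\sum_m\varphi_m(s_m)$; this is unital completely positive as a convex combination of completely positive maps (each precomposed with a coordinate projection), and it annihilates every generator of $\cl J$ since $\varphi_1(e)-\varphi_k(e)=e-e=0$. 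Hence $\Phi$ descends to a unital completely positive $\dot\Phi:\cl Q\to\cl T$ with $\dot\Phi\circ i_m=\varphi_m$; uniqueness of $\dot\Phi$ is immediate because the images $i_m(\cl S_m)$ jointly span $\cl Q$.

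To close the argument, each $i_m$ is automatically a complete order embedding: taking $\varphi_m:\cl S_m\hookrightarrow\cl B(\cl H)$ to be a faithful unital representation (with any unital completely positive choices, e.g.\ states, for the other $\varphi_k$), the factorisation $\varphi_m=\dot\Phi\circ i_m$ forces $i_m$ to reflect positivity completely, and the uniqueness asserted for the coproduct then yields the required unital complete order isomorphism $\cl Q\cong\amalg_{m=1}^n\cl S_m$. The main obstacle I anticipate is precisely the normalisation: the naive coordinate inclusion $j_m$ sends the unit of $\cl S_m$ to $(0,\ldots,e,\ldots,0)$, which represents only $1/n$ of the unit of $\cl Q$, so the rescaling factor of $n$ on the injections and the matching factor of $1/n$ on $\Phi$ are essential for the construction to be unital; once this is sorted out the verification is routine.
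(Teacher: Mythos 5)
Your proof is correct, and it diverges from the paper's in one substantive place: the verification that the injections $i_m$ are complete order embeddings. The paper proves this directly at the level of the quotient order structure: it invokes the fact that null subspaces are (completely) proximinal kernels \cite[Proposition 2.4]{kavruk2011} to lift a positive element $\iota_m(s)$ to a genuinely positive element $y=(0,\dots,ns,\dots,0)+\sum_{j\neq m}a_jx_j$ of the direct sum, and then reads off from the coordinates that $a_j\le 0$ and hence $ns\ge 0$. You instead establish the universal property first (your construction of $\Phi=\frac1n\sum_m\varphi_m\circ\pi_m$ is the same as the paper's $\tilde\varphi$) and then deduce the embedding property for free by testing against a ucp map $\dot\Phi$ whose restriction $\dot\Phi\circ i_m$ is a faithful representation of $\cl S_m$ in $\cl B(\cl H)$; since $\dot\Phi$ is completely positive and $\varphi_m$ completely order reflecting, $i_m$ must be a complete order embedding. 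There is no circularity, because your proof of the universal property nowhere uses that the $i_m$ are embeddings. Your route is shorter and avoids proximinality altogether, at the cost of leaning on the existence of a unital complete order embedding $\cl S_m\hookrightarrow\cl B(\cl H)$ (harmless, as this is Choi--Effros); the paper's route is more self-contained and, as a by-product, exhibits explicit positive liftings in the direct sum, which is in the spirit of the lifting arguments used elsewhere in the paper. Your explicit check that $\cl J$ is a null subspace fills in a step the paper dismisses as straightforward, and is correct.
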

\begin{proof}
The fact that $\cl J$ is a null subspace (and hence a kernel) is straightforward.
Note that
$$
(e,\dots,e) + \cl J = (ne,0,\dots,0) + J = \cdots  = (0,\dots,0,ne) + \cl J.
$$
Therefore, the map
$\iota_m : \cl S_m \rightarrow (\oplus_{j=1}^n \cl S_j)/\cl J$ given by $s\mapsto (0,\dots,n s,\dots,0) + \cl J$, where the term $ns$ appears at the $m^{th}$-component, 
is unital and completely positive. 

We claim that
$\iota_m$ is a complete order embedding. To prove this, first note that
$$\cl J = {\rm span}\{x_j: j\in \{1,\dots,n\}\setminus\{m\}\},$$
where $x_j$ has the unit $e$ as its $m^{th}$-component, $-e$ as its
$j^{th}$-component and $0$'s elsewhere
(we leave the elementary verification of this to the reader). 

Suppose that 
$\iota_m(s)$ is positive in $(\oplus_{j=1}^n \cl S_j)/\cl J$. 
We will prove that $s$ is positive in $\cl S_m$.
Since null subspaces are (completely) proximinal kernels \cite[Proposition  2.4]{kavruk2011},
it follows that $\iota_m(s)$ has a positive lifting 
$$
y = (0,...,ns,...,0) + \sum_{j \neq m} a_j x_j
$$
in $\oplus_{j=1}^n \cl S_j$.
Using the definition of $x_j$, we see that
$$
y = (-a_1 e,\dots,-a_{m-1}e,ns + \Sigma_{j\neq m} a_j e,-a_{m+1}e,\dots,-a_ne).
$$
Now it is clear that $a_j \leq 0$, $j \neq m$, and hence $n s \geq - \Sigma_{j\neq m} a_m e \geq 0$.
This proves that $\iota_m$ is an order embedding. A similar argument shows that 
$\iota_m$ is a complete order embedding.

As a second step, we show 
that $(\oplus_{j=1}^n \cl S_j)/\cl J$ has the universal property of the coproduct. 
Let $\cl T$
be an operator system and $\varphi_m: \cl S_m \rightarrow \cl T$ be 
a unital completely positive map, $m = 1,\dots,n$. 
Let $\tilde \varphi : \oplus_{j=1}^n \cl S_j \rightarrow \cl T$ be given by
$\tilde \varphi(s_1,\dots,s_n) = (1/n) \sum_{j=1}^n \varphi_j(s_j)$. Then 
$\cl J\subseteq \ker\tilde \varphi$ and hence there exists a unital completely 
positive map
$\varphi: (\oplus_{j=1}^n \cl S_i)/\cl J \rightarrow \cl T$ such that
$$
\nph((s_1,\dots,s_n) + \cl J) = \frac{1}{n} \sum_{j=1}^n \varphi_j(s_j).
$$
It is now elementary to see that 
$\varphi_m =  \varphi \circ \iota_m$ for every $m$. 
Since coproducts are unique up to a complete order isomorphism, the result follows.
\end{proof}

It is easy to verify that coproducts satisfy the associative law. 
The universal property of coproducts ensures that, for the operator systems $\cl S_1,\dots,\cl S_n$, 
there is a canonical C*-algebraic identification
$$
C^*_u(\amalg_{i=1}^n \cl S_i)  \cong  C^*_u(\cl S_1)*\cdots  *C^*_u(\cl S_n),
$$
where $*$ denotes free product amalgamated over the unit. 
In fact, we have the following stronger result.

%Based on Boca's GNS construction 
%\cite{Boca91}, 
%a similar property holds for enveloping C*-algebras for unitary operator systems. Let us recall:

\begin{theorem}\label{th_sg}
Let $\cl S_i$ be an operator subsystem of a C*-algebra $\cl A_i$, $i=1,\dots,n$. Let
$$
\cl S \stackrel{def}{=} {\rm span}\{s_1 + \cdots + s_n: \; s_i \in \cl S_i,\; i=1,\dots,n\} 
\subseteq \cl A_1 * \cdots * \cl A_n.
$$
Then the canonical map $\amalg_{j=1}^n \cl S_j \rightarrow *_{j=1}^n \cl A_j$ associated with the inclusions
$i_m: \cl S_m \hookrightarrow \cl A_m$, $m = 1,\dots,n$, is a unital complete order embedding with image $\cl S$.
If, moreover, each
$\cl S_i$ is spanned by unitaries that generate $\cl A_i$ as a C*-algebra, 
then $\amalg_{i=1}^n \cl S_i$ is spanned by unitaries that generate
$\cl A_1*\cdots * \cl A_n$ as a C*-algebra, and
$$
C^*_e(\amalg_{i=1}^n \cl S_i) \cong \cl A_1*\cdots * \cl A_n.
$$
\end{theorem}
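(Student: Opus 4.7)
The plan is to first establish that the canonical map $\Phi : \amalg_{j=1}^n \cl S_j \to \cl A_1 * \cdots * \cl A_n$, which arises from the universal property of the coproduct applied to the inclusions $\cl S_m \hookrightarrow \cl A_m \hookrightarrow \cl A_1 * \cdots * \cl A_n$, is a complete order embedding with image $\cl S$. The image identification is immediate: since $\amalg_j \cl S_j$ is linearly spanned by the canonical copies $i_m(\cl S_m)$ and $\Phi \circ i_m$ is the natural inclusion $\cl S_m \hookrightarrow \cl A_m \subseteq \cl A_1 * \cdots * \cl A_n$, the image of $\Phi$ is exactly ${\rm span}\{s_1 + \cdots + s_n : s_j \in \cl S_j\}$.

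To show $\Phi$ is a complete order embedding, I would fix a complete order embedding $\pi_0 : \amalg_j \cl S_j \hookrightarrow \cl B(H)$ (which exists by Choi--Effros) and factor it through $\Phi$. Each restriction $\pi_0 \circ i_m : \cl S_m \to \cl B(H)$ extends by Arveson's theorem to a unital completely positive map $\tilde\pi_m : \cl A_m \to \cl B(H)$; by Boca's theorem (Theorem \ref{th_boca}), the $\tilde\pi_m$ combine into a unital completely positive map $\tilde\pi : \cl A_1 * \cdots * \cl A_n \to \cl B(H)$ with $\tilde\pi|_{\cl A_m} = \tilde\pi_m$. Uniqueness in the coproduct universal property then forces $\tilde\pi \circ \Phi = \pi_0$. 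Consequently, if $\Phi^{(k)}(x) \geq 0$ in $M_k(\cl A_1 * \cdots * \cl A_n)$, then $\pi_0^{(k)}(x) = \tilde\pi^{(k)}(\Phi^{(k)}(x)) \geq 0$, and since $\pi_0$ is a complete order embedding, $x \geq 0$.

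The unitary-generation clause in the ``moreover'' part is then immediate: the unitaries spanning $\cl S_m$ lie in $\cl A_m \subseteq \cl A_1 * \cdots * \cl A_n$ and remain unitaries there; together over all $m$ they span $\cl S \cong \amalg_j \cl S_j$ and generate $\cl A_1 * \cdots * \cl A_n$ as a C*-algebra, because the unitaries in $\cl S_m$ generate $\cl A_m$ and the $\cl A_m$'s together generate the free product.

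For the C*-envelope identification, I would show that no proper quotient of $\cl A_1*\cdots*\cl A_n$ is completely isometric on $\amalg_j \cl S_j$. Suppose $\cl K \subseteq \cl A_1*\cdots *\cl A_n$ is a closed ideal such that the quotient map $q$ is completely isometric on $\amalg_j \cl S_j$. Since $q$ is a $*$-homomorphism, it sends the generating unitaries of $\amalg_j \cl S_j$ to unitaries in the quotient that generate the quotient as a C*-algebra. The inverse $q|_{\amalg_j \cl S_j}^{-1}$ is a unital completely positive map from the operator system $q(\amalg_j \cl S_j)$ into $\cl A_1*\cdots*\cl A_n$ sending these generating unitaries back to unitaries; by \cite[Lemma 9.3]{kavruk--paulsen--todorov--tomforde2010} it extends to a $*$-homomorphism $\sigma : (\cl A_1*\cdots*\cl A_n)/\cl K \to \cl A_1*\cdots*\cl A_n$. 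Then $\sigma \circ q$ is a $*$-endomorphism of $\cl A_1*\cdots*\cl A_n$ that restricts to the identity on the generating subspace $\amalg_j \cl S_j$, hence equals the identity, so $q$ is injective and $\cl K = 0$. The main obstacle is precisely this final step: it is the unitary-preservation hypothesis in \cite[Lemma 9.3]{kavruk--paulsen--todorov--tomforde2010} that upgrades a complete order isomorphism into a $*$-isomorphism and thereby rules out a non-trivial Shilov ideal; without this principle, complete isometry of the embedding alone would not be enough to identify the C*-envelope with the free product.
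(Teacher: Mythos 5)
Your argument is correct and follows essentially the same route as the paper: the complete order embedding comes from Arveson extension plus Boca's theorem (the paper phrases this as verifying the coproduct's universal property for $\cl S$, which is equivalent to your formulation by uniqueness of coproducts), and the unitary-generation clause is handled identically. For the C*-envelope identification the paper simply cites \cite[Proposition 5.6]{kavruk2011}, and your Shilov-ideal argument via \cite[Lemma 9.3]{kavruk--paulsen--todorov--tomforde2010} is in substance a proof of that cited proposition, so nothing genuinely different is happening there either.
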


\begin{proof}
We shall prove that $\cl S$ has the desired universal property. 
Let $\cl T\subseteq \cl B(H)$ be an operator system and
$\varphi_m: \cl S_m \rightarrow \cl T$
be a unital completely positive map, $m = 1,\dots,n$.
Let $\tilde \varphi_m: \cl A_m\rightarrow \cl B(H)$
be a unital completely positive extension of $\varphi$ 
and let $\varphi: \cl A_1*\cdots * \cl A_n \rightarrow \cl B(H)$
be the unital completely positive map arising from Theorem \ref{th_boca}. 
Clearly, $\varphi|_{\cl S}$ has the desired properties and has image inside $\cl T$.

Suppose that each $\cl A_m$ is generated by 
a family of unitaries in $\cl S_m$. Since
the free product $\cl A_1*\cdots * \cl A_n$ 
is generated by $\cl A_1 \cup \cdots\cup \cl A_n$, 
it is generated by the unitaries in $\amalg_{i=1}^n \cl S_i$.
The remaining part of the theorem is a direct consequence of \cite[Proposition 5.6]{kavruk2011}.
\end{proof}

\begin{corollary}\label{c_u}
Let $G_i$ be a discrete group generated by the set $\frak{u}_i$, for each $i = 1,\dots,n$.
Set $\frak u = \frak{u}_1 \cup \cdots \cup \frak{u}_n$, 
viewed as a generating set for $G_1\ast\cdots\ast G_n$. Then 
$
\cl S (\frak{u}) = \amalg_{i=1}^n \cl S(\frak{u}_i).
$
\end{corollary}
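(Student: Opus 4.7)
The plan is to obtain this as a direct application of Theorem \ref{th_sg} combined with the standard identification of free products at the group and C*-algebra levels. First, I would note that for each $i$, the operator system $\cl S(\frak u_i) = \mathrm{span}\{1,u,u^* : u\in\frak u_i\}\subseteq C^*(G_i)$ is spanned by unitaries (the elements $u\in \frak u_i$, together with their inverses $u^*$, and the identity $1$), and that these unitaries generate $C^*(G_i)$ as a C*-algebra, since by hypothesis $\frak u_i$ generates $G_i$.

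Next I would invoke Theorem \ref{th_sg} with $\cl A_i = C^*(G_i)$ and $\cl S_i = \cl S(\frak u_i)$. This yields a unital complete order embedding
$$
\amalg_{i=1}^n \cl S(\frak u_i) \;\hookrightarrow\; C^*(G_1) * \cdots * C^*(G_n)
$$
whose image is the operator system
$$
\cl S' \;=\; \mathrm{span}\{s_1 + \cdots + s_n : s_i \in \cl S(\frak u_i),\; i=1,\dots,n\}.
$$

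Then I would use the standard identification of the free product of full group C*-algebras with the full group C*-algebra of the free product, namely $C^*(G_1) * \cdots * C^*(G_n) \cong C^*(G_1 * \cdots * G_n)$. Under this *-isomorphism, the canonical copy of $u\in\frak u_i$ in $C^*(G_i)\subseteq C^*(G_1)*\cdots *C^*(G_n)$ is carried to the corresponding element of $\frak u\subseteq G_1*\cdots *G_n\subseteq C^*(G_1*\cdots *G_n)$. Thus, under this identification,
$$
\cl S' \;=\; \mathrm{span}\{1,u,u^* : u\in\frak u\} \;=\; \cl S(\frak u),
$$
and the complete order embedding of the previous paragraph becomes the desired complete order identification $\amalg_{i=1}^n \cl S(\frak u_i) = \cl S(\frak u)$.

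There is no real obstacle here: the content lies entirely in Theorem \ref{th_sg}, and the only subtle point is to match up the universal C*-algebraic identifications so that the image $\cl S'$ is recognised as $\cl S(\frak u)$. This is immediate once one uses that $\frak u = \frak u_1 \cup\cdots\cup\frak u_n$ as generating sets under $C^*(G_1) * \cdots * C^*(G_n) \cong C^*(G_1 * \cdots * G_n)$.
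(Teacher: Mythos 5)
Your proposal is correct and follows exactly the route the paper takes: the paper's proof of this corollary is simply the one-line observation that the claims follow from Theorem \ref{th_sg}, and your argument is just the careful unpacking of that application (together with the standard identification $C^*(G_1)*\cdots*C^*(G_n)\cong C^*(G_1*\cdots*G_n)$). Nothing further is needed.
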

\begin{proof}
The claims follow from Theorem \ref{th_sg}. 
\end{proof}

Since the characters ({\it i.e.}, 
the one dimensional unitary representations) of $\bb Z_k$ can be identified with
distinct $k^{th}$-roots of unity, the Fourier transform gives a C*-algebraic  
identification $C^*(\bb Z_k) \cong \bb C^k$; see, {\it e.g.},  \cite[Section VII.1]{Davidson}.

\begin{corollary}\label{cor NPOL}
We have a complete order isomorphism
$$
\amalg_{i=1}^n \bb C^k \cong {\rm span} \{ e,a_1^i,a_2^i,\dots,a_n^{i}: i = 1,\dots,k-1\} \subseteq 
C^*(\bb Z_k * \cdots  *\bb Z_k)
$$
where the free product consists of $n$ terms and 
$a^i_j$, $j = 1,\dots,k$, are the canonical generators of $C^*(\bb Z_k)$.
In particular, for every $n$,
\begin{equation}\label{eq_ncn}
\amalg_{i=1}^n \bb C^2 \cong NC(n) \mbox{ unitally and completely order isomorphically}.
\end{equation}
\end{corollary}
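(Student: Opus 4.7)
The plan is to derive this as a direct application of Theorem \ref{th_sg}. First I would invoke the Fourier transform identification $C^*(\bb Z_k) \cong \bb C^k$ recalled in the paragraph preceding the corollary: under this identification, if $a$ denotes a generator of $\bb Z_k$, then its $k$ distinct powers $1, a, a^2,\dots, a^{k-1}$ form a basis of $\bb C^k$ and, being unitaries in a group C$^*$-algebra, generate it as a C*-algebra. Hence $\bb C^k$, viewed as an operator subsystem of itself, satisfies the hypothesis required by the second assertion of Theorem \ref{th_sg}: it is spanned by unitaries that generate the ambient C*-algebra.

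Next I would apply Theorem \ref{th_sg} with $\cl A_i = \cl S_i = \bb C^k$ for $i = 1,\dots,n$. The theorem yields a unital complete order embedding of $\amalg_{i=1}^n \bb C^k$ with image
$$
\cl S = {\rm span}\{s_1 + \cdots + s_n : s_i \in \bb C^k,\; i=1,\dots,n\} \subseteq \bb C^k * \cdots * \bb C^k,
$$
where the free product on the right has $n$ factors. Using the standard fact that the full C*-algebra of a free product of discrete groups is the free product of the corresponding full group C*-algebras, we identify $\bb C^k * \cdots * \bb C^k \cong C^*(\bb Z_k * \cdots * \bb Z_k)$. Under this identification, the copy of $\bb C^k$ inside the $j$-th free factor is spanned by $1$ and the powers $a_j, a_j^2,\dots, a_j^{k-1}$ of the canonical generator $a_j$ of the $j$-th copy of $\bb Z_k$. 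Therefore $\cl S$ is exactly ${\rm span}\{e, a_j^i : j=1,\dots,n,\; i=1,\dots,k-1\}$, establishing the first assertion.

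For the special case $k = 2$, the group $\bb Z_2$ has a single non-trivial element, which maps to a self-adjoint unitary $h$ in $C^*(\bb Z_2) \cong \bb C^2$; thus $\bb C^2 = {\rm span}\{1, h\}$. Applying the first assertion with $k=2$ then identifies $\amalg_{i=1}^n \bb C^2$ unitally and completely order isomorphically with ${\rm span}\{1, h_1,\dots,h_n\} \subseteq C^*(\ast_{j=1}^n \bb Z_2)$, which is by definition $NC(n)$.

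There is essentially no obstacle beyond these bookkeeping identifications; the substance has already been absorbed into Theorem \ref{th_sg} (whose proof in turn rests on Boca's amalgamation theorem for unital completely positive maps, cited as Theorem \ref{th_boca}). The only subtlety worth highlighting in the write-up is that one must check that the canonical generator of $C^*(\bb Z_k)$ really becomes a \emph{unitary} element of $\bb C^k$ under the Fourier transform, which is automatic since group elements are always unitaries in the full group C*-algebra, so the hypothesis of Theorem \ref{th_sg} concerning spanning by unitary generators is genuinely satisfied.
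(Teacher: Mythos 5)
Your proof is correct and is essentially the paper's argument: the paper routes the statement through Corollary \ref{c_u} (taking each $G_i=\bb Z_k$ with all non-identity elements as generators), which is itself just Theorem \ref{th_sg} specialised to group C*-algebras, so applying Theorem \ref{th_sg} directly as you do amounts to the same thing. The only cosmetic remark is that the complete order isomorphism claim needs only the first assertion of Theorem \ref{th_sg}, so the verification that $\bb C^k$ is spanned by generating unitaries, while true, is not actually required here.
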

\begin{proof} The assertion
follows from Corollary \ref{c_u} by setting each $G_i = \bb Z_k$.
\end{proof}

As the unital complete order isomorphism in Corollary \ref{cor NPOL} is based on
the Fourier transform, it will be convenient to have a more concrete realisation.
Consider $NC(n)$ with its standard basis $\{ e,h_1,\dots,h_n \}$, where $h_k$ is a 
universal selfadjoint contraction, $k = 1,\dots,n$. 
Let 
$$
p_k = \frac{1+h_k}{2}, \mbox{ so that } p_k^{\perp} = \frac{1-h_k}{2}, \;\; k=1,\dots,n.
$$ 
As usual, $\{e_k\}_{k=1}^{2n}$ denotes the standard basis of $\bb C^{2n}$.

\begin{theorem}\label{th_anoq}
Let $\theta : \bb C^{2n} \to NC(n) $ be the linear map given by
$$\theta(e_{2k-1}) = \frac{1}{n} p_k, \ \theta(e_{2k}) = \frac{1}{n} p_k^{\perp}, \ k = 1,\dots,n.$$
Then $\theta$ is a completely positive complete quotient map onto $NC(n)$ with kernel 
$$\cl J_n = {\rm span}\{(e,-e,0,\dots,0),(e,0,-e,\dots,0),\dots,(e,0,0,\dots,-e)\}.$$
\end{theorem}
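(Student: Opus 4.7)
The plan is to deduce the theorem directly from the coproduct machinery developed above, namely Corollary \ref{cor NPOL}, which identifies $NC(n)$ with $\amalg_{i=1}^n \bb{C}^2$, together with Theorem \ref{thm coproduct}, which realises the coproduct as a quotient of the direct sum by a null subspace. The whole argument is essentially the identification of $\theta$ with the canonical quotient map arising from these two results.

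First, I would identify $\bb{C}^{2n}$ with the direct sum $\bb{C}^2\oplus\cdots\oplus\bb{C}^2$ of $n$ copies of $\bb{C}^2$, so that $e_{2k-1}$ and $e_{2k}$ become the two standard basis vectors of the $k^{th}$ summand. Under the Fourier isomorphism $\bb{C}^2\cong C^*(\bb{Z}_2)=NC(1)=\mathrm{span}\{1,h\}$, the first standard basis vector of $\bb{C}^2$ corresponds to the projection $p=(1+h)/2$ and the second to $p^{\perp}=(1-h)/2$. Under the coproduct identification $\amalg_{i=1}^n\bb{C}^2\cong NC(n)$ from Corollary \ref{cor NPOL}, the canonical embedding $i_m\colon\bb{C}^2\cong NC(1)\hookrightarrow NC(n)$ sends $h$ to $h_m$, hence $p\mapsto p_m$ and $p^{\perp}\mapsto p_m^{\perp}$.

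Next, Theorem \ref{thm coproduct} realises $\amalg_{i=1}^n\bb{C}^2$ as the operator system quotient $(\oplus_{i=1}^n\bb{C}^2)/\cl J$, where $\cl J$ is the indicated null subspace. Writing the generators of $\cl J$ in $\bb{C}^{2n}$-coordinates (with $e$ the unit of each $\bb{C}^2$-summand) produces exactly the space $\cl J_n$ of the theorem. The canonical quotient map $q\colon\bb{C}^{2n}\to NC(n)$ can then be expressed using the telescoping identity
$$(s_1,\dots,s_n)+\cl J=\sum_{m=1}^n\tfrac{1}{n}\bigl(0,\dots,0,n s_m,0,\dots,0\bigr)+\cl J=\sum_{m=1}^n\tfrac{1}{n}\iota_m(s_m),$$
where $\iota_m$ is the embedding from Theorem \ref{thm coproduct}, which coincides with $i_m$ under the isomorphism with $NC(n)$. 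Applied to the basis vector $e_{2m-1}$, which lies in the $m^{th}$ summand and corresponds to $p\in NC(1)$, this gives $q(e_{2m-1})=\tfrac{1}{n}i_m(p)=\tfrac{1}{n}p_m$, and likewise $q(e_{2m})=\tfrac{1}{n}p_m^{\perp}$. Hence $q=\theta$.

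Finally, since $q$ is the canonical projection of an operator system onto its quotient by a kernel, it is unital, completely positive, and a complete quotient map with kernel $\cl J$; translating through the identifications, $\theta$ is a complete quotient map onto $NC(n)$ with kernel $\cl J_n$. There is no substantial obstacle here beyond careful bookkeeping; the one point that requires attention is the scalar factor $1/n$, which arises from the normalisation $s\mapsto(0,\dots,ns,\dots,0)+\cl J$ in Theorem \ref{thm coproduct} and matches precisely the $1/n$ in the definition of $\theta$.
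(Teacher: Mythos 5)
Your proposal is correct and follows essentially the same route as the paper's proof: both identify $NC(n)$ with $\amalg_{i=1}^n\bb{C}^2$ via Corollary \ref{cor NPOL}, realise the coproduct as $\bb{C}^{2n}/\cl J_n$ via Theorem \ref{thm coproduct}, and check on generators that $\theta$ is the resulting canonical quotient map (the paper computes the inverse isomorphism $\nph$ with $\nph(p_k)=ne_{2k-1}+\cl J_n$, which is the same calculation in the opposite direction). No gaps.
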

\begin{proof}
Write $i_k$ for the canonical inclusion of the $k$th copy of $\bb{C}^2$ into 
$\amalg_{j=1}^n \bb C^2$, and
$\{f_1,f_2\}$ for the standard basis of $\bb{C}^2$.
By (\ref{eq_ncn}), we have a canonical complete order unital 
isomorphism $NC(n) \cong \amalg_{j=1}^n \bb C^2$, 
under which the element $h_k\in NC(n)$ is identified 
with the element $i_k(f_1 - f_2)\in \amalg_{j=1}^n \bb C^2$.

On the other hand, by Theorem \ref{thm coproduct}, 
there is a unital complete order isomorphism
$\nph : \amalg_{j=1}^n \bb C^2 \to \bb{C}^{2n}/\cl J_n$, such that 
$\nph \circ i_k = \iota_k$ (where $\iota_k$ is the 
order embedding defined in Theorem \ref{thm coproduct}). 
Now note that 
$$\nph(h_k) = \nph(i_k(f_1 - f_2)) = n(e_{2k-1} - e_{2k}) + \cl J_n.$$
On the other hand, 
$$\nph(1) = \nph(i_k(f_1 + f_2)) = n(e_{2k-1} + e_{2k}) + \cl J_n.$$
It follows that 
$$\nph(p_k) = n e_{2k-1} \ \ \mbox{ and } \ \ \nph(p_k^{\perp}) = n e_{2k}.$$
Thus, the map $\theta$ is the inverse of $\nph$ and the proof is complete. 
\end{proof}

%\begin{corollary}\label{cor NChasLP}
%$NC(n)$ has the lifting property for every $n$. In particular, for every $n$, 
%we have a canonical complete order isomorphism
%$$
%NC(n) \otimes_{\min} \cl B(H) = NC(n) \otimes_{\max} \cl B(H)
%$$
%for every Hilbert space $H$.
%\end{corollary}

It is now possible to give another proof of Corollary~\ref{p_llp}.
%\begin{proof}
Since $NC(n) = \amalg_{i=1}^n \bb C^2 = \bb C^{2n}/\cl J$, where $\cl J$ is the 
null subspace as in Theorem \ref{thm coproduct}, and the
lifting property is preserved under quotients by null subspaces 
\cite[Theorem 6.9]{kavruk2011}, the lifting result follows. The tensor 
identity follows from
the local lifting property criteria given in 
\cite[Theorem 8.5]{kavruk--paulsen--todorov--tomforde2010}.
%\end{proof}

Our next aim is to represent $NC(n)$ as a quotient of $\bb{C}^{2n}$ 
in a different way than the one exhibited in Theorem \ref{th_anoq}. 
We first note a general fact about quotients.

\begin{proposition}\label{p_qq}
Let $\cl S$ be a finite dimensional operator system, $\cl J_1\subseteq \cl S$ be a null space,
$q : \cl S\to \cl S/\cl J_1$ be the quotient map and $\cl J_2\subseteq \cl S/\cl J_1$ be a null space. 
Then $q^{-1}(\cl J_2)$ is a null space in $\cl S$ and 
$(\cl S/\cl J_1)/\cl J_2$ is canonically completely order isomorphic to $\cl S/q^{-1}(\cl J_2)$. 
\end{proposition}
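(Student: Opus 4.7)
The plan is to set $\cl K := q^{-1}(\cl J_2)$ and build the canonical complete order isomorphism $\cl S/\cl K \cong (\cl S/\cl J_1)/\cl J_2$ out of the iterated quotient map $\Phi := q_{\cl J_2}\circ q$, where $q_{\cl J_2} : \cl S/\cl J_1 \to (\cl S/\cl J_1)/\cl J_2$ is the canonical projection.

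First I would verify that $\cl K$ is a null space in $\cl S$. Take a positive element $x\in \cl K$. Since $q$ is unital completely positive, $q(x)$ is a positive element of $\cl J_2$; as $\cl J_2$ is null, $q(x)=0$, hence $x\in \ker q = \cl J_1$. But $\cl J_1$ is null, so $x=0$. Thus $\cl K$ contains no positive element besides $0$, so it is a null space, and by the Kavruk result cited in the excerpt, it is in particular a kernel, so the quotient operator system $\cl S/\cl K$ makes sense.

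Next, $\Phi$ is unital completely positive with $\ker\Phi=\cl K$, so by the universal property of the quotient it factors through a unital completely positive bijection $\dot\Phi : \cl S/\cl K \to (\cl S/\cl J_1)/\cl J_2$. The crux is to show $\dot\Phi$ is a complete order isomorphism, equivalently that $\Phi$ itself is a complete quotient map. For this I would appeal to Proposition~\ref{cqchar}: given $n$ and a strongly positive $z\in M_n((\cl S/\cl J_1)/\cl J_2)$, the fact that $q_{\cl J_2}$ is a complete quotient map produces a strongly positive $y\in M_n(\cl S/\cl J_1)$ with $q_{\cl J_2}^{(n)}(y)=z$, and then the fact that $q$ is a complete quotient map produces a strongly positive $x\in M_n(\cl S)$ with $q^{(n)}(x)=y$; then $\Phi^{(n)}(x)=z$. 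So compositions of complete quotient maps are complete quotient maps, and in particular $\Phi$ is one.

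By the definition of complete quotient map, the induced $\dot\Phi$ is a complete order isomorphism, establishing the canonical identification. I expect no genuine obstacle here: the null-space hypotheses are used only in Step~1 (to upgrade the kernel $\cl K$ from ``kernel'' to ``null space''), while the complete order identification is a formal consequence of the two-step lifting via Proposition~\ref{cqchar}. The finite-dimensionality of $\cl S$ plays no essential role in the argument beyond keeping us in the setting where null-space quotients are well behaved as in \cite{kavruk2011}.
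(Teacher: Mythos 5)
Your proof is correct. The verification that $q^{-1}(\cl J_2)$ is a null space is word-for-word the paper's argument. For the complete order isomorphism, however, you take a different (and arguably cleaner) route: you factor the composite $\Phi = q_{\cl J_2}\circ q$ through $\cl S/\cl K$ and invoke Proposition~\ref{cqchar} twice to lift strongly positive elements, i.e., you use the fact that a composition of complete quotient maps is a complete quotient map --- an argument the paper itself deploys in Proposition~\ref{p_quo} but not here. The paper instead constructs the map in the opposite direction, $\psi : (\cl S/\cl J_1)/\cl J_2 \to \cl S/q^{-1}(\cl J_2)$, induced from $x+\cl J_1 \mapsto x + q^{-1}(\cl J_2)$, and verifies that $\psi^{-1}$ is completely positive by unwinding the $\epsilon$-characterisation of positivity in an operator system quotient: a positive $(x_{ij}+q^{-1}(\cl J_2))_{i,j}$ admits, for each $\epsilon>0$, corrections $y_{ij}\in q^{-1}(\cl J_2)$ with $(x_{ij}+y_{ij})_{i,j}+\epsilon 1_n$ positive in $M_n(\cl S)$, and pushing this forward by $q$ exhibits the required positivity in $M_n((\cl S/\cl J_1)/\cl J_2)$. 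The two approaches buy essentially the same thing; yours is more modular and reuses the strongly-positive-lifting machinery already set up, while the paper's works directly with the definition of the quotient cones and makes the canonical identification of the two spaces slightly more explicit. One cosmetic remark: you do not actually need the cited result that null subspaces are kernels to form $\cl S/\cl K$, since $\cl K = \ker\Phi$ for the unital completely positive map $\Phi$ and is therefore a kernel by definition; the null-space property is only the extra conclusion of the statement.
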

\begin{proof}
Note that $q^{-1}(\cl J_2)$ is finite dimensional as both $\cl J_1$ and $\cl J_2$ are. Let $y\in q^{-1}(\cl J_2)$ 
be such that $y\in \cl S_+$. Then $q(y) \in \cl J_2$ and $q(y) \in (\cl S/\cl J_1)_+$; 
since $\cl J_2$ is a null space, $q(y) = 0$, or
$y\in \cl J_1$. Since $\cl J_1$ is a null space, we have that $y = 0$. 

Since $\cl J_1\subseteq q^{-1}(\cl J_2)$, there exists a unital completely positive map $\phi : \cl S/\cl J_1\to \cl S/q^{-1}(\cl J_2)$
given by $\phi(x +\cl  J_1) = x + q^{-1}(\cl J_2)$, $x\in \cl S$. 
If $q(x) = x + \cl J_1\in \cl J_2$ then $x\in q^{-1}(\cl J_2)$; thus, $\cl J_2\subseteq \ker\phi$. Let 
$\psi : (\cl S/\cl J_1)/\cl J_2\to \cl S/ q^{-1}(\cl J_2)$ be the induced untal completely positive map. 
The map $\psi$ is bijective, and it remains to show that $\psi^{-1}$ is completely positive. To this end, 
let $(x_{ij} + q^{-1}(\cl J_2))_{i,j}\in M_n(\cl S/q^{-1}(\cl J_2))_+$. This means that for every $\epsilon > 0$ there exist
$y_{ij}\in q^{-1}(\cl J_2)$ such that $(x_{ij} + y_{ij})_{i,j} + \epsilon 1_n \in M_n(\cl S)_+$. 
%Then $(x_{ij} + y_{ij} + J_1)_{i,j} + \epsilon 1_n \in M_n(\cl S/J_1)_+$. 
It follows that $(x_{ij} + y_{ij} + \cl J_1)_{i,j} + \epsilon 1_n \in M_n(\cl S/\cl J_1)_+$, which shows that $\psi$ is a complete order isomorphism. 
\end{proof}

The following is easy to verify. 

\begin{lemma}\label{l_su}
Let $\cl S$ and $\cl T$ be operator systems and $\cl J\subseteq \cl S$ be a kernel. 
Then $\cl J\oplus 0$ is a kernel in $\cl S\oplus \cl T$ and,
up to a complete order isomorphism,
$(\cl S/\cl J)\oplus \cl T = (\cl S\oplus \cl T)/(\cl J\oplus 0)$.
\end{lemma}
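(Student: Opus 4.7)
The plan is to first verify that $\cl J\oplus 0$ is a kernel in $\cl S\oplus\cl T$, then produce the asserted complete order isomorphism and, finally, check that its inverse is completely positive by unwinding the characterisation of the quotient positive cones recalled earlier in the paper.

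First I would prove the kernel claim. Since $\cl J$ is a kernel of $\cl S$, there exist an operator system $\cl R$ and a completely positive map $\phi:\cl S\to\cl R$ with $\ker\phi=\cl J$. Consider the completely positive map
\[
\phi\oplus\id_{\cl T}:\cl S\oplus\cl T\to\cl R\oplus\cl T,\quad (s,t)\mapsto(\phi(s),t).
\]
Its kernel is exactly $\cl J\oplus 0$, so $\cl J\oplus 0$ is a kernel in $\cl S\oplus\cl T$.

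Next I would produce the candidate isomorphism. The map $\Phi:\cl S\oplus\cl T\to(\cl S/\cl J)\oplus\cl T$ defined by $\Phi(s,t)=(q_{\cl J}(s),t)$ is unital and completely positive with $\ker\Phi=\cl J\oplus 0$, so it factors through a unital completely positive bijection $\tilde\Phi:(\cl S\oplus\cl T)/(\cl J\oplus 0)\to(\cl S/\cl J)\oplus\cl T$. The remaining, and main, task is to verify that $\tilde\Phi^{-1}$ is completely positive.

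To do this, I would take $X=(a+M_n(\cl J),b)\in M_n((\cl S/\cl J)\oplus\cl T)_+$, where positivity in an operator system direct sum amounts to positivity of each component. Given $\epsilon>0$, the characterisation of the quotient positive cone (quoted after the definition of a kernel) yields $j\in M_n(\cl J)$ with $a+j+\epsilon 1_n\in M_n(\cl S)_+$; then
\[
(a+j,b)+\epsilon 1_n=(a+j+\epsilon 1_n,\, b+\epsilon 1_n)\in M_n(\cl S\oplus\cl T)_+,
\]
since $b\in M_n(\cl T)_+$. As $(j,0)\in M_n(\cl J\oplus 0)$, this exhibits $(a,b)+M_n(\cl J\oplus 0)$ as a positive element of $M_n((\cl S\oplus\cl T)/(\cl J\oplus 0))$, completing the proof. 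The only nontrivial point is this last unwinding of the quotient positive cone; otherwise the argument is entirely formal, so I do not anticipate a genuine obstacle beyond careful bookkeeping.
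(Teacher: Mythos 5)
Your proof is correct and complete; the paper itself omits the argument (the lemma is prefaced only by ``The following is easy to verify''), and your verification — realising $\cl J\oplus 0$ as $\ker(\phi\oplus\id_{\cl T})$, factoring $q_{\cl J}\oplus\id_{\cl T}$ through the quotient, and unwinding the $\epsilon$-characterisation of the quotient cones componentwise — is exactly the intended routine check.
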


\begin{theorem}\label{th_nc}
Let $e$ be the unit of $\bb{C}^2$ and, for $n\in \bb{N}$, let
$$\cl Q_n = {\rm span}\{(e,-e,0,0,\dots),(e,e,-e,0,\dots),\dots,(e,\dots,e,-e)\}\subseteq \bb{C}^{2n}.$$
Then $\cl Q_n$ is a null subspace of $\bb{C}^{2n}$ and 
$NC(n)$ is completely order isomorphic to $\bb{C}^{2n}/\cl Q_n$.
\end{theorem}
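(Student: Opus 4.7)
My plan is to proceed by induction on $n$, using the coproduct description of $NC(n)$ from Corollary~\ref{cor NPOL} together with Proposition~\ref{p_qq} to iterate operator system quotients. The base case $n=1$ is trivial, since $\cl Q_1=\{0\}$ and $\bb{C}^2\cong NC(1)$.

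For the inductive step, the key observation is that the first $n-2$ generators of $\cl Q_n$ lie in $\bb{C}^{2(n-1)}\oplus 0$ and span exactly $\cl Q_{n-1}\oplus 0$, while the remaining generator $v_{n-1}=(\tilde e,-e)$ (with $\tilde e$ and $e$ the units of $\bb{C}^{2(n-1)}$ and $\bb{C}^2$, respectively) is the only one touching the final block. Applying Lemma~\ref{l_su} together with the inductive hypothesis yields a complete order isomorphism
$$\bb{C}^{2n}/(\cl Q_{n-1}\oplus 0)\;\cong\;NC(n-1)\oplus \bb{C}^2,$$
under which the image of $v_{n-1}$ is $(1_{NC(n-1)},-1_{\bb{C}^2})$, since operator system quotient maps are unital. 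The two-summand case of Theorem~\ref{thm coproduct} then identifies the further quotient by the span of $(1,-1)$ with the coproduct $NC(n-1)\amalg \bb{C}^2$, and by Corollary~\ref{cor NPOL} combined with associativity of coproducts this coproduct equals $\amalg_{i=1}^n \bb{C}^2 \cong NC(n)$.

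To combine the two successive quotients I would apply Proposition~\ref{p_qq}. This is where the verification $q^{-1}(\mathrm{span}\{(1,-1)\})=\cl Q_n$ enters, where $q:\bb{C}^{2n}\to NC(n-1)\oplus \bb{C}^2$ denotes the first quotient map. The inclusion $\supseteq$ is immediate since $q(v_{n-1})=(1,-1)$ and $q(v_k)=0$ for $k<n-1$. For $\subseteq$, any preimage $x$ must have last block equal to $-te$ for some scalar $t$ and first $2(n-1)$ coordinates congruent to $t\tilde e$ modulo $\cl Q_{n-1}$, giving $x-tv_{n-1}\in\cl Q_{n-1}\oplus 0\subseteq \cl Q_n$. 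Proposition~\ref{p_qq} then simultaneously delivers the nullity of $\cl Q_n$ and the asserted complete order isomorphism $\bb{C}^{2n}/\cl Q_n\cong NC(n)$. I anticipate this preimage calculation to be the only step demanding any care; everything else is a routine application of the preceding structural results.
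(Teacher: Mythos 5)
Your proposal is correct and follows essentially the same route as the paper's proof: induction using $NC(n)=NC(n-1)\amalg\bb{C}^2$, Lemma~\ref{l_su}, the quotient description of the two-fold coproduct, and Proposition~\ref{p_qq} to collapse the iterated quotient, with the identification $q^{-1}({\rm span}\{(1,-1)\})=\cl Q_n$ verified just as you describe. The only (harmless) differences are that you start the induction at the trivial case $n=1$ rather than citing the $n=2$ case from the literature, and you invoke the two-summand case of Theorem~\ref{thm coproduct} where the paper cites an external reference for the same fact.
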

\begin{proof}
Let $\tilde{e}$ be the unit of $\bb{C}^{2n-2}/\cl Q_{n-1}$. We use induction. 
The case $n = 2$ is in \cite{kavruk2012}. 
Assuming $\cl Q_{n-1}$ is a null subspace of $\bb{C}^{2n-2}$, we have that 
$\cl Q_{n-1}\oplus 0$ is a null subspace of $\bb{C}^{2n}$. 
Let $q : \bb{C}^{2n} \to \bb{C}^{2n} / (\cl Q_{n-1}\oplus 0)$ be the quotient map. 
Then $I \stackrel{def}{=} {\rm span}\{(e,\dots,e,-e) + (\cl Q_{n-1}\oplus 0)\}$ is easily seen to be a null space in $\bb{C}^{2n} / (\cl Q_{n-1}\oplus 0)$ and 
$\cl Q_n = q^{-1}(I)$; by Proposition \ref{p_qq}, $\cl Q_n$ is a null space in $\bb{C}^{2n}$.

Now, using successively the definition of $NC(n)$, the induction hypothesis, \cite[Proposition 4.7]{kavruk2012}, 
Lemma \ref{l_su} and Proposition \ref{p_qq}, we have 
\begin{eqnarray*}
NC(n) & = & NC(n-1) \amalg \bb{C}^2 = (\bb{C}^{2n-2}/\cl Q_{n-1}) \amalg \bb{C}^2\\
& = & 
((\bb{C}^{2n-2}/\cl Q_{n-1})\oplus \bb{C}^2)/ {\rm span}\{(\tilde{e},-e)\}\\ & = & 
\left(\bb{C}^{2n}/(\cl Q_{n-1}\oplus 0)\right)/  I = 
\bb{C}^{2n}/\cl Q_n.
\end{eqnarray*}
\end{proof}

We note that the kernels $\cl J_n$ and $\cl Q_n$ are different, hence Theorems 
\ref{th_anoq} and \ref{th_nc} provide two distinct realisations of 
$NC(n)$ as a quotient of $\bb{C}^{2n}$.

By a matrix 
operator system we shall mean an operator subsystem $\cl S$ of a matrix algebra.
It was observed in \cite[Proposition 5.13]{fkpt} that the non-commutative cube
$NC(n)$ 
can be realised as the dual of a matrix operator system.
We next provide a multivariable version of 
\cite[Proposition 5.11]{fkpt}; that latter result
identified $NC(2)$ with a dual of a {\it diagonal} matrix operator system. 
This result can also be found in Ozawa \cite{ozawa2013}.

\begin{theorem}
Let
$$
\cl R_{n,k}  = \{ (a_1^1,\dots,a_k^1,\dots,a_1^n,\dots,a_k^n) \in \bb{C}^{nk} : 
\sum_{i=1}^k a_i^l = \sum_{i=1}^k a_i^m, \mbox{ for all } l,m   \}.
$$
Then $\cl R_{n,k}^* \cong \amalg_{i=1}^n \bb C^k$ unitally and completely order isomorphically.
In particular, if
$$
\cl R = \{ (a_1,a_2,\dots,a_{2n-1},a_{2n})\in \bb{C}^{2n} : a_1 + a_2 = \cdots = a_{2n-1} + a_{2n}\} 
$$
then $\cl R^* = NC(n)$ unitally and completely order isomorphically. 
\end{theorem}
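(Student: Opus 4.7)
The plan is to derive this identification from the realisation of $\amalg_{i=1}^n \bb C^k$ as a quotient of $\bb C^{nk}$, combined with the standard duality between operator system quotients and annihilator subsystems in finite dimensions.

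First, I would recall from Theorem \ref{thm coproduct} (equivalently, Corollary \ref{cor NPOL}) that there is a unital complete order isomorphism $\amalg_{i=1}^n \bb C^k \cong \bb C^{nk}/\cl J$, where
$$\cl J = {\rm span}\{(e,-e,0,\dots,0),\,(e,0,-e,0,\dots,0),\,\dots,\,(e,0,\dots,0,-e)\}$$
and $e = (1,\dots,1)$ is the unit of $\bb C^k$; moreover $\cl J$ is a null subspace by the proof of Theorem \ref{thm coproduct}.

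Second, I would invoke the general duality principle for finite-dimensional operator systems: if $\cl S$ is finite-dimensional and $\cl J\subseteq \cl S$ is a null subspace, then the canonical pairing $\cl S\times \cl S^d\to\bb C$ identifies $(\cl S/\cl J)^d$ (equipped with its dual operator system structure) unitally and completely order isomorphically with the annihilator $\cl J^\perp\subseteq \cl S^d$. This is essentially because a completely positive map $\cl S/\cl J\to M_n$ lifts canonically to a completely positive map on $\cl S$ vanishing on $\cl J$ (using the Choi--Effros dualisability of finite-dimensional operator systems and the characterisation of $M_n(\cl S^d)_+$ as the completely positive maps $\cl S\to M_n$ given in Section \ref{s_prel}).

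Third, I would identify $(\bb C^{nk})^d$ with $\bb C^{nk}$ via the standard bilinear pairing $\langle x,y\rangle = \sum_{i,j} x_i^j y_i^j$, with a suitable faithful state as the order unit. Under this identification, an element $(a_1^1,\dots,a_k^n)$ lies in $\cl J^\perp$ if and only if it pairs to zero with every generator of $\cl J$, which gives exactly the conditions $\sum_{i=1}^k a_i^l = \sum_{i=1}^k a_i^m$ for all $l,m$; hence $\cl J^\perp = \cl R_{n,k}$. Combining the previous two steps,
$$(\amalg_{i=1}^n \bb C^k)^d \cong (\bb C^{nk}/\cl J)^d \cong \cl R_{n,k},$$
and double duality (which holds unitally and completely order isomorphically in finite dimensions) gives $\cl R_{n,k}^d \cong \amalg_{i=1}^n \bb C^k$. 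The special case $k=2$ combined with (\ref{eq_ncn}) gives $\cl R^d \cong NC(n)$.

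The main obstacle I anticipate is pinning down the unital part of the isomorphism: the dual operator system $(\bb C^{nk})^d$ requires fixing a faithful state as order unit, and one must check that under the identification $(\bb C^{nk})^d\cong \bb C^{nk}$ the unit corresponds (up to a positive rescaling, which is harmless) to a multiple of $(1,\dots,1)$, so that the inherited unit on $\cl J^\perp$ matches the unit $(1,\dots,1)\in\cl R_{n,k}$ prescribed in the statement. Once this calibration is done, the matricial cones transfer automatically because positivity of functionals on the commutative algebra $\bb C^{nk}$ coincides with diagonal positivity, and every positive map into $M_n$ with commutative domain is completely positive.
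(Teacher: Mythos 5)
Your proposal is correct and follows essentially the same route as the paper: realise $\amalg_{i=1}^n\bb C^k$ as $\bb C^{nk}/\cl J$, use the fact that the adjoint of a complete quotient map is a complete order embedding (so the dual of the quotient is the annihilator $\cl J^\perp=\cl R_{n,k}$ inside $(\bb C^{nk})^d\cong\bb C^{nk}$), fix the sum-of-entries functional as the order unit, and conclude by finite-dimensional reflexivity. The paper's proof is just a terser version of exactly this argument, citing \cite[Proposition 2.7]{kavruk2011} for the duality step.
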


\begin{proof}
%This is based on the identification $[\bb C^{m}]^* \cong \bb C^{m}$ as matricially ordered spaces.
Since
$\amalg_{i=1}^n \bb C^k = (\bb C^k \oplus \cdots \oplus \bb C^k) / \cl J \cong  \bb C^{nk}/J$ where 
$\cl J$ is the null subspace defined in Theorem \ref{thm coproduct}, 
and, by \cite[Proposition 2.7]{kavruk2011}, the adjoint of a complete 
quotient map is a complete order embedding, it follows that
$$
(\amalg_{i=1}^n \bb C^k)^* \hookrightarrow (\bb C^{nk})^* \cong \bb C^{nk}.
$$
Clearly the image of this map is precisely the subspace described in the theorem. Moreover,
by defining the Archimedean order unit of $(\bb C^{2n})^*$ to be the positive linear functional  which 
maps each element to the sum of its entries, it is
elementary to see that the identification is unital.  
\end{proof}

%%%%%%%%%%%%%%%%%%%%
%%%%%%%%%%%%%%%%%%%%%%%%%%%%%%%%%%%%%%%%%%
%%%%%%%%%%%%%%%%%%%%%%%

\section{The Riesz decomposition property}\label{s_rdp}

In \cite{kavruk2012} the second author characterised WEP in terms of a relative non-commutative Riesz interpolation property. 
In this section we characterise WEP in terms of 
a relative non-commutative Riesz decomposition property. 
Recall that an element $s$ in an operator system is called strongly
positive, denoted by $s \gg 0$, if $s \geq \delta e$ for some $\delta>0$. 
By $s \gg t$, we mean $s - t \gg 0$, and by $s_1,s_2 \gg t$, we mean 
$s_1 \gg t$ and $s_2 \gg t$.

%We first state the definition of the property.

\begin{definition}\label{d_rds}
Let $\cl B$ be a unital C*-algebra. 

(i) A ordered tuple $(a_1,a_2,b_1,\dots,b_n,x_1,\dots,x_n)$ of 
elements of $\cl B$, where $x_1,\dots,x_n$ are strongly positive, 
%\marginpar{\tiny positiviy of $x_1,\dots,x_n$ added}
will be called a \emph{Riesz decomposition scheme}
if 
\begin{enumerate}
 \item $a_1 \gg x_1 + \cdots + x_n$
 \item $a_2 \gg x_1 + \cdots + x_n$
 \item $x_i \gg b_i$, $i=1,\dots,n$.
\end{enumerate}

(ii) Let $\cl A$ be a unital C*-subalgebra of $\cl B$. We say that $\cl A$ has 
the \emph{$n$-Riesz decomposition property in $\cl B$}
if, whenever 
$(a_1,a_2,b_1,\dots,b_n,x_1,\dots,x_n)$ is a Riesz decomposition scheme, 
where $a_1,a_2,b_1,\dots,b_n \in \cl A$ and $x_1,\dots,x_n \in \cl B$, then there
exist strongly positive elements
$y_1,\dots,y_n\in \cl A$ 
such that 
$(a_1,a_2,b_1,\dots,b_n,$ $y_1,\dots,y_n)$ is a Riesz decomposition scheme.

We say that $\cl A$ has the \emph{complete $n$-Riesz decomposition property in $\cl B$} 
if $M_k(\cl A)$ has the $n$-Riesz decomposition property in $M_k(\cl B)$ for every $k$.
\end{definition}

Although the definition is given for an arbitrary pair of C*-algebras $\cl A$ and $\cl B$ 
with $\cl A \subseteq \cl B$, 
we will be mostly concerned with the case where $\cl B$ is injective. In fact, injective objects
are universal in the sense that if $\cl A$ is represented as a unital C*-subalgebra of 
both $\cl B_1$ and $\cl B_2$, where $\cl B_1$ and $\cl B_2$ are injective, 
and $\cl A$ has the complete $n$-Riesz decomposition property in $\cl B_1,$ 
then $\cl A$ has the complete $n$-Riesz decomposition in $\cl B_2.$
This follows from a straightforward application
of Arveson's extension theorem. In particular, we see that $\cl A$ has the complete $n$-Riesz decomposition property in an injective C*-algebra $\cl B$ if and only if $\cl A$ has the complete $n$-Riesz decomposition in $I(\cl A).$

In the sequel, 
the non-commutative cubes will be identified with the matrix quotients via 
Theorem \ref{th_anoq}:
\begin{equation}\label{eq_ida}
NC(n) = \amalg_{i=1}^n \bb C^2 \cong \bb C^{2n} / \cl J_n
\end{equation}
where $\cl J_n$ denote the $(n-1)$-dimensional null-subspace
$$
{\rm span}\{ (1,1,-1,-1,0,...,0), (1,1,0,0,-1,-1,0,...,0),...,(1,1,0,...,0,-1,-1)\}.
$$
If $x\in \bb{C}^{2n}$, we let $\dot{x} = x + \cl J_n$ be the image of $x$ under 
the quotient map; note that via the identification (\ref{eq_ida}), 
$\dot{x}$ can be viewed as an element of $NC(n)$. 

We recall that $\{e_i\}_{i=1}^{2n}$ is the standard basis of $\bb{C}^{2n}$.
It is elementary to verify that $\{\dot{e}_1,\dot{e}_2,\dot{e}_3, \dot{e}_5 , ... ,\dot{e}_{2n-1}\}$ 
is a basis for $NC(n)$.
We will need the following positivity criteria for elements in an operator system 
of the form
$\cl S\otimes_{\max} NC(n)$.

\begin{proposition}\label{prop pos-crit}
Let $\cl S$ be an operator system and 
$$
u = s_1 \otimes \dot{e}_1 + s_2 \otimes \dot{e}_2 - s_3 \otimes \dot{e}_3 - s_5 \otimes \dot{e}_5 - \cdots - s_{2n-1} \otimes \dot{e}_{2n-1} \in \cl S \otimes NC(n).
$$ 
Then $u \gg 0$ in $\cl S \otimes_{\max} NC(n)$
if and only if there are strongly positive elements $x_3,x_5,\dots,x_{2n-1}$ in $\cl S$ such that
$(s_1,s_2,s_3,s_5\dots,s_{2n-1},x_3,x_5,\dots,x_{2n-1})$ is a Riesz decomposition 
scheme. 
\end{proposition}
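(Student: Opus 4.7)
The plan is to translate strong positivity of $u$ in $\cl S\otimes_{\max}NC(n)$ into the existence of a suitable strongly positive lift in $\cl S\otimes_{\max}\bb{C}^{2n}$, and then read off the Riesz decomposition conditions from the coefficients of such a lift in the basis $\{\dot{e}_1,\dot{e}_2,\dot{e}_3,\dot{e}_5,\dots,\dot{e}_{2n-1}\}$ of $NC(n)$.

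First I would invoke Theorem \ref{th_anoq}, which says $\theta:\bb{C}^{2n}\to NC(n)$ is a complete quotient map with kernel $\cl J_n$. By \cite[Proposition 1.6]{farenick--paulsen2011}, $\theta\otimes\id:\bb{C}^{2n}\otimes_{\max}\cl S\to NC(n)\otimes_{\max}\cl S$ remains a complete quotient map. Combined with Proposition \ref{cqchar}, this gives that $u\gg 0$ in $NC(n)\otimes_{\max}\cl S$ if and only if $u$ has a strongly positive preimage in $\bb{C}^{2n}\otimes_{\max}\cl S$. Since $\bb{C}^{2n}$ is a commutative C*-algebra we have $\bb{C}^{2n}\otimes_{\max}\cl S=\bb{C}^{2n}\oc\cl S$, and by iterating Lemma \ref{l_dsu} this is canonically (unitally, completely order) isomorphic to $\bigoplus_{i=1}^{2n}\cl S$, whose strongly positive elements are tuples $(t_1,\dots,t_{2n})$ with each $t_i\gg 0$ in $\cl S$.

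Next I would unfold the equation $(\theta\otimes\id)\bigl(\sum_{i=1}^{2n}t_i\otimes e_i\bigr)=u$, i.e.\ $\sum_{i=1}^{2n}t_i\otimes\dot{e}_i=u$, in the basis of $NC(n)$ listed above. The key identity (coming from $\theta(e_{2k-1})+\theta(e_{2k})=(p_k+p_k^{\perp})/n=1/n$ for every $k$) is $\dot{e}_{2k-1}+\dot{e}_{2k}=\dot{e}_1+\dot{e}_2$, so that $\dot{e}_{2k}=\dot{e}_1+\dot{e}_2-\dot{e}_{2k-1}$ for $k\geq 2$. Substituting and comparing coefficients of the basis elements against the given expression for $u$ produces the linear system
\begin{align*}
t_1+\sum_{k=2}^{n}t_{2k}&=s_1,\\
t_2+\sum_{k=2}^{n}t_{2k}&=s_2,\\
t_{2k}-t_{2k-1}&=s_{2k-1}\qquad (k=2,\dots,n).
\end{align*}

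Finally I would set $x_{2k-1}:=t_{2k}$ for $k=2,\dots,n$ and observe that the system above, together with the requirement $t_i\gg 0$ for all $i$, is equivalent to: each $x_{2k-1}\gg 0$, $x_{2k-1}\gg s_{2k-1}$ (since $x_{2k-1}-s_{2k-1}=t_{2k-1}\gg 0$), and $s_1,s_2\gg x_3+x_5+\cdots+x_{2n-1}$ (since $s_j-\sum x_{2k-1}=t_j\gg 0$ for $j=1,2$). This is precisely a Riesz decomposition scheme in the sense of Definition \ref{d_rds}, and conversely any such scheme yields strongly positive $t_i$'s by these formulas. The main conceptual content is packaged into the first step — that $\theta\otimes\id$ is a complete quotient map for $\otimes_{\max}$ — while the rest is bookkeeping; the only point that requires some care is the consistent indexing of the basis $\{\dot{e}_1,\dot{e}_2,\dot{e}_3,\dot{e}_5,\dots,\dot{e}_{2n-1}\}$ and the correct use of the identity $\dot{e}_{2k-1}+\dot{e}_{2k}=\dot{e}_1+\dot{e}_2$.
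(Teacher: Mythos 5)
Your proof is correct and follows essentially the same route as the paper's: lift $u$ through the complete quotient map $\bb{C}^{2n}\otimes_{\max}\cl S\to NC(n)\otimes_{\max}\cl S$, identify strong positivity in $\bb{C}^{2n}\otimes_{\max}\cl S$ with coordinatewise strong positivity, and read off the Riesz decomposition scheme from the coefficients. The only cosmetic difference is that you parametrise the general lift by solving a linear system via the relations $\dot{e}_{2k-1}+\dot{e}_{2k}=\dot{e}_1+\dot{e}_2$, whereas the paper writes the lift as one fixed preimage plus an arbitrary element of $\cl J_n\otimes\cl S$; the resulting computations are identical.
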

\begin{proof}
Denote by $q$ the quotient map from $\bb{C}^{2n}$ onto $NC(n) = \bb{C}^{2n}/\cl J_n$ and
suppose that
$u \gg 0$ in $\cl S \otimes_{\max} (\bb C^{2n}/\cl J_n)$. 
By \cite[Proposition 1.6]{farenick--paulsen2011}, 
$\id \otimes q :\cl S \otimes_{\max} \bb C^{2n} \rightarrow \cl S \otimes_{\max} (\bb C^{2n}/\cl J_n)$
is a complete quotient map. Thus, by Proposition~\ref{cqchar} 
%there exists a real number $\delta > 0$ such that
$u$
% - \delta (e \otimes \dot{e})$ 
lifts to a strongly positive
element of $\cl S \otimes_{\max} \bb C^{2n}$. 
Since 
$$\dot{e} = (n,n,0,\dots,0) + \cl J_n = n \dot{e}_1 + n \dot{e}_2,$$
we have that
$$
u  = s_1  \otimes \dot{e}_1 + s_2  \otimes \dot{e}_2 - \sum_{i=1}^{n-1} s_{2i+1} \otimes \dot{e}_{2i+1}.
$$
Taking into account that
$$
\cl J_n = {\rm span}\{ e_1 + e_2 - e_{2i+1} - e_{2i+2}: \; i = 1,\dots,n-1\},
$$
we see that a strongly positive lifting of 
$u $ in $\cl S \otimes_{\max} \bb C^{2n}$ has the form
$$
s_1  \otimes e_1 + s_2  \otimes e_2 - \sum_{i=1}^{n-1} s_{2i+1} \otimes e_{2i+1}$$ 
$$ + \sum_{i = 1}^{n-1} (-x_{2i+1}) \otimes (e_1 + e_2 - e_{2i+1} - e_{2i+2}) 
$$
for some $x_3,x_5,\dots,x_{2n-1}$ in $\cl S$. 
Since 
$\cl S \otimes_{\max} \bb C^{2n} = \cl S \otimes_{\min} \bb C^{2n}$, 
we deduce
$$
\left. \begin{array}{r}
 s_1  - x_3 - \cdots - x_{2n-1} \gg 0 \\
 s_2  - x_3 - \cdots - x_{2n-1} \gg 0 \\
  - s_3 + x_3 \gg 0 \\
          x_3 \gg 0 \\
    - s_5 + x_5 \gg 0 \\
             x_5 \gg 0 \\
         \vdots \;\; \\
    - s_{2n-1} + x_{2n-1} \gg 0 \\
             x_{2n-1} \gg 0 \\    
\end{array} \right\} \Longrightarrow
\begin{array}{l}
 s_1 \gg  x_3 + \cdots + x_{2n-1} \\
 s_2 \gg  x_3 + \cdots + x_{2n-1} \\
 x_3,x_5,\dots,x_{2n-1} \gg 0 \\
 s_{2i+1} \ll x_{2i+1},\;\; i = 1,\dots,n-1.
\end{array}
$$
%Replacing each $x_{2i+1}$, $i=1,\dots,n-1$, with a small perturbation 
%by a positive multiple of the unit, we reach the conclusion in the proposition.
and we have obtained our decomposition.

Conversely, whenever $s_1,s_2,s_3,s_5,\dots,s_{2n-1}$ have 
the property that $s_1,s_2 \gg x_3+\cdots + x_{2n-1}$, $s_{2i+1} \ll x_{2i+1}$; $i=1,2,\dots,n-1$,
for some strongly positive elements $x_3,\dots,x_{2n-1}$, by reversing the argument
in the previous paragraph, we deduce that $u$ is strongly positive.
\end{proof}

\begin{proposition}\label{p_1R}
Let $\cl A$ and $\cl B$ be unital C*-algebras with $\cl A \subseteq \cl B$. 
Then $\cl A$ has the $1$-Riesz decomposition
property in $\cl B$. In other words, for any $a_1,a_2,a_3\in \cl A$, 
whenever there is $x$ strongly positive in $\cl B$ 
satisfying $a_1,a_2 \gg x$, $a_3 \ll x$ then this $x$ can be chosen to be a strongly positive element of $\cl A$.
\end{proposition}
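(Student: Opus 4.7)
\noindent\textit{Proof plan.} The plan is to encode the Riesz decomposition data as the strong positivity of a single element in an operator system tensor product, and then to invoke the $(\min,{\rm c})$-nuclearity of $NC(2)$ to transfer positivity between $\cl A$ and $\cl B$. Consider the element
\[
u \,=\, a_1 \otimes \dot{e}_1 + a_2 \otimes \dot{e}_2 - a_3 \otimes \dot{e}_3 \,\in\, \cl A \otimes NC(2) \,\subseteq\, \cl B \otimes NC(2).
\]
Applying Proposition~\ref{prop pos-crit} at $n=2$, first with $\cl S = \cl B$ and then with $\cl S = \cl A$: the hypothesis of Proposition~\ref{p_1R}---existence of a strongly positive $x \in \cl B$ with $a_1, a_2 \gg x \gg a_3$---is precisely the assertion $u \gg 0$ in $\cl B \otimes_{\max} NC(2)$, while the desired conclusion---existence of a strongly positive $y \in \cl A$ satisfying the same inequalities---is precisely $u \gg 0$ in $\cl A \otimes_{\max} NC(2)$.

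It thus suffices to verify that these two positivity statements for $u$ are equivalent. For this, I will recall from \cite[Theorem~6.3]{fkpt} (invoked in the proof of Corollary~\ref{c_nc2}) that $NC(2)$ is $(\min,{\rm c})$-nuclear. Combining this with the identity $\cl S \otimes_{\rm c} \cl D = \cl S \otimes_{\max} \cl D$, which holds for every operator system $\cl S$ and every unital C*-algebra $\cl D$, one obtains
\[
\cl A \otimes_{\max} NC(2) \,=\, \cl A \otimes_{\min} NC(2) \qquad \text{and} \qquad \cl B \otimes_{\max} NC(2) \,=\, \cl B \otimes_{\min} NC(2).
\]
The injectivity of the minimal tensor product then provides a complete order embedding $\cl A \otimes_{\min} NC(2) \coisubset \cl B \otimes_{\min} NC(2)$, so the element $u \in \cl A \otimes NC(2)$ is strongly positive in one tensor product if and only if it is strongly positive in the other. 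Chaining these equivalences completes the argument.

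There is no genuinely difficult step: all the substantive work is packaged inside Proposition~\ref{prop pos-crit} and the $(\min,{\rm c})$-nuclearity of $NC(2)$. The only point demanding care is the bookkeeping that identifies a Riesz decomposition scheme with the positivity of the explicit element $u$ built from $a_1, a_2, a_3$. As a bonus, running the same argument with $M_k(\cl A) \subseteq M_k(\cl B)$ in place of $\cl A \subseteq \cl B$ (both sides remain unital C*-algebras) yields the complete $1$-Riesz decomposition property at no extra cost.
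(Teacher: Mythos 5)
Your proposal is correct and follows essentially the same route as the paper's own proof: encode the hypothesis as strong positivity of $a_1\otimes\dot e_1 + a_2\otimes\dot e_2 - a_3\otimes\dot e_3$ in $\cl B\otimes_{\max}NC(2)$ via Proposition~\ref{prop pos-crit}, pass to the minimal tensor product using the nuclearity of $NC(2)$, restrict by injectivity of $\min$ to $\cl A\otimes_{\min}NC(2)$, and convert back. The only (cosmetic) difference is that you spell out the $\mathrm{c}=\max$ identification explicitly where the paper simply cites C*-nuclearity.
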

\begin{proof}
Suppose that 
$a_1,a_2,a_3\in \cl A$ and $x\in \cl B_+$ are such that
$a_1,a_2 \gg x$, $a_3 \ll x$.
By Proposition \ref{prop pos-crit}, 
$a_1 \otimes \dot{e}_1 + a_2 \otimes \dot{e}_2 - a_3 \otimes \dot{e}_3$ is strongly
positive in $\cl B \otimes_{\max} NC(2)$. 
By \cite[Proposition 6.3]{fkpt}, $NC(2)$ is C*-nuclear, and hence 
$a_1 \otimes \dot{e}_1 + a_2 \otimes \dot{e}_2 - a_3 \otimes \dot{e}_3$ is strongly
positive in $\cl B \otimes_{\min} NC(2)$.
The injectivity of the minimal tensor product now 
ensures that $a_1 \otimes \dot{e}_1 + a_2 \otimes \dot{e}_2 - a_3 \otimes \dot{e}_3$
is strongly positive in $\cl A \otimes_{\min} NC(2)$. 
Another application of 
the C*-nuclearity of $NC(2)$ and Proposition \ref{prop pos-crit}
establishes the claim.
\end{proof}

While the $n$-Riesz decomposition is automatically satisfied if $n = 1$, 
higher values of $n$ require, as shown in the following theorem, 
an additional assumption on $\cl A$.

\begin{theorem}\label{thm WEPRiesz}
For a unital C*-subalgebra  $\cl A \subseteq  \cl B(H)$, the following statements are equivalent:
\begin{enumerate}
 \item[(i)] $\cl A$ has WEP;
 \item[(ii)] $\cl A$ has the complete $2$-Riesz decomposition property in $\cl B(H)$;
 \item[(iii)] $\cl A$ has the complete $n$-Riesz decomposition property in $\cl B(H)$ for every $n\in \bb{N}$.
\end{enumerate}
\end{theorem}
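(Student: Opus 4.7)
The implication (iii) $\Rightarrow$ (ii) is immediate. The core of the argument will be to establish a dictionary between the complete $n$-Riesz decomposition property of $\cl A \subseteq \cl B(H)$ and the tensor equality $NC(n+1) \otimes_{\min} \cl A = NC(n+1) \otimes_{\max} \cl A$; once this dictionary is in place, both remaining implications will follow at once from Theorem \ref{p_eqmma}.

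To build the dictionary, the plan is to apply Proposition \ref{prop pos-crit} with $\cl S$ taken to be $M_k(\cl A)$ and $M_k(\cl B(H))$ in turn, using the canonical identification $M_k(\cl S \otimes_\tau NC(n+1)) \cong M_k(\cl S) \otimes_\tau NC(n+1)$ for $\tau \in \{\min,\max\}$. The existence in $M_k(\cl B(H))$ of strongly positive $x_i$'s completing given $a_1, a_2, b_1, \ldots, b_n \in M_k(\cl A)$ into a Riesz decomposition scheme is equivalent, by Proposition \ref{prop pos-crit}, to the strong positivity of the canonical element
$$
u = a_1 \otimes \dot{e}_1 + a_2 \otimes \dot{e}_2 - b_1 \otimes \dot{e}_3 - b_2 \otimes \dot{e}_5 - \cdots - b_n \otimes \dot{e}_{2n+1}
$$
in $M_k(\cl B(H)) \otimes_{\max} NC(n+1)$, and similarly with $\cl A$ in place of $\cl B(H)$ for the conclusion of the decomposition property. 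Since $\{\dot{e}_1, \dot{e}_2, \dot{e}_3, \dot{e}_5, \ldots, \dot{e}_{2n+1}\}$ is a basis of $NC(n+1)$, every self-adjoint element of $M_k(\cl A) \otimes NC(n+1)$ has this form (up to sign choices on the $b_i$). Next, by Corollary \ref{p_llp}, $M_k(\cl B(H)) \otimes_{\max} NC(n+1) = M_k(\cl B(H)) \otimes_{\min} NC(n+1)$, and by injectivity of $\min$ the inclusion $M_k(\cl A) \otimes_{\min} NC(n+1) \subseteq M_k(\cl B(H)) \otimes_{\min} NC(n+1)$ is a complete order embedding. Combining these facts, the complete $n$-Riesz decomposition property of $\cl A$ in $\cl B(H)$ translates into the assertion that every strongly positive element of $M_k(\cl A) \otimes_{\min} NC(n+1)$ is also strongly positive in $M_k(\cl A) \otimes_{\max} NC(n+1)$; a routine Archimedean perturbation then upgrades this to the tensor equality $NC(n+1) \otimes_{\min} \cl A = NC(n+1) \otimes_{\max} \cl A$.

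With the dictionary in hand, (ii) $\Rightarrow$ (i) will follow because the complete 2-Riesz decomposition property yields $NC(3) \otimes_{\min} \cl A = NC(3) \otimes_{\max} \cl A$, so that $\cl A$ has WEP by Theorem \ref{p_eqmma} applied with $n = 3$. For (i) $\Rightarrow$ (iii), WEP together with Theorem \ref{p_eqmma} gives the tensor equality for every $n + 1 \geq 3$, and hence the complete $n$-Riesz decomposition property for every $n \geq 2$; the residual case $n = 1$ will be handled by applying Proposition \ref{p_1R} to the pair $M_k(\cl A) \subseteq M_k(\cl B(H))$ for each $k$ (and is in fact unconditional, since $NC(2)$ is C$^*$-nuclear). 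The most delicate step I anticipate is the careful matrix-level formulation of Proposition \ref{prop pos-crit} together with the verification that the specific form of $u$ above exhausts all self-adjoint elements of $M_k(\cl A) \otimes NC(n+1)$; both are routine but need to be set up precisely to make the dictionary water-tight.
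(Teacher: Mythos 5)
Your proposal is correct and follows essentially the same route as the paper: the paper packages your ``dictionary'' as Proposition \ref{prop RD&T} (the complete $(n-1)$-Riesz decomposition property in $\cl B$ is equivalent to $\cl A\otimes_{\max}NC(n)\coisubset\cl B\otimes_{\max}NC(n)$, proved exactly via Proposition \ref{prop pos-crit} at the matrix level), and then concludes from Corollary \ref{p_llp}, the injectivity of $\min$, and Theorem \ref{p_eqmma} just as you do. The only difference is organisational — you inline the proof of that proposition rather than quoting it — and your separate treatment of the $n=1$ case via Proposition \ref{p_1R} matches the paper's remarks.
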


In contrast to the original definition of WEP given in \cite{Lance}, 
the characterisation given in Theorem \ref{thm WEPRiesz} 
only makes reference to a single concrete representation 
of $\cl A$. Moreover, we shall see below that 
$\cl B(H)$ can be replaced by an arbitrary C*-algebra having WEP. 
The proof of the theorem will be based on the following result:

\begin{proposition}\label{prop RD&T}
Let $\cl B$ be a unital C*-algebra and $\cl A\subseteq \cl B$ be a unital C*-subalgebra. 
The following are equivalent:
\begin{enumerate}
 \item[(i)] $\cl A$ has the (complete) $n-1$-Riesz decomposition property in $\cl B$;
 \item[(ii)] There is a canonical (complete) order embedding
$$
\cl A \otimes_{\max} NC(n) \subseteq_{{\rm coi}} \cl B \otimes_{\max} NC(n).
$$
\end{enumerate}
\end{proposition}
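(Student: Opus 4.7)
My plan is to derive both implications directly from Proposition \ref{prop pos-crit}, which converts strong positivity of a designated element of $\cl S\omax NC(n)$ into the existence of an $(n-1)$-Riesz decomposition scheme in $\cl S$. Two preparatory remarks are in order. First, since $\{\dot{e}_1,\dot{e}_2,\dot{e}_3,\dot{e}_5,\dots,\dot{e}_{2n-1}\}$ is a basis of $NC(n)$ consisting of positive elements, every self-adjoint element of $M_k(\cl A)\otimes NC(n)$ has a unique expansion
$$u \;=\; a_1\otimes\dot{e}_1 + a_2\otimes\dot{e}_2 - b_1\otimes\dot{e}_3 - b_2\otimes\dot{e}_5 - \cdots - b_{n-1}\otimes\dot{e}_{2n-1}$$
with $a_j,b_i\in M_k(\cl A)$ self-adjoint (the minus signs are absorbed to match the convention of Proposition \ref{prop pos-crit}). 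Second, I use the canonical identification $M_k(\cl S\omax NC(n)) = M_k(\cl S)\omax NC(n)$, which reduces the complete version to the non-complete version applied to $M_k(\cl A)\subseteq M_k(\cl B)$; so I will argue directly at the $M_k$-level.

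For (ii)$\Rightarrow$(i), I begin with a Riesz decomposition scheme $(a_1,a_2,b_1,\dots,b_{n-1},x_1,\dots,x_{n-1})$ in which $a_j,b_i\in M_k(\cl A)$ and the $x_i\in M_k(\cl B)$ are strongly positive, and form the associated element $u$ as above. By Proposition \ref{prop pos-crit} applied with $\cl S = M_k(\cl B)$, $u$ is strongly positive in $M_k(\cl B)\omax NC(n)$. Since $u\in M_k(\cl A)\otimes NC(n)$ and the inclusion is a complete order embedding by hypothesis, $u$ is strongly positive in $M_k(\cl A)\omax NC(n)$. A second application of Proposition \ref{prop pos-crit}, this time with $\cl S = M_k(\cl A)$, produces strongly positive $y_1,\dots,y_{n-1}\in M_k(\cl A)$ completing a Riesz decomposition scheme with the same $a_j, b_i$.

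For (i)$\Rightarrow$(ii), the inclusion $\iota:\cl A\omax NC(n)\to \cl B\omax NC(n)$ is completely positive by functoriality of $\max$, so I only need to show that it reflects positivity at each matrix level. Let $u\in M_k(\cl A)\otimes NC(n)$ be positive in $M_k(\cl B)\omax NC(n)$. For any $\epsilon>0$, $u+\epsilon 1$ is strongly positive there; expanding it in the basis above and invoking Proposition \ref{prop pos-crit} yields strongly positive witnesses $x_i\in M_k(\cl B)$ forming a Riesz decomposition scheme with the coefficients of $u+\epsilon 1$. The complete $(n-1)$-Riesz decomposition property of $\cl A$ in $\cl B$ then replaces the $x_i$ by strongly positive $y_i\in M_k(\cl A)$ in another Riesz decomposition scheme, and a final application of Proposition \ref{prop pos-crit} with $\cl S = M_k(\cl A)$ certifies that $u+\epsilon 1$ is strongly positive in $M_k(\cl A)\omax NC(n)$. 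Letting $\epsilon\to 0^+$ and using that the positive cone is Archimedean closed yields $u\geq 0$ there.

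I do not anticipate a serious obstacle: the argument is essentially two applications of Proposition \ref{prop pos-crit} sandwiching the hypothesis. The only mildly technical point is the passage between strong positivity, which is what Proposition \ref{prop pos-crit} directly characterises, and ordinary positivity, handled by the standard $\epsilon 1$ perturbation and the Archimedeanisation of the cone in $\omax$.
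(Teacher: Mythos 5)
Your argument is correct and follows essentially the same route as the paper: both directions are obtained by two applications of Proposition \ref{prop pos-crit} sandwiching the hypothesis, the complete case is reduced to the matrix level via $M_k(\cl C\otimes_{\max}\cl T)=M_k(\cl C)\otimes_{\max}\cl T$, and the passage from strong positivity of perturbed elements to the full cone inclusion is handled by the closedness (Archimedean property) of the cones, exactly as in the paper's remark that the cones are closed in the order norm.
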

\begin{proof}
We first skip ``complete'' and prove the equivalence of (i) and (ii). 
Since $M_k(\cl A)$ is a unital C*-subalgebra of $M_k(\cl B)$
and 
$$M_k(\cl C \otimes_{\max} \cl T) = M_k(\cl C) \otimes_{\max} \cl T$$ canonically 
for every C*-algebra $\cl C$ and any operator system $\cl T$,
the equivalence of the statements with the term \lq\lq complete'' added 
will be automatically satisfied.

We identify $NC(n)$ with $\bb C^{2n}/\cl J_{n}$ and fix the basis $\{y_1,y_2,x_1,\dots,x_{n-1}\}$ where
$y_1 = \dot{e}_1,$ $ y_2= \dot{e}_2$ and $x_{i} = \dot{e}_{2i+1}$, $i = 1,\dots,n-1$.

(i)$\Rightarrow$(ii) 
We need to prove the following: if an element
$$
u = a_1 \otimes y_1 + a_2 \otimes y_2 + \sum_{i = 1}^{n-1} c_i \otimes x_i,
$$
where $a_1,a_2,c_1,\dots,c_{n-1} \in \cl A$, is strongly positive in $\cl B \otimes_{\max} NC(n)$ then it
is also strongly positive in $\cl A \otimes_{\max} NC(n)$. Since cones are closed with respect to the order norm,
the desired embedding will then be automatically satisfied. Proposition \ref{prop pos-crit} implies that there exist
positive elements $z_1,\dots,z_{n-1}$ in $\cl B$ such that
$(a_1,a_2,c_1,\dots,c_{n-1},z_1,\dots,z_{n-1})$ is a Riesz decomposition scheme.
Using (i), we conclude that $z_1,\dots,z_{n-1}$ can be chosen from $\cl A$. Finally Proposition \ref{prop pos-crit} implies that
that $u$ is strongly positive in $\cl A \otimes_{\max} NC(n)$.

(ii)$\Rightarrow$(i) Suppose that $a_1,a_2,c_1,\dots,c_{n-1} \in \cl A$ and
$z_1,\dots,z_{n-1}$ are strongly positive in $\cl B$ 
%\marginpar{\tiny $\cl B_+$ instead of $\cl B$}
are such that 
the tuple $(a_1,a_2,c_1,\dots,c_{n-1},z_1,\dots,z_{n-1})$ is a Riesz decomposition scheme.
Proposition \ref{prop pos-crit} implies that the element
$u = a_1 \otimes y_1 + a_2 \otimes y_2 - \sum_{i=1}^{n-1} c_i \otimes x_i$
is strongly positive in $\cl B \otimes_{\max} NC(n)$. 
By assumption,
$u$ is strongly positive in $\cl A \otimes_{\max} NC(n)$. 
Now, using Proposition \ref{prop pos-crit}
once again, 
it is easy to see that a Riesz decomposition scheme exists all of whose entries 
belong to $\cl A$.
\end{proof}

\begin{proof}[Proof of Theorem \ref{thm WEPRiesz}] 
By Proposition \ref{prop RD&T}, it suffices to prove
that $\cl A$ has WEP if and only if
$$
\cl A \otimes_{\max} NC(n) \subseteq \cl B(H) \otimes_{\max} NC(n)  
$$
completely order isomorphically for all $n$, and, equivalently for $n=3$. 
This follows from Corollary \ref{p_llp} and 
Theorem \ref{p_eqmma}.
\end{proof}

\begin{remark}
In Theorem \ref{thm WEPRiesz}, $\cl B(H)$ 
can be replaced by any injective C*-algebra containing 
$\cl A$ and, in particular, with the injective envelope $I(\cl A)$ of $\cl A$. 
Thus, for a
unital C*-algebra $\cl A$ the following are equivalent:
\begin{enumerate}
 \item[(i)] $\cl A$ has WEP;
 \item[(ii)] $\cl A$ has the complete $2$-Riesz decomposition property in $I(\cl A)$;
 \item[(iii)] $\cl A$ has the complete $n$-Riesz decomposition property in $I(\cl A)$ 
 for every $n\in \bb{N}$.
\end{enumerate}
The proof is identical to that of Theorem \ref{thm WEPRiesz} 
after noting that the minimal and the maximal tensor products of 
non-commutative cubes with any injective C*-algebra coincide. 
\end{remark}

\begin{corollary}\label{c_ab}
Let $\cl B$ be a unital C*-algebra and $\cl A\subseteq \cl B$
be a unital C*-subalgebra. 
Suppose that $\cl B$ has WEP. Then $\cl A$
has WEP if and only if it has the complete $2$-Riesz decomposition property in $\cl B$.
\end{corollary}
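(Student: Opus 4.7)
The plan is to reduce the statement to the $NC(3)$-tensor-product reformulation of WEP given by Theorem~\ref{p_eqmma}, using Proposition~\ref{prop RD&T} as the bridge between the Riesz decomposition property and a complete order embedding. The key observation is that WEP of $\cl B$ collapses the two operator system structures on $\cl B \otimes NC(3)$, which, combined with injectivity of $\min$, trivialises the comparison.

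By Proposition~\ref{prop RD&T} applied with $n=3$, the condition that $\cl A$ has the complete $2$-Riesz decomposition property in $\cl B$ is equivalent to
$$\cl A \otimes_{\max} NC(3) \,\coisubset\, \cl B \otimes_{\max} NC(3).$$
Since $\cl B$ has WEP, Theorem~\ref{p_eqmma} gives $\cl B \otimes_{\min} NC(3) = \cl B \otimes_{\max} NC(3)$, while the injectivity of $\min$ always yields $\cl A \otimes_{\min} NC(3) \coisubset \cl B \otimes_{\min} NC(3)$.

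For the forward direction, I would assume $\cl A$ has WEP, so that $\cl A \otimes_{\min} NC(3) = \cl A \otimes_{\max} NC(3)$ by Theorem~\ref{p_eqmma}; composing with the $\min$-inclusion into $\cl B$ and using the tensor identity for $\cl B$ immediately produces the desired complete order embedding $\cl A \otimes_{\max} NC(3) \coisubset \cl B \otimes_{\max} NC(3)$, whence the $2$-Riesz decomposition property through Proposition~\ref{prop RD&T}. For the converse, assume the $2$-Riesz decomposition property. Then both $\cl A \otimes_{\min} NC(3)$ and $\cl A \otimes_{\max} NC(3)$ sit as complete order embeddings inside $\cl B \otimes_{\min} NC(3) = \cl B \otimes_{\max} NC(3)$ with the same underlying linear image; hence the two operator system structures on $\cl A \otimes NC(3)$ coincide. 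A final appeal to Theorem~\ref{p_eqmma} then gives WEP for $\cl A$.

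I do not anticipate a genuine obstacle; the corollary is essentially a packaging of Proposition~\ref{prop RD&T}, Theorem~\ref{p_eqmma}, and injectivity of $\min$. The only mild point to be careful about is ensuring that the two inclusions of $\cl A \otimes NC(3)$ into $\cl B \otimes_{\min} NC(3) = \cl B \otimes_{\max} NC(3)$ are really the same map at the level of the algebraic tensor product, so that equality of the induced operator system structures on the image forces equality of the structures on $\cl A \otimes NC(3)$ itself; this is immediate from functoriality.
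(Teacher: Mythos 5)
Your proof is correct and follows essentially the same route as the paper: translate the complete $2$-Riesz decomposition property into the complete order embedding $\cl A \otimes_{\max} NC(3) \coisubset \cl B \otimes_{\max} NC(3)$ via Proposition~\ref{prop RD&T} with $n=3$, use WEP of $\cl B$ together with Theorem~\ref{p_eqmma} to identify $\cl B \otimes_{\min} NC(3)$ with $\cl B \otimes_{\max} NC(3)$, and then observe via injectivity of $\min$ that the embedding is completely order isomorphic exactly when $\cl A \otimes_{\min} NC(3) = \cl A \otimes_{\max} NC(3)$, which by Theorem~\ref{p_eqmma} is WEP for $\cl A$. The only difference is that you spell out both implications that the paper compresses into a single sentence.
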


\begin{proof}
By Proposition \ref{prop RD&T}, 
it suffices to prove that $\cl A$ has WEP if and only if 
$$
\cl A \otimes_{\max} NC(3) \subseteq_{\rm coi} \cl B \otimes_{\max} NC(3).
$$
Since $\cl B$ has WEP, Theorem \ref{p_eqmma} implies that
$\cl B \otimes_{\min} NC(3) = \cl B \otimes_{\max} NC(3)$.
Therefore,
the embedding of $\cl A \otimes_{\max} NC(3)$ into $ \cl B \otimes_{\max} NC(3)$ being a complete order inclusion is equivalent to the statement
that the minimal and and the maximal tensor products of $\cl A$ with $NC(3)$ coincides. Thus the result
follows from Theorem \ref{p_eqmma}.
\end{proof}

\begin{corollary}  
(i) \ Every unital C*-algebra $\cl A$ has the $n$-Riesz decomposition property in 
its bidual $\cl A^{**}$, for every $n\in \bb{N}$.

(ii) If $\cl A$ has WEP, then 
$\cl A$ has the complete
$n$-Riesz decomposition in $\cl B$ for every 
C*-algebra $\cl B$ containing $\cl A$ and every
$n\in \bb{N}$.
\end{corollary}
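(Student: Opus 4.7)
The plan is to apply Proposition~\ref{prop RD&T} to both parts, which reduces the $n$-Riesz decomposition property to a statement about a (complete) order embedding of the form
$\cl A \omax NC(n+1) \subseteq \cl B \omax NC(n+1).$
Each part then becomes a diagram chase among the three operator system tensor products established in Section~\ref{s_prel}.

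For part (ii), I would use the WEP hypothesis on $\cl A$ to obtain
$\cl A \omax NC(k) = \cl A \omin NC(k)$
for every $k \geq 2$: the case $k \geq 3$ is Theorem~\ref{p_eqmma}, while $k = 2$ is handled by the $(\min,\max)$-nuclearity of $NC(2)$ invoked in the proof of Corollary~\ref{c_nc2}. Combined with the injectivity of $\min$, which gives $\cl A \omin NC(k) \coisubset \cl B \omin NC(k)$, and the fact that the identity $\cl B \omax NC(k) \to \cl B \omin NC(k)$ is completely positive, a chase at each matrix level shows that an element of $\cl A \otimes NC(k)$ positive in $\cl B \omax NC(k)$ is positive in $\cl B \omin NC(k)$, hence in $\cl A \omin NC(k) = \cl A \omax NC(k)$. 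Proposition~\ref{prop RD&T} then delivers the complete $n$-Riesz decomposition property of $\cl A$ in $\cl B$.

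For part (i), no WEP hypothesis is available on $\cl A^{**}$, so I would prove the required order embedding $\cl A \omax NC(k) \hookrightarrow \cl A^{**} \omax NC(k)$ by extending states directly. Given a state $\omega$ on $\cl A \omax NC(k)$, the GNS construction produces unital completely positive maps $\phi : \cl A \to \cl B(H)$ and $\psi : NC(k) \to \cl B(H)$ with commuting ranges and a cyclic vector $\xi$ with $\omega(a \otimes s) = \langle \phi(a)\psi(s)\xi,\xi\rangle$. The map $\phi$ extends uniquely to a weak-$*$ continuous unital completely positive map $\tilde\phi : \cl A^{**} \to \cl B(H)$, whose range lies in the weak-$*$ closure of $\phi(\cl A)$. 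Since the commutant of each individual element of $\psi(NC(k))$ is weak-$*$ closed in $\cl B(H)$, the ranges of $\tilde\phi$ and $\psi$ still commute, so that $(\tilde\phi,\psi)$ induces a unital completely positive map on $\cl A^{**} \omax NC(k)$; composing with the vector state gives a state on $\cl A^{**} \omax NC(k)$ extending $\omega$. A standard Archimedean separation argument then shows that any element of $\cl A \otimes NC(k)$ positive in $\cl A^{**} \omax NC(k)$ is already positive in $\cl A \omax NC(k)$. Proposition~\ref{prop RD&T} concludes (i); the case $n=1$ is in any event covered by Proposition~\ref{p_1R}.

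The main obstacle is the state-extension step in (i). Two delicate points must be verified: that the weak-$*$ continuous extension of a unital completely positive map on $\cl A$ takes values in the weak-$*$ closure of the original range, and that commutation of ranges survives this closure. Both reduce to classical facts---the weak-$*$ density of $\cl A$ in $\cl A^{**}$ and the weak-$*$ closedness of the commutant of any single operator---but they are the key ingredients without which the rest of the bookkeeping between the three tensor products would not go through.
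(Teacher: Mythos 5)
Your argument is correct and follows the paper's route: both parts are reduced via Proposition \ref{prop RD&T} to the order embedding $\cl A\otimes_{\max}NC(n+1)\coisubset \cl B\otimes_{\max}NC(n+1)$, and for (ii) your chase (WEP $\Rightarrow$ $\min=\max$ for $\cl A$ with $NC(k)$ via Theorem \ref{p_eqmma} and the nuclearity of $NC(2)$, plus injectivity of $\min$) is exactly the paper's ``standard diagramme chase.'' The only divergence is in (i), where the paper simply cites \cite[Lemma 6.5]{kavruk--paulsen--todorov--tomforde2010} for the embedding $\cl S\otimes_{\max}\cl A\coisubset \cl S\otimes_{\max}\cl A^{**}$, while you reprove that lemma from scratch by extending the $\cl A$-leg of a commuting pair to a normal unital completely positive map on $\cl A^{**}$ and using weak-$*$ closedness of commutants; your self-contained argument is sound and is essentially the standard proof of the cited result.
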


\begin{proof}
(i) This is a direct consequence of \cite[Lemma 6.5]{kavruk--paulsen--todorov--tomforde2010} and Proposition \ref{prop RD&T}. 

(ii) Fix a C*-algebra $\cl B$ with $\cl A \subseteq \cl B$ and $n\geq 2$.
By Theorem \ref{p_eqmma}, 
$\cl A \otimes_{\min} NC(n) = \cl A \otimes_{\max} NC(n)$.
A standard diagramme chase now shows that 
$\cl A \otimes_{\max} NC(n) \subseteq_{\rm coi} \cl B \otimes_{\max} NC(n)$.
Proposition \ref{prop RD&T} shows that 
$\cl A$ has the complete $n-1$-Riesz decomposition property in $\cl B$.
\end{proof}

We will next formulate the Connes Embedding Problem in terms of
the Riesz decomposition property.
Since $C^*(\bb F_2)$ is a residually finite dimensional (for brevity, RFD)
C*-algebra  \cite{choi1980}, there is a C*-algebraic embedding
$$
C^*(\bb F_2) \hookrightarrow \prod_{k=1}^{\infty} M_{n(k)},
$$
for some sequence $(n(k))_{k\in \bb{N}}$ of natural numbers.

\begin{theorem}
Connes' embedding problem has an affirmative solution
if and only if $C^*(\bb F_2)$ has the complete 2-Riesz decomposition
property in the C$^*$-algebra $\prod_{k=1}^{\infty} M_{n(k)}$.
\end{theorem}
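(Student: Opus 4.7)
The plan is to combine three ingredients from the paper: the equivalence between Connes' embedding problem and WEP for $C^*(\bb F_2)$, the fact that $\prod_k M_{n(k)}$ is injective (and hence has WEP), and Corollary~\ref{c_ab}, which characterises WEP of a subalgebra of a WEP C*-algebra in terms of the complete $2$-Riesz decomposition property. The RFD embedding of $C^*(\bb F_2)$ into $\prod_k M_{n(k)}$ makes this a direct application.

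First, I would reduce Connes' embedding problem to a statement about $C^*(\bb F_2)$. By Kirchberg's theorem, Connes' embedding problem is equivalent to Kirchberg's Conjecture, i.e., to $C^*(\bb F_\infty)$ having WEP. Since $\bb F_2$ contains $\bb F_\infty$ and $C^*(\bb F_2)$ has the lifting property (Example \ref{lp groups}), Proposition \ref{keq} applied to $G = \bb F_2$ yields that Kirchberg's Conjecture is equivalent to $C^*(\bb F_2)$ having WEP.

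Second, I would verify that $\cl B := \prod_{k=1}^\infty M_{n(k)}$ has WEP. Indeed, $\cl B$ is an $\ell^\infty$-product of finite-dimensional matrix algebras and is therefore an atomic type I von Neumann algebra; such algebras are injective, and injective C*-algebras trivially have WEP (the identity map is its own weak expectation).

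Third, and finally, I would apply Corollary~\ref{c_ab} to the inclusion $C^*(\bb F_2) \subseteq \prod_{k=1}^\infty M_{n(k)}$ (which exists because $C^*(\bb F_2)$ is RFD by Choi's theorem). Since the ambient algebra has WEP, Corollary~\ref{c_ab} tells us $C^*(\bb F_2)$ has WEP if and only if it has the complete $2$-Riesz decomposition property in $\prod_k M_{n(k)}$. Combined with the first step, this is precisely the equivalence claimed. There is no substantial obstacle here: the only subtle point is invoking the injectivity of the product algebra, which is standard; everything else is a concatenation of results already established in the paper.
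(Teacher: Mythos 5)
Your proposal is correct and follows essentially the same route as the paper, which simply cites Corollary~\ref{c_ab} together with the fact that $\prod_{k=1}^{\infty} M_{n(k)}$ has WEP (equivalently, is injective). The extra detail you supply --- reducing Connes' embedding problem to WEP for $C^*(\bb F_2)$ via Proposition~\ref{keq} and checking that the RFD embedding is unital --- is exactly what the paper leaves implicit.
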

\begin{proof} 
The claim follows from Corollary \ref{c_ab} and the fact that 
$\prod_{k=1}^{\infty} M_{n(k)}$ has WEP, or by using Remark 6.6 and the fact that this algebra is injective.
\end{proof}

We will finish this section with a comparison of the 
Riesz decomposition and the Riesz interpolation properties.
Recall that a unital C*-subalgebra $\cl A$ of a C*-algebra $\cl B$ is said to have 
the \textit{relative $(k,m)$-tight
Riesz interpolation property in $\cl B$} (for brevity, the \emph{${\rm TR}(k,m)$-property in $\cl B$})
\cite{kavruk2012} if, whenever
$a_1,\dots,a_m, b_1,\dots,b_k \in\cl A$ are selfadjoint elements,
the existence of a selfadjoint element $x\in \cl B$ satisfying
$$
a_1,\dots,a_m \gg x \gg b_1,\dots,b_k
$$
implies the existence of a selfadjoint element $y\in \cl A$ 
such that $a_1,\dots,a_m \gg y \gg b_1,\dots,b_k$. Likewise,
we say that $\cl A$ has the \emph{complete TR$(k,m)$-property in $\cl B$} if $M_n(\cl A)$ has 
the TR$(k,m)$-property in $M_n(\cl B)$ for every $n\in \bb{N}$.

\begin{theorem}
The following are equivalent, for a unital C*-subalgebra $\cl A\subseteq \cl B(H)$:
\begin{enumerate}
\item[(i)] $\cl A$ has the complete $2$-Riesz decomposition property in $\cl B(H)$;
\item[(ii)] $\cl A$ has the complete ${\rm TR}(2,3)$-property in $\cl B(H)$;
\item[(iii)] $\cl A$ has the complete ${\rm TR}(k,m)$-property in 
$\cl B(H)$ for every $k$ and $m$;
\item[(iv)] $\cl A$ has WEP.
\end{enumerate}
\end{theorem}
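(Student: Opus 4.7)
The equivalence (i) $\Leftrightarrow$ (iv) is Theorem \ref{thm WEPRiesz}, and (iii) $\Rightarrow$ (ii) is immediate on specialising $(k,m) = (2,3)$. To close the loop I would prove (iv) $\Rightarrow$ (iii) and (ii) $\Rightarrow$ (iv) by adapting the tensor-product translation of Proposition \ref{prop RD&T} to the interpolation setting, in the spirit of the WEP characterisations obtained in \cite{kavruk2012}.

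Concretely, for each pair $(k,m)$ one associates a finite-dimensional operator system $\cl V_{k,m}$, realised as a quotient of the commutative C*-algebra $\bb{C}^{k+m}$ by the null subspace which encodes the existence of a common interpolant $x$ satisfying $a_i \gg x \gg b_j$. A positivity criterion analogous to Proposition \ref{prop pos-crit} should then show that an element
$$
u \;=\; \sum_{i=1}^{m} a_i \otimes y_i \;-\; \sum_{j=1}^{k} b_j \otimes z_j \;\in\; \cl C \otimes_{\max} \cl V_{k,m}
$$
is strongly positive precisely when a tight Riesz interpolant $x \in \cl C$ exists. The same diagram chase as in Proposition \ref{prop RD&T}, using functoriality of $\max$ and that quotients by kernels dualise to inclusions, then identifies the complete TR$(k,m)$-property of $\cl A$ in $\cl B(H)$ with the coi-embedding
$$
\cl A \otimes_{\max} \cl V_{k,m} \;\coisubset\; \cl B(H) \otimes_{\max} \cl V_{k,m}.
$$

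With this translation in hand, (iv) $\Rightarrow$ (iii) is clean: since $\cl V_{k,m}$ is a quotient of a commutative (hence nuclear) C*-algebra by a null subspace, it has the lifting property by \cite[Theorem 6.9]{kavruk2011} (compare the argument in Corollary \ref{p_llp}); so WEP of $\cl A$ together with Kirchberg's criterion \cite[Proposition 1.1(i)]{kirchberg1993} gives $\cl A \otimes_{\min} \cl V_{k,m} = \cl A \otimes_{\max} \cl V_{k,m}$, and injectivity of $\min$ produces the coi-embedding into $\cl B(H) \otimes_{\max} \cl V_{k,m}$.

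The main obstacle is (ii) $\Rightarrow$ (iv): extracting WEP from the tensor identity for the single operator system $\cl V_{2,3}$. The strategy I would pursue is to identify $\cl V_{2,3}$ with, or to realise it as containing coi a suitable quotient of, the noncommutative cube $NC(3)$, so that the assumed coi-embedding between $\cl A \otimes_{\max} \cl V_{2,3}$ and $\cl B(H) \otimes_{\max} \cl V_{2,3}$ transfers to the identity $NC(3) \otimes_{\max} \cl A = NC(3) \otimes_{\min} \cl A$; Theorem \ref{p_eqmma} then yields WEP. Pinpointing this identification — playing, in the TR setting, the role that Theorem \ref{th_anoq} plays for the Riesz decomposition — is the technical heart of the argument, and the reason that the specific pair $(2,3)$ (rather than, say, $(1,m)$, which would only give Proposition \ref{p_1R}-type information) is the minimal one that detects WEP.
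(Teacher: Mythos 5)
Your handling of (i)$\Leftrightarrow$(iv) via Theorem \ref{thm WEPRiesz} and of (iii)$\Rightarrow$(ii) is correct, and your general reduction framework is sound: realising the ${\rm TR}(k,m)$-property as a complete order embedding $\cl A\otimes_{\max}\cl V_{k,m}\coisubset\cl B(H)\otimes_{\max}\cl V_{k,m}$ for a quotient $\cl V_{k,m}$ of $\bb{C}^{k+m}$ by a one-dimensional null subspace is exactly parallel to Propositions \ref{prop pos-crit} and \ref{prop RD&T}, and it is in fact the strategy of the reference the paper relies on. For the record, the paper does not prove the interpolation half at all: its proof is a one-line citation of Theorem \ref{thm WEPRiesz} together with \cite[Theorem 7.4]{kavruk2012}, which is precisely the statement (ii)$\Leftrightarrow$(iii)$\Leftrightarrow$(iv) that you are attempting to reconstruct.

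The genuine gap is in (ii)$\Rightarrow$(iv). You reduce it to showing that the four-dimensional operator system $\cl V_{2,3}$ detects WEP, i.e.\ that $\cl A\otimes_{\min}\cl V_{2,3}=\cl A\otimes_{\max}\cl V_{2,3}$ forces WEP, and you propose to do this by identifying $\cl V_{2,3}$ with (or relating it to) $NC(3)$ --- but you then explicitly defer that identification as ``the technical heart of the argument'' without supplying it or verifying that such an identification exists. This is not a routine omission: it is the only non-formal step in the whole equivalence, it is where the specific pair $(2,3)$ enters, and without it nothing connects the hypothesis (ii) to Theorem \ref{p_eqmma}. As written, the implication (ii)$\Rightarrow$(iv) is therefore not established; you would either need to carry out the identification in detail or simply cite \cite[Theorem 7.4]{kavruk2012} as the paper does. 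A secondary, fixable point: in (iv)$\Rightarrow$(iii) you invoke \cite[Proposition 1.1(i)]{kirchberg1993}, which concerns a pair of C*-algebras with LLP and WEP, whereas $\cl V_{k,m}$ is only an operator system; you need the operator system analogue (the OSLLP/DCEP machinery of \cite{kavruk--paulsen--todorov--tomforde2010}, as used in the proof of Corollary \ref{p_llp} and Theorem \ref{thm WEPRiesz}) to conclude $\cl A\otimes_{\min}\cl V_{k,m}=\cl A\otimes_{\max}\cl V_{k,m}$.
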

\begin{proof}
The assertion is a direct consequence of 
Theorem \ref{thm WEPRiesz} and \cite[Theorem 7.4]{kavruk2012}.
\end{proof}

\end{document}